\title[Convergence of a scheme for a thin film equation]{Convergence of a fully discrete variational scheme for a thin-film equation}
\author{Horst Osberger}
\address{Horst Osberger \\ Zentrum Mathematik \\ TU M\"unchen \\ Boltzmannstr. 3 \\ D-85748 Garching \\ Germany}
\email{osberger@ma.tum.de}
\author{Daniel Matthes}
\address{Daniel Matthes \\ Zentrum Mathematik \\ TU M\"unchen \\ Boltzmannstr. 3 \\ D-85748 Garching \\ Germany}
\email{matthes@ma.tum.de}
\begin{document}
\raggedbottom																						
\newcommand{\setR}{\mathbb{R}}
\newcommand{\setRpp}{\mathbb{R}_{>0}}
\newcommand{\setRp}{\mathbb{R}_{\geq0}}
\newcommand{\setN}{\mathbb{N}}
\newcommand{\dd}{\,\mathrm{d}}
\newcommand{\dn}{\mathrm{d}}
\newcommand{\indy}{\mathbb{I}}
\newcommand{\eins}{\mathds{1}}
\newcommand{\Omegac}{\overline{\Omega}}

\newcommand{\theX}{\mathrm{X}}
\newcommand{\theY}{\mathrm{Y}}
\newcommand{\theZ}{\mathrm{Z}}
\newcommand{\theU}{\mathrm{U}}
\newcommand{\xvec}{\vec{\mathrm x}}
\newcommand{\yvec}{\vec{\mathrm y}}
\newcommand{\zvec}{\vec{\mathrm z}}
\newcommand{\vvec}{\vec{\mathrm v}}
\newcommand{\wvec}{\vec{\mathrm w}}
\newcommand{\wspr}[2]{\left\langle#1,#2\right\rangle_\delta}
\newcommand{\wnrm}[1]{\left\|#1\right\|_\delta}
\newcommand{\rvec}{\mathrm \rho'}
\newcommand{\rd}{\rho'}
\newcommand{\rdd}{\rho''}
\newcommand{\rddd}{\rho'''}
\newcommand{\rkap}{B}
\newcommand{\gvec}{\vec{g}}
\newcommand{\s}{{\widetilde{\xi}}_k}
\newcommand{\Nt}{{N_\tau}}

\newcommand{\convf}{\mathbf{u}}
\newcommand{\cf}{\mathbf{u}}
\newcommand{\cfM}{\mathbf{w}}
\newcommand{\cz}{\mathbf{z}}
\newcommand{\cX}{\mathbf{X}}
\newcommand{\cXd}{\mathbf{X}}

\newcommand{\theh}{\delta} 
\newcommand{\thep}{\delta} 
\newcommand{\dm}{{\overline{\delta}}}
\newcommand{\xspc}{\mathfrak{X}}
\newcommand{\xseq}{\mathfrak{x}}
\newcommand{\xseqN}{\xseq_\delta}
\newcommand{\zseq}{\mathfrak{z}}
\newcommand{\dens}{\mathcal{P}_2(\Omega)}
\newcommand{\densN}{\mathcal{P}_\delta(\Omega)}
\newcommand{\Lloc}{L_{\operatorname{loc}}}

\newcommand{\tv}[1]{{\mathrm{TV}}\left[#1\right]}
\newcommand{\ti}[1]{\left\lbrace#1\right\rbrace_{\tau}}
\newcommand{\tti}[1]{\left\langle#1\right\rangle_{\tau}}

\newcommand{\Wmat}{\mathrm{W}}
\newcommand{\Wmatn}{\widetilde{\mathrm{W}}}

\newcommand{\tn}{n\tau}
\newcommand{\tnm}{(n-1)\tau}
\newcommand{\ed}{\eta'}

\newcommand{\Ai}{A_1^n}
\newcommand{\Aii}{A_2^n}
\newcommand{\Aiii}{A_3^n}
\newcommand{\Aiv}{A_4^n}
\newcommand{\Av}{A_5^n}
\newcommand{\Avi}{A_6^n}
\newcommand{\Avii}{A_7^n}
\newcommand{\Aio}{A_1}
\newcommand{\Aiio}{A_2}
\newcommand{\Aiiio}{A_3}
\newcommand{\Aivo}{A_4}
\newcommand{\Avo}{A_5}
\newcommand{\Avio}{A_6}
\newcommand{\Aviio}{A_7}
\newcommand{\Ais}{A_i}
\newcommand{\Ris}{R_i}
\newcommand{\Ri}{R_1}
\newcommand{\Rii}{R_2}
\newcommand{\Riii}{R_3}
\newcommand{\Riv}{R_4}
\newcommand{\Rv}{R_5}
\newcommand{\Rvi}{R_6}
\newcommand{\Rvii}{R_7}
\newcommand{\Rviii}{R_8}
\newcommand{\er}{\mathrm{e}}
\newcommand{\Res}{R}
\newcommand{\Resn}{R^n}

\newcommand{\ival}{{\mathbb{I}_K}}
\newcommand{\ivalp}{{\mathbb{I}_K^+}}
\newcommand{\hval}{{\mathbb{I}^{1/2}_{K}}}
\newcommand{\aval}{{\mathbb{I}_K}}
\newcommand{\kmh}{{k-\frac{1}{2}}}
\newcommand{\kph}{{k+\frac{1}{2}}}
\newcommand{\kpd}{{k+\frac{3}{2}}}
\newcommand{\kmd}{{k-\frac{3}{2}}}
\newcommand{\jmh}{{j-\frac{1}{2}}}
\newcommand{\jph}{{j+\frac{1}{2}}}
\newcommand{\jmd}{{j-\frac{3}{2}}}
\newcommand{\kpmh}{{k\pm\frac{1}{2}}}
\newcommand{\kmph}{{k\mp\frac{1}{2}}}
\newcommand{\kappm}{{\kappa-\frac12}}
\newcommand{\kappp}{{\kappa+\frac12}}

\newcommand{\Kmh}{{K-\frac{1}{2}}}
\newcommand{\Kph}{{K+\frac{1}{2}}}
\newcommand{\imh}{{\frac{1}{2}}}
\newcommand{\iph}{{\frac{3}{2}}}
\newcommand{\Kmd}{{K-\frac{3}{2}}}

\newcommand{\thegrad}{\operatorname{grad}_{\mathbb W}}
\newcommand{\eps}{{\varepsilon}}
\newcommand{\wass}{\mathcal{W}_2}
\newcommand{\wassN}{\mathbf{W}_2}
\newcommand{\grad}{\partial_{\xvec}}
\newcommand{\wgrad}{\nabla_\delta}
\newcommand{\nci}{\mathfrak{F}}
\newcommand{\Hh}{\mathcal{H}^{[1]}}
\newcommand{\Hhz}{\mathbf{H}^{[1]}_\delta}
\newcommand{\olHh}{\overline{\mathbf{H}^{[1]}}}
\newcommand{\entq}{\mathcal{H}^{[2]}}
\newcommand{\entqz}{\mathbf{H}^{[2]}_\delta}
\newcommand{\E}{\mathcal{H}^{[3/2]}}
\newcommand{\Ez}{\mathbf{H}^{[3/2]}_\delta}
\newcommand{\fish}{\mathcal{F}}
\newcommand{\Di}{\mathcal{E}}
\newcommand{\Diz}{\mathbf{E}_\delta}		
\newcommand{\DiVX}{\mathbf{E}^V}
\newcommand{\DiV}{\mathcal{E}^V}
\newcommand{\DiVz}{\mathbf{E}^V_\delta}
\newcommand{\DiVy}{\mathbf{E}^V_\Delta}
\newcommand{\olDi}{\overline{\mathbf{E}^V}}
\newcommand{\pot}{\mathcal{V}}
\newcommand{\potz}{\mathbf{V}_\delta}
\newcommand{\qmom}{\mathbf{m}_2}
\newcommand{\qmombar}{\overline{\mathbf{m}}_{2}}

\newcommand{\Phiz}{\mathbf{\Phi}}
\newcommand{\hatf}{\theta}
\newcommand{\hatF}{\Theta}
\newcommand{\vel}{\operatorname{v}}
\newcommand{\D}{\operatorname{D}_\delta}
\newcommand{\reff}{{\operatorname{ref}}}
\newcommand{\Dr}{\operatorname{D}_{\operatorname{ref}}}
\newcommand{\ee}{\mathbf{e}}

\newcommand{\M}{[0,M]}
\newcommand{\Md}{\mathcal{M}}
\newcommand{\intom}{\int_\Omega}
\newcommand{\intL}{\int_\Omega}
\newcommand{\intM}{\int_0^M}
\newcommand{\intMd}{\int_{\Md}}

\newcommand{\baru}{u}
\newcommand{\hatz}{\widehat{z}}
\newcommand{\hatu}{\widehat{u}}
\newcommand{\psia}{\psi_\alpha}

\newcommand{\vech}{{\vec\delta}}
\newcommand{\xseqNN}{\xseq_\vech}
\newcommand{\densNN}{\mathcal{P}_\vech(\Omega)}

\newcommand{\essinf}{\operatorname*{ess\,inf}}
\newcommand{\loc}{\text{loc}}
\newtheorem{thm}{Theorem}
\newtheorem{prp}[thm]{Proposition}
\newtheorem{lem}[thm]{Lemma}
\newtheorem{cor}[thm]{Corollary}
\newtheorem{rmk}[thm]{Remark}
\newtheorem{dfn}[thm]{Definition}
\newtheorem{ass}[thm]{Assumption}
\newtheorem{xmp}[thm]{Example}
\newenvironment{proof2}[1]{\vspace{3mm}\noindent\textit{Proof of #1.}}{\hfill$\Box$ \vspace{5mm}}

\selectlanguage{english}

\begin{abstract}
		This paper is concerned with a rigorous convergence analysis of a fully discrete Lagrangian scheme 
    for the Hele-Shaw flow, which is the fourth order thin-film equation with linear mobility in one space dimension.
    The discretization is based on the equation's gradient flow structure in the $L^2$-Wasserstein metric.
    Apart from its Lagrangian character --- which guarantees positivity and mass conservation ---
    the main feature of our discretization is that it dissipates \emph{both} the Dirichlet energy and the logarithmic entropy.
    The interplay between these two dissipations paves the way to proving convergence of the discrete approximations 
    to a weak solution in the discrete-to-continuous limit.
    Thanks to the time-implicit character of the scheme, no CFL-type condition is needed.
    Numerical experiments illustrate the practicability of the scheme.
\end{abstract}

\maketitle

\section{Introduction}\label{sec:intro}
%
%
\subsection{The equation and its properties}
In this paper, we propose and study a fully discrete numerical scheme 
for the following nonlinear fourth order equation on the bounded domain $\Omega=(a,b)$, $a<b$
with no-flux boundary conditions:
\begin{align}
  \label{eq:thinfilm}
  \partial_t u = -\big(u u_{xxx}\big)_x + \big(V_x u\big)_x &\quad\textnormal{for } t>0 \;\textnormal{and}\; x\in\Omega.\\
	\label{eq:bc}
  u u_x=0,\quad  u u_{xxx}=0 &\quad\text{for $t>0$ and $x\in\partial\Omega$}.
\end{align}
We assume that the potential $V\in C^2(\Omega)$ is non-negative with bounded second derivative,
\begin{align}
  \label{eq:Vhypo}
  V\ge0, \quad \Lambda:=\sup|V_{xx}|<\infty,
\end{align}
a typical choice being $V(x)=\frac\Lambda2x^2$.
Equation \eqref{eq:thinfilm} belongs to the family of thin film equations,
\begin{align}
  \label{eq:gthinfilm}
  \partial_t u = -\operatorname{div}(m(u)\operatorname{D}\Delta u).
\end{align}
Equations of this form are used as reduced models for laminar flow with a free liquid-air interface \cite{Oron}.
The case of linear mobility $m(u)=u$ studied here
is further used to describe the pinching of thin necks in a Hele-Shaw cell,
hence \eqref{eq:thinfilm} is often referred to as the \emph{Hele-Shaw flow}.

The analysis of the fourth order degenerate problems in \eqref{eq:gthinfilm} is by no means trivial.
There exists a rich literature on the existence and long-time behavior of solutions,
see e.g. \cite{Bertsch,Passo,BGruen,Gruen,BernisF,Ulusoy,CaTothin}.
These results \emph{could not} be obtained by straight-forward extension of the techniques for second order parabolic equations.
The most decisive difference between second and fourth order is the lack of comparison principles for the latter.
Instead, energy and entropy methods play a key role in the analysis.
Naturally, classical parabolic theory applies in zones on which the solution $u$ is strictly positive, 
yielding $C^\infty$-regularity there.
However, one is typically interested in solutions that are \emph{not} strictly positive but have a compact, time-dependent support.
For such solutions, one only has the regularity that is induced by the energy/entropy estimates,
which is usually something of the type $L^\infty([0,T];H^1(\Omega))\cap L^2([0,T];H^2(\Omega))$, but no better.

It is known that \eqref{eq:gthinfilm} carries a variety of structural properties:
solutions conserve mass and non-negativity, there exists a large class of Lyapunov functionals \cite{Laugesen},
and it can be written as a gradient flow in the energy landscape of the following (modified) Dirichlet functional,
\begin{align}
  \label{eq:info_tf}
  \DiV(u) = \Di(u) + \pot(u),
  \quad\textnormal{with}\quad 
  \Di(u) = \frac{1}{2}\intom\big(\partial_x u\big)^2\dd x,\quad
  \pot(u) = \intom V(x) u(x) \dd x,
\end{align}
with respect to the $L^2$-Wasserstein metric \cite{GiOt}.
The two main a priori estimates for the well-posedness theory of  \eqref{eq:thinfilm} 
are obtained from the dissipation of $\DiV$, 
and from the dissipation of an auxiliary Lyapunov functional, namely the entropy,
\begin{align*}
  \Hh(u) = \intom u\log u\dd x.
\end{align*}
Formally, the respective estimates are immeditaly obtained from an integration by parts;
for $V\equiv0$, one obtains
\begin{align}
  \label{eq:dissnrj_tf}
  -\frac{\dn}{\dd t}\Di(u) &= \intom u_x\,(uu_{xxx})_{xx}\dd x = \intom u\,u_{xxx}^2\dd x, \\
  \label{eq:dissentl_tf}
  -\frac{\dn}{\dd t}\Hh(u) &= \intom \log u\,(uu_{xxx})_x\dd x = \intom u_{xx}^2\dd x.
\end{align}
Notice that energy dissipation does \emph{not} provide $L^2([0,T];H^3(\Omega))$-regularity,
due to the degeneracy of the integrand in regions where $u$ vanishes.
In principle, the famous Bernis estimates \cite{Bernis} could be used to extract an $L^2([0,T];H^3(\Omega))$-bound on $u^{3/2}$,
but we shall not discuss this ansatz here since these --- quite delicate --- estimates seem impossible to discretize.

In the numerical approximation of solutions to \eqref{eq:thinfilm}, 
it is natural to ask for a structure-preserving discretization that inherits at least some of the properties listed above.
A minimal criterion is to guarantee nonnegativity of solutions --- which turns out to be already a challenge.
Here, we try to push the structure preservation as far as possible,
with a scheme that translates both the Lagrangian and the gradient flow nature of \eqref{eq:thinfilm} from continuous to discrete,
and which inherits simultaneously the Dirichlet energy and the entropy as Lyapunov functionals.
We even obtain a discretized version of the estimate \eqref{eq:dissentl_tf},
and that is the key element for the convergence analysis.

Our discretization ansatz is closely related to the one that has been introduced and analyzed recently by the authors \cite{dlssv3}
in the context of the fourth order \emph{quantum drift diffusion} (QDD) equation,
also known as \emph{Derrida-Lebowitz-Speer-Spohn} (DLSS) equation:
\begin{align}
  \label{eq:dlss}
  \partial_t u = -\big( u (\log u)_{xx}\big)_{xx} + \big( V_x u\big)_x.
\end{align}
This equation is a gradient flow in the $L^2$-Wasserstein metric as well \cite{GST}.
In place of $\DiV$, the flow potential is given by the perturbed Fisher information
\begin{align*}
  \fish^V(u) = \fish(u) + \pot(u), \quad \text{with}\quad \fish (u) = 2\intom \big(\partial_x\sqrt{u}\big)^2\dd x,
\end{align*}
and $\pot$ as above.
There exists a non-obvious connection between \eqref{eq:dlss} and the linear heat equation \cite{DMfourth}, 
that is best understood as a relation between $\fish$, $\Hh$ and the $L^2$-Wasserstein metric \cite{MMS}.
The main feature of the particular discretization of \eqref{eq:dlss} used in \cite{dlssv3} is the preservation of that relation,
and that paves the way to a relatively complete analytical treatment of the resulting numerical scheme.
In the context at hand, 
the connection between $\Di$ and $\Hh$ --- that is, the origin of the relation \eqref{eq:dissentl_tf} --- is less obvious,
but on grounds of the ideas developed in \cite{dlssv3}, 
we are able to define a special discretization that admits a discretized version of \eqref{eq:dissentl_tf}.


\subsection{Definition of the discretization}
We are now going to present a discretization for \eqref{eq:thinfilm} 
that approximates weak solutions to \eqref{eq:thinfilm} of finite positive mass $M\in\setRpp$.
Basically, we follow the ansatz from \cite{dlssv3}, but we shall deviate in the discretization of the potential of the flow.
First, the equation \eqref{eq:thinfilm} is re-written in terms of Lagrangian coordinates:
since each $u(t,\cdot)$ is of fixed mass $M$, we can introduce time-dependent Lagrangian maps $\theX(t,\cdot):\M\to\Omega$
implicitly by
\begin{align}
  \label{int:pseudo}
  \xi = \int_{a}^{\theX(t,\xi)}u(t,x)\dd x, \quad \text{for each $\xi\in\M$}.
\end{align}
For the moment, we ignore the ambiguity in the definition of $\theX(t,\xi)$ outside of the support of $u(t)$.
Expressed in terms of $\theX$, and after elementary manipulations, the Hele-Shaw equation \eqref{eq:thinfilm} becomes:
\begin{align}
  \label{eq:zeq_thinfilm}
  \partial_t\theX = \partial_\xi\left(\frac{1}{2}Z^3\partial_{\xi\xi}Z + \frac{1}{4} Z^2\partial_{\xi\xi}\big(Z^2\big)\right) + V(\theX), 
  \quad \text{where} \quad Z(t,\xi):=\frac1{\partial_\xi\theX(t,\xi)}=u\big(t,\theX(t,\xi)\big). 
\end{align}
It is easily seen that equation \eqref{eq:zeq_thinfilm} is the $L^2$-gradient flow for
\begin{align*}
  \DiVX(\theX) := \DiV(u\circ\theX) 
  = \frac12\intM\left[\frac{1}{\theX_\xi}\right]_\xi^2\frac{1}{\theX_\xi}\dd\xi
  +\intM V(\theX)\dd\xi,
\end{align*}
with respect to the usual $L^2$-norm on $L^2(\M;\setR)$.
This directly reflects the gradient flow structure of \eqref{eq:thinfilm} with respect to the $L^2$-Wasserstein metric.
 
Equation \eqref{eq:zeq_thinfilm} is now discretized as follows.
First, fix a spatio-temporal discretization parameter $\Delta=(\tau;\thep)$,
where $\tau>0$ is a time step size, 
and $\delta=M/K$ for some $K\in\setN$ defines an equidistant partition of $\M$ 
into $K$ intervals $[\xi_{k-1},\xi_k]$ of length $\delta$ each, i.e., $\xi_k=k\delta$ for $k=0,1,\ldots,K$.
Accordingly, introduce the central first and second order finite difference operators $\D^1$ and $\D^2$
for discrete functions defined either on the $\xi_k$'s or on the interval midpoints $\xi_{k+1/2}=(k+1/2)\delta$
in the canonical way; see Section \ref{sec:ansatz} for details.

At each time $t=n\tau$, the Lagrangian map $\theX(t,\cdot)$ is approximated by a monotone vector 
\[ \xvec_\Delta^n = (x_1^n,\ldots,x_{K-1}^n)\in\setR^{K-1} \quad\text{with $a<x^n_1<\cdots<x_{K-1}^n < b$} \] 
in the sense that $X(n\tau,k\delta)\approx x^n_k$.
We will further use the convention that $x_0=a$ and $x_K=b$.
For brevity, introduce the vectors $z_\Delta^n=\cz[\xvec_\Delta^n]$ 
with entries
\begin{align}
  \label{eq:zvec}
  \big(\cz[\xvec]\big)_\kappa 
  = \frac{\delta}{x_\kappp - x_\kappm}=\frac1{[\D^1\xvec]_\kappa} \quad \text{for}\quad\kappa=\frac12,\frac32,\ldots,K-\frac12,
\end{align}
and $z_{-\frac{1}{2}}=z_\imh$ and $z_\Kph=z_\Kmh$ by convention.
These vectors approximate the function $Z$ in \eqref{eq:zeq_thinfilm} such that $Z(n\tau,\kappa\delta)\approx z^n_\kappa$.

The fully discrete evolution for the $\xvec_\Delta^n$ is now obtained 
from the following standard discretization of \eqref{eq:zeq_thinfilm} with central finite differences:
\begin{align}
  \label{eq:dgf_tf}
  \frac{x^n_k-x^{n-1}_k}{\tau}
  = \D^1\left[\frac{1}{2}(\zvec^n)^3\D^2[\zvec^n] + \frac{1}{4}(\zvec^n)^2\D^2[(\zvec^n)^2]\right]_k + V_x(x_k).
\end{align}
Note that there are infinitely many equivalent ways to re-write the right-hand side of equation \eqref{eq:zeq_thinfilm},
and accordingly infinitely many (non-equivalent!) central finite-difference discretizations.
Another one, having different properties, is studied in \cite{HO}.
Our convergence result only applies to the particular form \eqref{eq:dgf_tf},
since only for that one, we obtain ``the right'' Lyapunov functionals 
that provide the a priori estimates for the discrete-to-continuous limit.

Finally, we define a time-dependent, spatially piecewise constant density function $u_\Delta^n:\Omega\to\setR_{\ge0}$ 
from the sequence  $\xvec_\Delta:=(\xvec_\Delta^n)_{n=0}^\infty$ via
\begin{align}
  \label{eq:cf}
  u_\Delta^n = \cf_\delta[\xvec_\Delta^n] := \sum_{k=1}^K\frac\delta{x^n_k-x^n_{k-1}} \indy_{(x_{k-1},x_k]}.
\end{align}
By definition, the densities are non-negative and of time-independent mass,
\[ \intom u_\Delta^n\dd x = \sum_{k=1}^K \int_{x_{k-1}}^{x_k}\frac{\delta}{x^n_k-x^n_{k-1}} = K\delta = M. \]
Finally, we introduce the piecewise constant interpolation $\ti{u_\Delta}:\setRp\times\Omega\to\setRp$ in time by
\begin{align}
  \label{eq:tinterpolate}
  \ti{u_\Delta}(t) = u_\Delta^n \quad\text{for $(n-1)\tau<t\le n\tau$},
\end{align}
and $\ti{u_\Delta}(0)=u_\Delta^0$.

%
\subsection{Main results}
For the statement of our first result, fix a discretization parameter $\Delta=(\tau;\thep)$.
On monotone vectors $\xvec\in\setR^{K-1}$ with densities $\zvec=\cz[\xvec]$,
introduce the functionals
\begin{align*}
  \Hhz(\xvec) := \delta\sum_{k=1}^K \log(z_\kmh),
  \quad
  \DiVz(\xvec) := \frac{\delta}{2}\sum_{k=1}^K\frac{z_\kph+z_\kmh}{2}\left(\frac{z_\kph-z_\kmh}{\delta}\right)^2 + \delta\sum_{k=0}^KV_x(x_k),
\end{align*}
which are discrete replacements for the entropy and the modified Dirichlet energy functionals, respectively.
%
\begin{thm}\label{thm:pre_tf}
  From any monotone discrete initial datum $\xvec_\Delta^0$, 
  a sequence of monotone $\xvec_\Delta^n$ satisfying \eqref{eq:dgf_tf} can be constructed 
  by inductively defining $\xvec_\Delta^n$ as a global minimizer of
  \begin{align}\label{eq:dmm}
    \xvec\mapsto\frac\delta{2\tau}\sum_k \big(x_k-x_k^{n-1})^2 + \DiVz(\xvec).
  \end{align}
  This sequence of vectors $\xvec_\Delta^n$
  dissipates both the Boltzmann entropy and the discrete Dirichlet energy,
  \begin{align*}
    \Hhz(\xvec_\Delta^n)\le\Hhz(\xvec_\Delta^{n-1}) \quad\text{and}\quad \DiVz(\xvec_\Delta^n)\le\DiVz(\xvec_\Delta^{n-1}).
  \end{align*}
\end{thm}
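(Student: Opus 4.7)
I would first establish that $F(\xvec):=\frac{\delta}{2\tau}\sum_k(x_k-x_k^{n-1})^2+\DiVz(\xvec)$ attains its infimum on the open admissible set $\mathcal{A}=\{\xvec\in\setR^{K-1}:a<x_1<\cdots<x_{K-1}<b\}$. Since $\mathcal{A}$ is bounded and $F$ is continuous on it, the only issue is to rule out minimizing sequences that escape to the boundary $\partial\mathcal{A}$, where consecutive $x_k$'s coalesce. This is handled by the Dirichlet term: whenever a gap $x_k-x_{k-1}$ shrinks to zero, $z_\kmh$ diverges, and the summand $\tfrac{z_\kph+z_\kmh}{2}\bigl(\tfrac{z_\kph-z_\kmh}{\delta}\bigr)^2$ either blows up directly (if the neighboring $z$ stays bounded) or, if a whole cluster of adjacent gaps collapses, forces the summand at the edge of the cluster to blow up. Hence a minimizer $\xvec_\Delta^n\in\mathcal{A}$ exists, and it satisfies the first-order condition $\tfrac{\delta}{\tau}(x_j^n-x_j^{n-1})+\partial_{x_j}\DiVz(\xvec_\Delta^n)=0$. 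A tedious but mechanical chain-rule computation then shows that $-\delta^{-1}\partial_{x_j}\DiVz$ equals exactly the right-hand side of \eqref{eq:dgf_tf}; the particular coefficients $1/2$ and $1/4$ appearing there are dictated precisely by this matching.

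\textbf{Dirichlet energy dissipation.} This follows immediately from the minimality of $\xvec_\Delta^n$ against the competitor $\xvec_\Delta^{n-1}$: one obtains $\tfrac{\delta}{2\tau}\|\xvec_\Delta^n-\xvec_\Delta^{n-1}\|^2+\DiVz(\xvec_\Delta^n)\le\DiVz(\xvec_\Delta^{n-1})$, and dropping the non-negative kinetic term proves the claim.

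\textbf{Entropy dissipation.} Observe that $\Hhz(\xvec)=-\delta\sum_k\log(x_k-x_{k-1})+\mathrm{const.}$ is convex on $\mathcal{A}$. Contracting the Euler--Lagrange equation against $\nabla\Hhz(\xvec_\Delta^n)$ yields the identity
\begin{equation*}
\tfrac{\delta}{\tau}\bigl\langle\xvec_\Delta^n-\xvec_\Delta^{n-1},\nabla\Hhz(\xvec_\Delta^n)\bigr\rangle=-\bigl\langle\nabla\DiVz(\xvec_\Delta^n),\nabla\Hhz(\xvec_\Delta^n)\bigr\rangle,
\end{equation*}
while convexity of $\Hhz$ gives $\Hhz(\xvec_\Delta^n)-\Hhz(\xvec_\Delta^{n-1})\le\langle\nabla\Hhz(\xvec_\Delta^n),\xvec_\Delta^n-\xvec_\Delta^{n-1}\rangle$. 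Together,
\begin{equation*}
\Hhz(\xvec_\Delta^n)-\Hhz(\xvec_\Delta^{n-1})\le-\tfrac{\tau}{\delta}\bigl\langle\nabla\DiVz(\xvec_\Delta^n),\nabla\Hhz(\xvec_\Delta^n)\bigr\rangle.
\end{equation*}
\emph{The main obstacle} is to verify that the cross-dissipation $\langle\nabla\DiVz,\nabla\Hhz\rangle$ is pointwise non-negative on $\mathcal{A}$. This is the discrete counterpart of the formal identity \eqref{eq:dissentl_tf}. Concretely, I would expand both gradients coordinate-by-coordinate, apply discrete summation by parts twice so as to shift all finite differences onto a single factor, and then rearrange the resulting expression into a manifestly non-negative sum that mimics the continuous quantity $\int u_{xx}^2\dd x$. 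It is precisely in order to make this algebraic identity close at the fully discrete level that the symmetric form \eqref{eq:dgf_tf} of the scheme was engineered, and this step is therefore the structural heart of the theorem.
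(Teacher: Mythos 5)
Your treatment of existence (Dirichlet energy blows up on the boundary of the simplex), the Euler--Lagrange identification with \eqref{eq:dgf_tf}, and the Dirichlet-energy dissipation by comparison with the competitor $\xvec_\Delta^{n-1}$ are all in line with the paper's argument (see Lemma~\ref{lem:cfl} and Proposition~\ref{prp:energy}). The set-up of the entropy estimate --- contract the Euler--Lagrange equation against $\nabla\Hhz(\xvec_\Delta^n)$ and exploit convexity of $\Hhz$ --- is also exactly the paper's Lemma~\ref{lem:dissipation_tf}.

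The genuine gap is in the last step, the sign of the cross-dissipation. You assert that $\langle\nabla\DiVz,\nabla\Hhz\rangle\ge0$ ``pointwise'' and that one can reach this by discrete summation by parts and rearrangement into ``a manifestly non-negative sum.'' That is not how the identity closes. When one expands $4\wspr{\wgrad\Diz}{\wgrad\Hhz}$ using the formula $\Diz=\wspr{\wgrad\Hhz}{\wgrad\entqz}$ and the Hessians \eqref{eq:HessHFzQz}, one does \emph{not} obtain a sum of squares. One obtains a ``good'' term $2\delta\sum_\kappa z_\kappa^3[\operatorname{D}_\delta^2 z]_\kappa^2$ plus a cross term $\delta\sum_\kappa z_\kappa^2[\operatorname{D}_\delta^2 z]_\kappa\,[\operatorname{D}_\delta^2(z^2)]_\kappa$ of indefinite sign. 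After an elementary identity for $[\operatorname{D}_\delta^2(z^2)]_\kappa$ and Young's inequality, the lower bound reads
\begin{align*}
4\wspr{\wgrad\Diz}{\wgrad\Hhz}
\ \ge\ \frac{5}{2}\,\delta\sum_{\kappa}z_\kappa^3[\operatorname{D}_\delta^2 z]_\kappa^2
\;-\;\frac{2}{3}\,\delta\sum_{k}z_k\Big(\frac{z_\kph-z_\kmh}{\delta}\Big)^4,
\end{align*}
and the second sum must be dominated by the first with the \emph{sharp} constant. This is precisely the content of the discrete Sobolev-type estimate \eqref{eq:L4D2_tf}, whose proof is a separate and non-obvious piece of algebra (a ``telescoping'' re-expression of the quartic sum followed by H\"older with the exact constant $9/4$, so that $5/2-\tfrac23\cdot\tfrac94=1>0$). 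Your sketch neither names this ingredient nor makes the competition between the two terms visible; without it the argument does not close.

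A secondary but real issue: the quantity you claim is non-negative is $\langle\nabla\DiVz,\nabla\Hhz\rangle$, i.e.\ including the potential. The paper does not prove this. The contribution of $\potz$ is only bounded \emph{from below} by $-\Lambda M$ (it is genuinely of indefinite sign --- for convex $V$ it is non-positive), and the actual dissipation inequality \eqref{eq:dissipation_tf} carries the compensating term $\Lambda M(T+1)$ on the right. So the entropy part of the theorem should be read, and proved, as the cumulative dissipation bound \eqref{eq:dissipation_tf} obtained from the $\Diz$-part plus an $-\Lambda M$ defect, rather than as a pointwise sign condition on the full cross-dissipation.
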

To state our main result about convergence, recall the definition \eqref{eq:tinterpolate} of the time interpolant.
Further, $\Delta$ symbolizes a whole sequence of mesh parameters from now on,
and we write $\Delta\to0$ to indicate that $\tau\to0$ and $\delta\to0$ simultaneously.
\begin{thm}
  \label{thm:main_tf}
  Let a non-negative initial condition $u^0\in H^1(\Omega)$ of finite second moment be given
	and fix a time horizont $T>0$.
  Choose initial approximations $\xvec_\Delta^0$ 
  such that $u^0_\Delta=\cf[\xvec_\Delta^0]\rightharpoonup u^0$ weakly in $H^1(\Omega)$ as $\Delta\to0$,
  and
  \begin{align}\label{eq:genhypo_tf}
    \olDi := \sup_\Delta\DiVz(\xvec^0_\Delta)<\infty,
    \quad
    \olHh := \sup_\Delta\Hhz(\xvec^0_\Delta)<\infty.
  \end{align}
  For each $\Delta$, construct a discrete approximation $\xvec_\Delta$ 
  according to the procedure described in Theorem \ref{thm:pre_tf} above.
  Then, there are a subsequence with $\Delta\to0$ and a limit function $u_*\in C([0,T]\times\Omega)$ such that:
  \begin{itemize}
  \item $\ti{u_\Delta}$ converges to $u_*$ locally uniformly on $[0,T]\times\Omega$,
  \item $u_*\in L^2([0,T];H^1(\Omega))$, 
  \item $u_*(0)=u^0$,
  \item $u_*$ satisfies the following weak formulation of \eqref{eq:thinfilm} with no-flux boundary conditions \eqref{eq:bc}:
    \begin{align}\label{eq:weak_intro_tf}
      \int_0^T\int_\setR \partial_t\eta\rho u_* \dd t\dd x + \int_0^T \eta N(u_*,\rho)\dd t = 0,
    \end{align}
    for any test functions $\rho\in C^{\infty}(\Omega)$ with $\rd(a)=\rd(0)=0$, and $\eta\in C_c^{\infty}((0,T))$,
    where the operator $N$ is given by
    \begin{align}
      \label{eq:N_tf}
      N(u,\rho) := \frac12\intom \left( (u^2)_x \rho_{xxx} + 3 u_x^2 \rho_{xx}\right) \dd x + \intom V_x u \rho_x \dd x.
    \end{align}
  \end{itemize}
\end{thm}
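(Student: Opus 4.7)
My plan is to follow the standard strategy for variational time-discretizations: derive a priori bounds from the Lyapunov functionals, extract a compact subsequence via an Arzel\`a--Ascoli argument, and pass to the limit in the discrete Euler--Lagrange equation \eqref{eq:dgf_tf}. The dissipation of $\DiVz$ from Theorem \ref{thm:pre_tf}, together with hypothesis \eqref{eq:genhypo_tf}, gives $\DiVz(\xvec_\Delta^n)\le\olDi$ uniformly in $n$ and $\Delta$; reinterpreted at the level of the piecewise constant density $u_\Delta^n$, this is a uniform-in-time $H^1(\Omega)$-bound. The minimizing-movement principle additionally yields $\sum_{n\ge1}\frac{\delta}{2\tau}\sum_k(x_k^n-x_k^{n-1})^2\le\olDi$, which encodes time regularity of the Lagrangian maps. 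Since $H^1(\Omega)\hookrightarrow C^{0,1/2}(\Omegac)$ compactly in one dimension, these bounds imply equicontinuity in space, while a standard interpolation combining the Wasserstein-type summation with the $H^1$-bound gives equicontinuity in time of $\ti{u_\Delta}$. Arzel\`a--Ascoli then produces a subsequence with $\ti{u_\Delta}\to u_*$ locally uniformly on $[0,T]\times\Omega$, with $u_*\in L^\infty([0,T];H^1(\Omega))$ by weak-$\ast$ lower semicontinuity and $u_*(0)=u^0$ by the assumed convergence of initial data.

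To pass to the limit in the nonlinear flux, the $H^1$-bound alone is insufficient, so I would derive a discrete analog of the entropy-dissipation identity \eqref{eq:dissentl_tf} by a \emph{flow-interchange} argument in the spirit of \cite{dlssv3}: perturb $\xvec_\Delta^n$ along the $L^2$-gradient flow of the discrete entropy $\Hhz$, and use the minimality of $\xvec_\Delta^n$ in \eqref{eq:dmm} to turn the resulting one-sided derivative inequality, after telescoping in $n$, into a control of the form $\tau\sum_n\mathcal{G}(\xvec_\Delta^n)\lesssim\olHh+\olDi$, where $\mathcal{G}$ corresponds, upon reinterpretation, to $\int_\Omega u_{xx}^2\dd x$. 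This discrete $L^2_tH^2_x$-type bound is what makes the next step go through.

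Equipped with these estimates, I pass to the limit as follows. For a test pair $\eta(t)\rho(x)$ with $\rho_x(a)=\rho_x(b)=0$, multiply \eqref{eq:dgf_tf} by $\delta\,\rho_x(x_k^n)\,\eta(n\tau)$ and sum in $k$ and $n$; the Neumann condition on $\rho_x$ makes the discrete summation by parts clean and eliminates all boundary contributions. A discrete Abel summation transforms the time-difference term into the first term of \eqref{eq:weak_intro_tf}. On the spatial side, repeated discrete summation by parts regroups the fluxes into discrete replacements of $\frac12\int((u^2)_x\rho_{xxx}+3u_x^2\rho_{xx})\dd x$ and $\int V_x u\,\rho_x\dd x$; the identification of each discrete factor uses \eqref{eq:zvec} together with Taylor expansion of $\rho$ around the nodes $x_k$ and midpoints $x_\kappa$, and a consistency estimate for the finite-difference operators $\D^1$, $\D^2$.

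The technical heart of the proof --- and the main obstacle --- is the rigorous derivation of the flow-interchange estimate above, together with the subsequent passage to the limit in the nonlinear combinations $(\zvec^n)^3\D^2[\zvec^n]$ and $(\zvec^n)^2\D^2[(\zvec^n)^2]$ that encode the flux in \eqref{eq:dgf_tf}. The locally uniform convergence of $u_\Delta$ controls the $z$-factors, but only the discrete $H^2$-bound from the entropy-dissipation estimate allows passage to the limit in the discrete second differences and in the quadratic quantity $u_x^2$. Producing that bound requires carefully chosen discrete chain-rule identities and summation-by-parts manipulations analogous to but distinct from those carried out for the DLSS equation in \cite{dlssv3}, and tailored to the specific algebraic form $\frac12 z^3 z_{\xi\xi}+\frac14 z^2(z^2)_{\xi\xi}$ selected in the flux discretization.
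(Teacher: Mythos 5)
Your high-level outline --- two Lyapunov functionals, flow interchange against the discrete entropy, compactness via Arzel\`a--Ascoli, then pass to the limit in a discrete weak formulation --- does match the architecture of the paper's proof (Propositions \ref{prp:energy}, \ref{prp:convergence1_tf}, Lemma \ref{lem:dissipation_tf}, Section \ref{sec:weak}). However, there are two substantive gaps in the middle of your argument.

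First, you claim the flow-interchange estimate yields ``a discrete $L^2_tH^2_x$-type bound'' with $\mathcal{G}$ corresponding to $\int u_{xx}^2\dd x$. This is not what the entropy dissipation gives, and the paper explicitly explains why you cannot get it: the discrete dissipation estimate \eqref{eq:dissipation_tf} controls $\tau\sum_n\delta\sum_\kappa (z^n_\kappa)^3[\D^2\zvec^n]_\kappa^2$, which is a discretization of $\intM Z^3Z_{\xi\xi}^2\dd\xi=\intom u^2(\log u)_{xx}^2\dd x$ (cf.\ \eqref{eq:ghj}), not of $\intom u_{xx}^2\dd x$; and upgrading either of these to a genuine $H^2$-bound on a density reconstruction would require a globally $C^{1,1}$, positivity-preserving interpolation of the piecewise constant $u_\Delta$, which the paper rejects as impractical. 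Instead, the paper converts the dissipation bound into a total-variation control on $\partial_x\hatu_\Delta$ (Proposition \ref{prp:BVbound_tf}) and then shows $\ti{\hatu_\Delta}$ is Cauchy in $L^2([0,T];H^1(\Omega))$ (Proposition \ref{prp:convergence2_tf}) by combining $\|\partial_x\hatu\|_{L^2}^2\le\|\hatu\|_{L^\infty}\tv{\partial_x\hatu}$ with the uniform convergence. That strong $L^2_tH^1_x$ convergence is what permits passage to the limit in $u_x^2$ and $(u^2)_x$; an $H^2$-type bound is neither obtained nor needed.

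Second, you plan to ``pass to the limit in the discrete second differences'' $(\zvec^n)^3\D^2[\zvec^n]$ and $(\zvec^n)^2\D^2[(\zvec^n)^2]$. The paper never does this, and indeed cannot, since no second-order compactness is available. Instead, the discrete flux is first hit with a summation by parts against the test velocity $\rvec$, and the result is painstakingly rearranged (Lemma \ref{lem:rhs_tf}) into seven terms $A_1^n,\ldots,A_7^n$ each of which involves only \emph{first-order} differences $(z_\kph-z_\kmh)/\delta$ of the density vector. Only then are these terms matched to the continuous integrals in $N(u,\rho)$, using Taylor expansions of $\rho$ together with delicate oscillation estimates \eqref{eq:weakoscillation_tf}, \eqref{eq:weakoscillation2_tf}. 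Without this summation-by-parts reduction to first differences, the $H^1$-type compactness would not suffice to close the limit, so your proposed passage in the second differences would stall exactly where you label the ``technical heart''.

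Finally, a smaller point: the paper does not multiply the Euler--Lagrange equation \eqref{eq:dgf_tf} directly by the test function, but rather uses the one-sided variational inequality from minimality in \eqref{eq:dmm} (Lemma \ref{lem:weakfd}), comparing $\xvec_\Delta^n$ to a perturbation along the test vector field; this produces the error rate $\mathcal O(\tau)+\mathcal O(\delta^{1/4})$ in a cleaner way, but your approach here is a matter of implementation rather than a gap.
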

\begin{rmk}
  \begin{enumerate}
  \item \emph{Quality of convergence:}
    Since $\ti{u_\Delta}$ is piecewise constant in space and time,
    uniform convergence is obviously the best kind of convergence that can be achieved.
  \item \emph{Rate of convergence:}
    Numerical experiments with smooth initial data $u^0$ show that 
    the rate of convergence if of order $\tau+\delta^2$, see Section \ref{sec:num_tf}.
  \item \emph{No uniqueness:}
    Since our notion of solution is very weak,
    we cannot exclude that different subsequences of $\ti{u_\Delta}$ converge to different limits.
  \item \emph{Initial approximation:}
    The assumptions in \eqref{eq:genhypo_tf} are not independent:
    boundedness of $\DiVz(\xvec_\Delta^0)$ implies boundedness of $\Hhz(\xvec_\Delta^0)$ from above.
  \end{enumerate}
\end{rmk}

\subsection{Relation to the literature}
The idea to derive numerical discretizations for solution of Wasserstein gradient flows from the Lagrangian representation
is not new in the literature. 
A very general (but rather theoretical) treatise was given in \cite{Kinderlehrer}.
Several practical schemes have been developed on grounds 
of the Lagrangian representation for this class of evolution problems,
mainly for second-order diffusion equations \cite{Budd,BCW,MacCamy,Russo},
but also for chemotaxis systems \cite{BCC},
for non-local aggregation equations \cite{CarM,Mary}, 
and for variants of the Boltzmann equation \cite{GosT2}.

Lagrangian schemes for fourth order equations are relatively rare.
Alternative Lagrangian discretizations for \eqref{eq:thinfilm} or related thin-film type equations 
have been proposed and analyzed in \cite{Naldi,GosT2},
but no rigorous convergence analysis has been carried out.
We also mention two schemes \cite{DMMnum,dlssv3} for the quantum drift diffusion equation, 
that is formally similar to \eqref{eq:thinfilm}. 
In \cite{dlssv3}, the idea to enforce dissipation of two Lyapunov functionals has been developed,
and was used to rigorously study the discrete-to-continuous limit.
For the analysis here, we shall borrow various ideas from \cite{dlssv3}.

A comment is in place on related Lagrangian schemes in higher spatial dimensions.
Here, and also in our related works \cite{HO,dde,dlssv3},
the most significant benefit from working on a one-dimensional interval, 
is that the space of densities is flat with respect to the $L^2$-Wasserstein metric;
it is of non-positive curvature in higher dimensions, 
which makes the numerical approximation of the Wasserstein distance significantly more difficult.
Just recently, a very promising approach for a truely structure-preserving discretization in higher space dimensions 
has been made \cite{Merigot}.
There, a numerical solver for second order drift diffusion equations with aggregation in multiple space dimensions
is introduced that preserves 
--- in addition to the Lagrangian and the gradient flow aspects ---
also ``some geometry'' of the optimal transport.
These manifold structural properties enable the authors to rigorously perform a (partial) convergence analysis.
It is currently unclear if that approach can be pushed further to deal with fourth order equations as well.

Among the numerous non-Lagrangian approaches to numerical discretization of thin film equations,
we are aware of two contributions \cite{GrRu,BeZh} in which the idea to enforce simultaneous dissipation
of energy and entropy has been implemented.
The discretization is performed using finite elements \cite{GrRu} and finite differences \cite{BeZh}, respectively.
The discrete to continuous limit has been rigorously analyzed for both schemes.
In difference to the convergence result presented here, 
certain positivity hypotheses on the limit solution are either assumed a priori \cite{BeZh},
or are incorporated in the weak form of the limit equation \cite{GrRu}.
See, however, \cite{Gruen2} for an improvement of the convergence result.

The primary challenge in our convergence analysis is to carry out all estimates
under \emph{no additional assumptions on the regularity} of the limit solution $u_*$.
In particular, we would like to deal with compactly supported solutions of a priori low regularity at the edge of the support.
Also, we allow very general initial conditions $u^0$.
Without sufficient a priori smoothness, 
we cannot simply use Taylor approximations and the like to estimate the difference between $\ti{u_\Delta}$ and $u_*$.
Instead, we are forced to derive new a priori estimates directly from the scheme, using our two Lyapunov functionals.

On the technical level, the main difficulty is that our scheme is \emph{fully discrete}, 
which means that we are working with spatial difference quotients instead of derivatives.
Lacking a discrete chain rule, 
the derivation of the relevant estimates turns out to be much harder than for the original problem \eqref{eq:thinfilm}.
For instance, we are able to prove a compactness estimate for $u_\Delta$, but not for its inverse distribution function,
although both estimates would be equivalent in a smooth setting.
This forces us to switch back and forth between 
the original \eqref{eq:thinfilm} and the Lagrangian \eqref{eq:zeq_thinfilm} formulation of the thin-film equation.

\subsection{Key estimates}\label{sec:intro_struct}
We give a very formal outline for the derivation of the two main a priori estimate on the fully discrete solutions.

The first main estimate is related to the gradient flow structure of \eqref{eq:thinfilm}: 
it is the potential flow of the modified Dirichlet energy $\DiV$ with respect to the Wasserstein metric $\wass$.
The consequences, which are immediate from the abstract theory of gradient flows \cite{AGS}, 
are that $t\mapsto\DiV(u(t))$ is monotone, 
and that each solution ``curve'' $t\mapsto u(t)$ is globally H\"older-$\frac12$-continuous with respect to $\wass$.
In order to inherit these properties to our discretization, 
the latter is constructed as a gradient flow of a flow potential $\DiVz$ (which approximates $\DiV$ in a certain sense)
with respect to a particular metric on the space of monotone vectors (which is related to $\wass$).
See Section \ref{sec:ansatz} below for details.
The corresponding fully discrete energy estimates are collected in Proposition \ref{prp:energy}.
We are not able to give a meaning to the full energy dissipation relation \eqref{eq:dissnrj_tf} on the discrete side,
but this is irrelevant to our analysis.

The second, equally important discrete estimate mimicks \eqref{eq:dissentl_tf}.
Unfortunately, the $L^2$-norm of $u_{xx}$ is an inconvenient quantity to deal with, for two reasons.
First, we need to perform most of the estimates in the Lagrangian picture, where
\begin{align*}
  \intom u_{xx}^2\dd x= \frac14\intM Z(Z^2)_{\xi\xi}^2\dd\xi
\end{align*}
is algebraically more difficult to handle than
the equivalent functional
\begin{align}
  \label{eq:ghj}
  \intom u^2\,(\log u)_{xx}^2\dd x = \intM Z^3Z_{\xi\xi}^2\dd\xi
\end{align}
which we shall eventually work with, see Lemma \ref{lem:dissipation_tf}.
Our discretization \eqref{eq:dgf_tf} is taylor-made in such a way that 
entropy dissipation yields a discrete version of \eqref{eq:ghj}.

Second, the formulation of an $H^2$-estimate would require 
a global $C^{1,1}$-interpolation of the piecewise constant densities $u_\Delta$ that respects positivity,
which seems impractical.
Instead, we settle for a control on the total variation 
of the first derivative $\partial_\xi\hatu_\Delta$ of a simple $C^{0,1}$-interpolation $\hatu_\Delta$,
see Proposition \ref{prp:BVbound_tf}.
This TV-control is a perfect replacement for the $H^2$-estimate in \eqref{eq:dissentl_tf},
and is the source for compactness, see Proposition \ref{prp:convergence2_tf}.

%
\subsection{Structure of the paper}
Below, we start with a detailed description of our numerical scheme
as a discrete Wasserstein-like gradient flow and discuss structural consistency of our approach.
In Section \ref{sec:apriori}, we derive various a priori estimates on the fully discrete solutions.
This leads to the main convergence result in Proposition \ref{prp:convergence2_tf},
showing the existence of a limit function $u_*$ for $\Delta\to0$.
This limit function satisfies the weak formulation of \eqref{eq:thinfilm} stated in \eqref{eq:weak_intro_tf};
this is shown in Section \ref{sec:weak}. 
Finally, we report on numerical experiments and discuss the observed rate of convergence in Section \ref{sec:num_tf}.

\section{Definition of the fully discrete scheme}
%
%
The main aim of this section is to interprete the discrete equations \eqref{eq:dgf_tf} 
as time steps in the minimizing movement scheme for a suitable discretization $\DiVz$ of the functional $\DiV$
with respect to a Wasserstein-like metric on a finite-dimensional submanifold $\densN$ of $\dens$.
%
\subsection{Ansatz space and discrete entropy/information functionals}\label{sec:ansatz}
Fix $K\in\setN$, let $\delta:=1/K$, and define $\xi_k=Mk/K$ for $k=0,1,\ldots,K$.
For further reference, we introduce the sets of integer and half-integer indices
\begin{align*}
  \ival = \{0,1,\ldots,K\},\quad \ivalp = \{1,\ldots,K-1\},\quad
	\text{and}\quad
  \hval = \Big\{\frac12,\frac32,\ldots,K-\frac12\Big\}.
\end{align*}
First and second order central difference operators $\D^1$ and $\D^2$ are defined in the usual way:
if $\yvec=(y_\ell)_{\ell\in\ival}$ is a discrete function defined for integer indices $\ell\in\ival$ (i.e., on the nodes $\xi_\ell$),
then $\D^1\yvec$ and $\D^2\yvec$ are defined 
on half-integer indices $\kappa\in\hval$ (i.e., on the intervals $[\xi_\kappm,\xi_\kappp]$),
and on the ``inner'' integer indices $k\in\ivalp$, respectively,
with
\begin{align*}
  [\D^1\yvec]_\kappa  = \frac{y_\kph-y_\kmh}\delta,\quad
  [\D^2\yvec]_k = \frac{y_{\kappa+1}-2y_\kappa+y_{\kappa-1}}{\delta^2}. 
\end{align*}
If $\yvec=(y_\lambda)_{\lambda\in\hval}$ is defined for half-integer indices $\lambda\in\hval$ instead,
then these definitions are modified in the obvious way
to have $\D^1\yvec$ and $\D^2\yvec$ defined for integers $k\in\ival$ and half-integers $\kappa\in\hval$, respectively;
$\yvec$ needs to be augmented with additional values for $y_{-\imh}$ and $y_{\Kph}$ in this case.

Next, we introduce the set of monotone vectors
\begin{align*}
  \xseqN := \big\{ (x_0,\ldots,x_K) \,\big|\, x_0 < x_1 < \ldots < x_{K-1} < x_K \big\} \subseteq \setR^{K-1}.
\end{align*}
Each $\xvec\in\xseqN$ corresponds to a vector $\zvec=(z_{1/2},z_{3/2},\ldots,z_{K-1/2})$ of density values $z_\kappa$ via \eqref{eq:zvec}.
Our convention is that $z_{-\imh}= z_\imh$ and $z_{\Kph}=z_\Kmh$.
For a function $f:\xseqN\to\setR$, 
its first and second differential, $\grad f:\xseq\to\setR^{K-1}$ and $\grad^2f:\xseq\to\setR^{(K-1)\times(K-1)}$, respectively, 
are defined by $[\grad f(\xvec)]_k=\partial_{x_k} f(\xvec)$ and by $[\grad f(\xvec)]_{k\ell}=\partial_{x_k}\partial_{x_\ell} f(\xvec)$.
Further, $f$'s gradient $\wgrad f$ is given by $\wgrad f(\xvec) = \delta^{-1}\grad f(\xvec)$. 
For vectors $\vvec,\wvec\in\setR^{K-1}$, the scalar product $\wspr{\cdot}{\cdot}$ is defined 
by
\begin{align*}
  \wspr{\vvec}{\wvec} = \delta\sum_{k=0}^{K}v_kw_k, 
  \quad\text{with induced norm}\quad 
  \wnrm{\vvec} = \sqrt{\wspr{\vvec}{\vvec}}.
\end{align*}
\begin{xmp}
  Each component $z_\kappa$ of $\zvec=\cz_\theh[\xvec]$ is a function on $\xseqN$, 
  and
  \begin{align}
    \label{eq:zrule}
    \grad z_\kappa = -z_\kappa^2\,\frac{\ee_{\kappa+\frac12}-\ee_{\kappa-\frac12}}{\delta},
  \end{align}
  where $\ee_k\in\setR^{K-1}$ is the $k$th canonical unit vector, with the convention $\ee_0=\ee_K=0$. 
\end{xmp}
The main object of interest is the finite-dimensional submanifold $\densN$ of $\dens$ 
that consists of all locally constant density functions of the form $u=\cf_\theh[\xvec]$, 
with $\cf_\theh$ given in \eqref{eq:cf}, where $\xvec\in\xseqN$.
To each density function $u=\cf_\theh[\xvec]\in\densN$ we associate its \emph{Lagrangian map}
as the monotonically increasing function $\cX=\cX_\theh[\xvec]:\M\to\Omega$
that is piecewise linear with respect to $(\xi_0,\xi_1,\ldots,\xi_K)$ and satisfies $\cX(\xi_k)=x_k$ for $k=0,\ldots,K$.
The density $u$ and its Lagrangian map $\cX$ are related by
\[u\circ\cX=\frac{1}{\cX_\xi}.\]
\begin{rmk}\label{rmk:wass}
  In one space dimension, the Wasserstein metric on $\dens$ 
  is isometrically equivalent to the $L^2$-norm on the flat space of Lagrangian maps, see e.g. \cite{VilBook}.
  Our norm $\wnrm{\xvec-\yvec}$ is not identical 
  but equivalent to the $L^2$-norm between the Lagrangian maps $\cX_\theh[\xvec]$ and $\cX_\theh[\yvec]$.
  Consequently, there exist $K$-independent constants $c_1,c_2>0$, such that
  \begin{align}\label{eq:metricequivalent}
    c_1\wnrm{\xvec-\yvec} \leq \wass(\cf_\theh[\xvec],\cf_\theh[\yvec]) \leq c_2\wnrm{\xvec-\yvec},\quad\text{for all}\quad \xvec,\yvec\in\xseqN.
  \end{align}
  See \cite[Lemma 7]{dde} for a proof.
\end{rmk}
Next, consider two functionals $\Hhz,\entqz:\xseqN\to\setR$ given as follows: 
\begin{align*}
  \Hhz(\xvec) &= \Hh(\cf_\theh[\xvec]) 
  = \intom \cf_\theh[\xvec]\log\big(\cf_\theh[\xvec]\big)\dd x
  = \delta\sum_{k=1}^K \log(z_\kmh), \\
  \entqz(\xvec) &= \entq(\cf_\theh[\xvec]) 
  = \frac{1}{4}\intom \big(\cf_\theh[\xvec]\big)^2\dd x
  = \frac{\delta}{4}\sum_{k=1}^K z_\kmh.
\end{align*}
Here $\Hhz$ is just the restriction of the the entropy $\Hh$ to $\xseqN$,
and $\entqz$ is the restriction of the quadratic Renyi entropy
\begin{align*}
  \entq(u) = \frac14\intom u^2\dd x = \frac14\intM Z\dd\xi. 
\end{align*}
Using \eqref{eq:zrule}, we obtain an expicit representation of the gradients,
\begin{equation}
  \begin{split}\label{eq:gradHFzQz}
    \grad\Hhz(\xvec) = \delta\sum_{\kappa\in\hval} z_\kappa \frac{\ee_{\kappm}-\ee_{\kappp}}\delta,
    \qquad
    \grad\entqz(\xvec) = \frac{\delta}{4}\sum_{\kappa\in\hval} z_\kappa^2 \frac{\ee_{\kappm}-\ee_{\kappp}}\delta,
  \end{split}
\end{equation}
and --- for further reference --- also of the Hessians,
\begin{equation}
  \begin{split}\label{eq:HessHFzQz}
    \grad^2\Hhz(\xvec) &= \delta\sum_{\kappa\in\hval} z_\kappa^2 \left(\frac{\ee_{\kappm}-\ee_{\kappp}}\delta\right)\left(\frac{\ee_{\kappm}-\ee_{\kappp}}\delta\right)^T, \\
    \grad^2\entqz(\xvec) &= \frac{\delta}{2}\sum_{\kappa\in\hval} z_\kappa^3 \left(\frac{\ee_{\kappm}-\ee_{\kappp}}\delta\right)\left(\frac{\ee_{\kappm}-\ee_{\kappp}}\delta\right)^T.
  \end{split}
\end{equation}
A key property of our simple discretization ansatz is the preservation of convexity.
\begin{lem}
  The functionals $\Hhz$ and $\entqz$ are convex on $\xseq$.
\end{lem}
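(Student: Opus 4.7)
The plan is to deduce convexity directly from the explicit Hessian formulas \eqref{eq:HessHFzQz}, since both functionals are smooth on the open set $\xseqN$ (the density values $z_\kappa = \delta/(x_\kappp - x_\kappm)$ are finite and strictly positive whenever $\xvec$ is strictly monotone). First I would briefly note that $\xseqN$ is itself convex: a convex combination of two strictly increasing sequences in $\setR^{K+1}$ remains strictly increasing, so the standard equivalence ``$\grad^2 f \succeq 0$ pointwise on a convex open set implies $f$ convex there'' is applicable.

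Next, inspect \eqref{eq:HessHFzQz}. Each summand in $\grad^2\Hhz(\xvec)$ is of the form
\[
z_\kappa^2\,\vvec_\kappa\vvec_\kappa^T, \quad \text{where} \quad \vvec_\kappa := \frac{\ee_{\kappm}-\ee_{\kappp}}{\delta},
\]
and the prefactor $z_\kappa^2$ is strictly positive. Since $\vvec_\kappa\vvec_\kappa^T$ is a rank-one positive semi-definite matrix, each summand is PSD, and the sum of PSD matrices is PSD. The argument for $\grad^2\entqz(\xvec)$ is identical with $z_\kappa^2$ replaced by $z_\kappa^3/2$, which is again strictly positive. This gives $\grad^2\Hhz(\xvec)\succeq 0$ and $\grad^2\entqz(\xvec)\succeq 0$ everywhere on $\xseqN$, hence convexity of both functionals.

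There is essentially no obstacle here; the proof reduces to two observations, namely that $\xseqN$ is convex and that the Hessians in \eqref{eq:HessHFzQz} are manifestly written as non-negative linear combinations of rank-one PSD matrices. The only thing worth emphasizing is that this convexity is a genuine feature of the chosen discretization: it is the reason the minimization problem \eqref{eq:dmm} defining $\xvec_\Delta^n$ in Theorem \ref{thm:pre_tf} is well-posed, once combined with the (separately verified) convexity of $\DiVz$ and the strict convexity of the quadratic distance term.
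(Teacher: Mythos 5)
Your proof is correct and follows exactly the same route as the paper, which simply states that convexity ``follows by inspection of the Hessians \eqref{eq:HessHFzQz}''; you have just made explicit the two elementary observations (convexity of the domain $\xseqN$ and positive semi-definiteness of the rank-one sum) that the paper leaves implicit.
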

\begin{proof}
  This follows by inspection of the Hessians \eqref{eq:HessHFzQz}.  
\end{proof}
A conceptually different discretization is needed for the energy functional $\Di$ from \eqref{eq:info_tf}, 
which is identically $+\infty$ on $\densN$:
\begin{align}
  \label{eq:HS_step1}
  \Diz(\xvec) := \wspr{\wgrad\Hhz(\xvec)}{\wgrad\entqz(\xvec)}.
\end{align}
Substitution of the explicit representations \eqref{eq:gradHFzQz} in the definition \eqref{eq:HS_step1} yields
\begin{align*}
  \Diz(\xvec) = \wspr{\wgrad\Hhz(\xvec)}{\wgrad\entqz(\xvec)} 
  = \frac{\delta}{2}\sum_{k\in\hval}\frac{z_\kappp+z_\kappm}{2}\left(\frac{z_\kappp-z_\kappm}{\delta}\right)^2.
\end{align*}
It remains to define a discrete counterpart for the potential $\pot$.
A change of variables yields in the definition in \eqref{eq:info_tf} yields
\begin{align*}
  \pot(u) = \intom V(x) u(x) \dd x = \intM V(\theX) \dd\xi,
\end{align*}
Thus, a natural discretization $\potz$ of $\pot$ is given by
\begin{align*}
  \potz(\xvec) = \delta\sum_{k\in\ival} V(x_k).
\end{align*}
In summary, our discretization $\DiVz$ of $\DiV$ is
\begin{align*}
  \DiVz(\xvec) = \Diz(\xvec) + \potz(\xvec) = \wspr{\wgrad\Hhz(\xvec)}{\wgrad\entqz(\xvec)} + \potz(\xvec).
\end{align*}
%
%
%

\subsection{Discretization in time}\label{sec:timestepping_tf}
Next, the spatially discrete gradient flow equation
\begin{align}
  \label{eq:sdgradflow_tf}
  \dot{\xvec} = -\wgrad\DiVz(\xvec) 
\end{align}
is discretized also in time, using \emph{minimizing movements}.
To this end, fix a time step with $\tau>0$; 
we combine the spatial and temporal mesh widths in a single discretization parameter $\Delta=(\tau;\theh)$.
For each $\yvec\in\xseqN$, introduce the \emph{Yosida-regularized energy} $\DiVy(\cdot;\yvec):\xseqN\to\setR$ by
\begin{align*}
  \DiVy(\xvec;\yvec) = \frac1{2\tau}\wnrm{\xvec-\yvec}^2+\DiVz(\xvec).
\end{align*}
A fully discrete approximation $\xvec_\Delta=(\xvec_\Delta^0,\xvec_\Delta^1,\ldots,\xvec_\Delta^n,\ldots)$ of \eqref{eq:sdgradflow_tf} is now defined inductively
from a given initial datum $\xvec_\Delta^0$ by choosing each $\xvec_\Delta^n$ 
as a global minimizer of $\DiVy(\cdot;\xvec_\Delta^{n-1})$.
Below, we prove that such a minimizer always exists, see Lemma \ref{lem:cfl}.

In practice, one wishes to define $\xvec_\Delta^n$ as --- preferably unique --- solution 
of the Euler-Lagrange equations associated to $\DiVy(\cdot;\xvec_\Delta^{n-1})$,
which leads to the implicit Euler time stepping:
\begin{align}
  \label{eq:euler_tf}
  \frac{\xvec-\xvec_\Delta^{n-1}}{\tau} = -\wgrad\DiVz(\xvec)
  = -\frac1{\delta^2}\left(\grad^2\Hhz(\xvec)\cdot\grad\entqz(\xvec)+\grad^2\entqz(\xvec)\cdot\grad\Hhz(\xvec)\right).
\end{align}
Using \eqref{eq:gradHFzQz} and \eqref{eq:HessHFzQz}, a straight-forward calculation shows that 
\eqref{eq:euler_tf} is the precisely the numerical scheme \eqref{eq:dgf_tf} from the introduction.
Equivalence of \eqref{eq:euler_tf} and the minimization problem for $\DiVy$ is guaranteed at least for sufficiently small $\tau>0$.
\begin{prp}
  \label{prp:wellposed}
  For each discretization $\Delta$ and every initial condition $\xvec^0\in\xseqN$,
  the sequence of equations \eqref{eq:euler_tf} can be solved inductively.
  Moreover, if $\tau>0$ is sufficiently small with respect to $\delta$ and $\DiVz(\xvec^0)$,
  then each equation \eqref{eq:euler_tf} possesses a unique solution with $\DiVz(\xvec)\le\Diz(\xvec^0)$,
  and that solution is the unique global minimizer of $\DiVy(\cdot;\xvec_\Delta^{n-1})$.
\end{prp}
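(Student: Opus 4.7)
The plan is to treat existence, the a priori energy bound, and uniqueness separately.

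\textbf{Existence of the iterates.} I would apply the direct method of the calculus of variations to the Yosida-regularized energy $\DiVy(\cdot;\yvec)$ on the open cell $\xseqN\subset\setR^{K-1}$, with $\yvec=\xvec_\Delta^{n-1}$. The quadratic penalty $\tfrac{1}{2\tau}\wnrm{\xvec-\yvec}^2$ keeps sublevel sets bounded in $\setR^{K-1}$, and $\DiVz$ diverges as $\xvec$ approaches $\partial\xseqN$: whenever two consecutive $x_k$'s coalesce, the corresponding $z_\kappa$ blows up; because $x_0=a$, $x_K=b$ are fixed and $\delta\sum_\kappa z_\kappa^{-1}=b-a$, not every $z_\kappa$ can diverge simultaneously, so there is always an adjacent pair in which one entry stays bounded, forcing a summand of $\Diz$ of order $z^3/\delta^2$ to explode. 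Sublevel sets are therefore compactly contained in $\xseqN$, and a Weierstrass argument yields a global minimizer $\xvec_\Delta^n\in\xseqN$ of $\DiVy(\cdot;\xvec_\Delta^{n-1})$. The first-order condition $\wgrad\DiVy(\xvec_\Delta^n;\xvec_\Delta^{n-1})=0$, expanded via \eqref{eq:gradHFzQz} and \eqref{eq:HessHFzQz}, is precisely \eqref{eq:euler_tf}, and hence \eqref{eq:dgf_tf}. Induction on $n$ then produces the full sequence.

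\textbf{Energy bound.} Testing the minimization with the trivial competitor $\xvec = \xvec_\Delta^{n-1}$,
\begin{align*}
  \frac{1}{2\tau}\wnrm{\xvec_\Delta^n-\xvec_\Delta^{n-1}}^2 + \DiVz(\xvec_\Delta^n) \le \DiVz(\xvec_\Delta^{n-1}),
\end{align*}
so induction gives $\DiVz(\xvec_\Delta^n)\le\DiVz(\xvec_\Delta^0)$ together with a $\sqrt\tau$-smallness estimate on each step.

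\textbf{Uniqueness for small $\tau$.} The step-size bound just obtained confines any minimizer to the ball $B_n$ of radius $\sqrt{2\tau\,\DiVz(\xvec_\Delta^0)}$ around $\xvec_\Delta^{n-1}$ in the $\wnrm{\cdot}$-norm. For $\tau$ small in terms of $\delta$ and $\DiVz(\xvec_\Delta^0)$, this ball lies comfortably inside $\xseqN$ (the distance from $\xvec_\Delta^{n-1}$ to $\partial\xseqN$ being $\delta/\|\zvec^{n-1}\|_\infty$), the $z$-values on $B_n$ stay in a compact range, and hence the operator norm of $\grad^2\DiVz$ is bounded by some $L=L(\delta,\DiVz(\xvec_\Delta^0))$ on $B_n$. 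Shrinking $\tau$ further so that $\tau\delta L<1$ makes the Hessian of $\DiVy(\cdot;\xvec_\Delta^{n-1})$ strictly positive on $B_n$, uniquely identifying the minimizer. Equivalently, the fixed-point formulation $\xvec = \xvec_\Delta^{n-1}-\tau\,\wgrad\DiVz(\xvec)$ of \eqref{eq:euler_tf} is a contraction on $B_n$. A small additional argument, bounding the displacement of any critical point via the same energy identity, rules out further solutions outside $B_n$.

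\textbf{Main obstacle.} The technical bottleneck is the uniform Hessian bound on $B_n$: since $\grad^2\DiVz$ involves cubic-in-$z$ expressions, one needs a uniform $L^\infty$-bound on $\zvec$ under the energy bound $\DiVz\le\DiVz(\xvec_\Delta^0)$. In one dimension this follows by combining the $H^1$-type control provided by $\Diz$ with the ``harmonic-mean'' constraint $\delta\sum_\kappa z_\kappa^{-1}=b-a$, through a discrete Sobolev embedding. This structural feature of the one-dimensional setting is exactly what keeps the scheme well-posed at finite $\delta$.
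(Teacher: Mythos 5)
Your proposal is correct and follows the same broad strategy as the paper: compactness of sublevel sets in the interior of $\xseqN$, Weierstrass for existence of a minimizer, and strict convexity of $\DiVy(\cdot;\yvec)$ for small $\tau$ to obtain uniqueness. Two places where the routes diverge in detail are worth noting. For compactness, you argue qualitatively by a blow-up analysis at $\partial\xseqN$ (using the harmonic-mean constraint $\delta\sum_\kappa z_\kappa^{-1}=b-a$ to find a transition pair where a term of $\Diz$ explodes), whereas the paper gets the same conclusion quantitatively via a single discrete Cauchy--Schwarz computation showing $z_\kappa \le \delta/(b-a) + (4(b-a)C)^{1/2}$ for all $\xvec$ in the sublevel set $\big(\DiVy(\cdot,\yvec)\big)^{-1}([0,C])$; this is also exactly what resolves the ``main obstacle'' you flag at the end. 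For uniqueness, you localize to a shrinking ball $B_n$ of radius $\propto\sqrt\tau$ around $\xvec_\Delta^{n-1}$ and argue contraction/positive-definite Hessian there, which then requires the additional step (which you acknowledge but do not carry out) of showing that \emph{all} relevant critical points, not just the minimizer, land in $B_n$ --- this follows from the Euler--Lagrange relation $\xvec-\yvec=-\tau\wgrad\DiVz(\xvec)$ together with a bound on $\wnrm{\wgrad\DiVz}$, not from the energy estimate as you suggest. The paper sidesteps this entirely by working on the fixed, $\tau$-independent compact sublevel set $A_C$, observing that the smooth $\DiVz$ is $\lambda_C$-convex there, and concluding that $\DiVy$ is strictly convex on $A_C$ once $\tau<\delta/(-\lambda_C)$; this gives uniqueness of all critical points with $\DiVz\le C$ in one step. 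So: same skeleton, slightly different flesh, and your sketch would need the displacement-of-critical-points argument spelled out to be complete.
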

\begin{rmk}
  In principle, the proof of Lemma \ref{lem:cfl} below provides a criterion on the smallness of $\tau>0$ 
  that would guarantee the unique solvability of \eqref{eq:euler_tf}.
  We shall not make this criterion explicit, 
  since in practice, we observe that the Newton method applied to \eqref{eq:euler_tf} and initialized with $\xvec_\Delta^{n-1}$
  always converges to ``the right'' solution $\xvec_\Delta^n$,
  even for comparatively large steps $\tau$ and in rather degenerate situations;
  we refer the reader to our numerical results in Section \ref{sec:num_tf}.
\end{rmk}
The proof of this proposition is a consequence of the following rather technical lemma.
\begin{lem}
  \label{lem:cfl}
  Fix a spatial discretization parameter $\delta$, and let $C:=\DiV(\xvec^0)$.
  Then for every $\yvec\in\xseqN$ with $\DiVz(\yvec)\le C$, the following are true:
  \begin{itemize}
  \item For each $\tau>0$, 
    the function $\DiVy(\cdot;\yvec)$ possesses at least one global minimizer $\xvec^*\in\xseqN$,
    and that $\xvec^*$ satisfies the Euler-Lagrange equation
    \[ \frac{\xvec^*-\yvec}\tau = -\wgrad\DiVz[\xvec^*]. \]
  \item There exists a $\tau_C>0$ independent of $\yvec$ such that for each $\tau\in(0,\tau_C)$,
    the global minimizer $\xvec^*\in\xseqN$ is strict and unique, 
    and it is the only critical point of $\DiVy(\cdot;\yvec)$ with $\DiVz(\xvec)\le C$.
  \end{itemize}
\end{lem}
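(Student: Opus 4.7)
The plan is to apply the direct method of the calculus of variations on the open set $\xseqN$. The functional $\DiVy(\cdot;\yvec)$ is continuous, and the Yosida term $\frac1{2\tau}\wnrm{\xvec-\yvec}^2$ keeps any minimizing sequence $\xvec^{(m)}$ bounded in $\setR^{K-1}$. Since the endpoints $x_0=a$, $x_K=b$ are fixed, each $z^{(m)}_\kappa$ is automatically bounded below by $\delta/(b-a)$; the only escape to $\partial\xseqN$ to rule out is $\max_\kappa z^{(m)}_\kappa\to\infty$. To exclude it, I would use the $\DiVz$-bound: after dropping the non-negative $V$-term and inserting the lower bound on $\zvec$, the Dirichlet sum yields a discrete $H^1$-type estimate $\sum_k(z_\kph-z_\kmh)^2\le 2(b-a)\,\DiVz(\xvec)$. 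Combined with the identity $\sum_\kappa\delta/z_\kappa = b-a$, which by pigeonhole produces some $\kappa_0$ with $z_{\kappa_0}\le M/(b-a)$, a telescoping argument produces a $\delta$-dependent $\ell^\infty$-bound on $\zvec$ in terms of $\DiVz(\xvec)$. The minimizing sequence therefore lies in a compact subset of $\xseqN$, and the limit $\xvec^*\in\xseqN$ is an interior minimizer, so it automatically satisfies the Euler-Lagrange equation $\wgrad\DiVy(\xvec^*;\yvec)=0$ stated in the first bullet.

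\textbf{Uniqueness (second bullet).} The same a priori bounds supply, on the sublevel set $S_C:=\{\xvec\in\xseqN:\DiVz(\xvec)\le C\}$, two-sided bounds $\delta/(b-a)\le z_\kappa\le C'(\delta,C)$. Inspection of the explicit formulas \eqref{eq:gradHFzQz}, \eqref{eq:HessHFzQz} and of the diagonal Hessian of $\potz$ (which is bounded through $\Lambda=\sup|V_{xx}|$) then gives uniform bounds $\wnrm{\wgrad\DiVz(\xvec)}\le G$ and $\|\grad^2\DiVz(\xvec)\|\le M$ with constants $G,M$ depending only on $\delta$ and $C$. Any critical point $\tilde\xvec\in S_C$ of $\DiVy(\cdot;\yvec)$ satisfies $\tilde\xvec-\yvec=-\tau\,\wgrad\DiVz(\tilde\xvec)$, hence $\wnrm{\tilde\xvec-\yvec}\le\tau G$. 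I then choose $\tau_C>0$ small enough that the closed ball $B$ of radius $\tau_C G$ around $\yvec$ is contained in $\xseqN$---uniformly in $\yvec\in S_C$, which is possible thanks to the two-sided $z$-bounds---and simultaneously small enough that the Yosida Hessian $\frac\delta\tau\mathrm{Id}$ strictly dominates $\grad^2\DiVz$ on $B$. Then $\DiVy(\cdot;\yvec)$ is strictly convex on the convex set $B$, so it admits at most one critical point there. Since every critical point in $S_C$ lies in $B$, and since the global minimizer $\xvec^*$ from the first bullet satisfies $\DiVz(\xvec^*)\le\DiVy(\xvec^*;\yvec)\le\DiVy(\yvec;\yvec)=\DiVz(\yvec)\le C$ and so also lies in $B$, the minimizer is precisely this unique strict critical point, as claimed.

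\textbf{Main obstacle.} The heart of the matter is the coercivity step: converting the bound $\DiVz(\xvec)\le C$ into a uniform upper bound on the $z_\kappa$'s, which prevents minimizing sequences from escaping to $\partial\xseqN$ and, just as importantly, keeps $\wgrad\DiVz$ and $\grad^2\DiVz$ bounded on $S_C$. Once this discrete Sobolev-type $\ell^\infty$-control is in place, deriving the Euler-Lagrange equation at an interior minimizer, bounding the derivatives of $\DiVz$ from the explicit formulas, and invoking strong convexity on a small ball are routine.
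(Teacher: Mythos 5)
Your proof follows the same overall strategy as the paper's: coercivity of $\DiVy(\cdot;\yvec)$ through an $\ell^\infty$-bound on $\zvec$, an interior minimizer yielding the Euler--Lagrange equation, and strict convexity of $\DiVy$ for small $\tau$. The implementations differ in two spots. For the $\ell^\infty$-bound, the paper telescopes from $z_\imh\geq\delta/(b-a)$ and applies Cauchy--Schwarz once, obtaining the $\delta$-uniform bound $z_\kappa \le \delta/(b-a) + (4(b-a)C)^{1/2}$; your pigeonhole-plus-telescoping variant gives a $\delta$-dependent bound, which is equally adequate here since $\delta$ is held fixed. For uniqueness, the paper chooses $\lambda_C$ with $\grad^2\DiVz\geq\lambda_C\eins_{K-1}$ on the $\DiVy$-sublevel set $A_C$ and invokes strict convexity of $\DiVy$ there. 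Your ball argument is actually tighter with respect to what the lemma states: a critical point $\tilde\xvec$ with $\DiVz(\tilde\xvec)\le C$ has $\DiVy(\tilde\xvec;\yvec)=\DiVz(\tilde\xvec)+\tfrac{\tau}{2}\wnrm{\wgrad\DiVz(\tilde\xvec)}^2$, which may exceed $C$, so it need not lie in $A_C$; and the $\DiVz$-sublevel set is not obviously convex, so strict convexity restricted to $A_C$ does not by itself exclude a second critical point with $\DiVz\le C$. By locating \emph{every} such critical point in a convex ball of radius $\tau G$ about $\yvec$ and arranging strict convexity of $\DiVy$ on that ball, you establish uniqueness among precisely the critical points the lemma speaks of, and the $\yvec$-independence of $\tau_C$ drops out explicitly. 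So your proof is correct and, on the uniqueness clause, somewhat more careful than the one in the paper.
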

\begin{proof}
  Fix $\yvec\in\xseqN$ with $\DiVz(\yvec)\leq C$, 
  and define the nonempty (since it contains $\yvec$) sublevel set $A_C:=\big(\DiVy(\cdot,\yvec)\big)^{-1}([0,C])\subset\xseqN$.
  Let $\zvec=\cz_\theh[\xvec]$, and observe that $z_\kappa\ge\delta/(b-a)$ for each $z\in\hval$.
  From here, it follows further that
	\begin{align*}
    z_\kappa - \frac\delta{b-a}  
    &\le \sum_{k\in\ivalp}|z_\kph - z_\kmh| \\ 
    &\le \left(\sum_{k\in\ivalp}\frac{\delta}{z_\kph+z_\kmh}\right)^{\frac12} 
    \left(\delta\sum_{k\in\ivalp}(z_\kph+z_\kmh)\left(\frac{z_\kph- z_\kmh}{\delta}\right)^2\right)^{\frac12}\\
    &\le \big(2(b-a)\big)^{1/2}\DiVz(\xvec)^{1/2} \le (4(b-a)C)^{1/2}.
  \end{align*}
  This implies that the differences $x_\kappp-x_\kappm=\delta/z_\kappa$ have a uniform positive lower bound on $A_C$.
  It follows that $A_C$ is a compact subset in the interior of $\xseqN$.
  Consequently, the continuous function $\DiVy(\cdot;\yvec)$ attains a global minimum at $\xvec^*\in\xseqN$.
  Since $\xvec^*\in A_C$ lies in the interior of $\xvec$, 
  it satisfies $\grad\DiVy(\xvec^*;\yvec)=0$, which is the Euler-Lagrange equation.
  This proves the first claim.
  
  Since $\DiVz:\xseqN\to\setR$ is smooth, 
  its restriction to the compact set $A_C$ is $\lambda_C$-convex with some $\lambda_C\in\setR$,
  i.e., $\grad^2\DiVz(\xvec)\ge\lambda_C\eins_{K-1}$ for all $\xvec\in A_C$.
  Independently of $\yvec$, we have that
  \begin{align*}
    \grad^2\DiVy(\xvec,\yvec) = \grad^2\DiVz(\xvec) + \frac\delta\tau\eins_{K-1},
  \end{align*}
  which means that $\xvec\mapsto\DiVy(\xvec,\yvec)$ is strictly convex on $A_C$
  if
  \begin{align*}
    0< \sigma < \tau_C:=\frac\delta{(-\lambda_C)}.
  \end{align*}
  Consequently, each such $\DiVy(\cdot,\yvec)$ has at most one critical point $\xvec^*$ in the interior of $A_C$, 
  and this $\xvec^*$ is necessarily a strict global minimizer.
\end{proof}


%
\subsection{Spatial interpolations}
Consider a fully discrete solution $\xvec_\Delta=(\xvec_\Delta^0,\xvec_\Delta^1,\ldots)$.
For notational simplification, 
we write the entries of the vectors $\xvec_\Delta^n$ and $\zvec_\Delta^n=\cz_\theh[\xvec_\Delta^n]$ 
as $x_k$ and $z_\kappa$, respectively, whenever there is no ambiguity in the choice of $\Delta$ and the time step $n$.

Recall that $u_\Delta^n=\cf_\theh[\xvec_\Delta^n]\in\densN$ defines a sequence of densitites on $\Omega$
which are piecewise constant with respect to the (non-uniform) grid $(a,x_1,\ldots,x_{K-1},b)$.
To facilitate the study of convergence of weak derivatives, 
we introduce also \emph{piecewise affine} interpolations $\hatz_\Delta^n:\M\to\setRpp$ and $\hatu_\Delta^n:\Omega\to\setRpp$. 

In addition to $\xi_k=k\delta$ for $k\in\ival$, 
introduce the intermediate points $\xi_\kappa=\kappa\delta$ for $\kappa\in\hval$.
Accordingly, introduce the intermediate values for the vectors $\xvec_\Delta^n$ and $\zvec_\Delta^n$:
\begin{align*}
  x_\kappa = \frac12\big(x_\kappp+x_\kappm) \quad \text{for $\kappa\in\hval$}, \\
  z_k = \frac12\big(z_\kph+z_\kmh\big) \quad \text{for $k\in\ivalp$}.
\end{align*}
Now define 
\begin{itemize}
\item $\hatz_\Delta^n:\M\to\setR$ as the piecewise affine interpolation
  of the values $(z_{\frac12},z_{\frac32},\ldots,z_{K-\frac12})$ 
  with respect to the equidistant grid $(\xi_{\frac12},\xi_{\frac32},\ldots,\xi_{K-\frac12})$,
  and
\item $\hatu_\Delta^n:\Omega\to\setR$ as the piecewise affine function with
  \begin{align}
    \label{eq:locaffine0}
    \hatu_\Delta^n\circ\theX_\Delta^n = \hatz_\Delta^n.
  \end{align}
\end{itemize}
Our convention is that 
$\hatz_\Delta^n(\xi)=z_{\frac12}$ for $0\le\xi\le\delta/2$ and $\hatz_\Delta^n(\xi)=z_{K-\frac12}$ for $M-\delta/2\le\xi\le M$,
and accordingly
$\hatu_\Delta^n(x)=z_{\frac12}$ for $x\in[a,x_{\frac12}]$ and $\hatu_\Delta^n(x)=z_{K-\frac12}$ for $x\in[x_{K-\frac12},b]$.
The definitions have been made such that
\begin{align}
  \label{eq:interpol1}
  x_k = \theX_\Delta^n(\xi_k),\quad z_k = \hatz(\xi_k) = \hatu (x_k) \quad \text{for all $k\in\ival\cup\hval$}.
\end{align}
Notice that $\hatu_\Delta^n$ is piecewise affine with respect to the ``double grid'' $(x_0,x_\frac12,x_1,\ldots,x_{K-\frac12},x_K)$,
but in general not with respect to the subgrid $(x_0,x_1,\ldots,x_K)$.
By direct calculation, we obtain for each $k\in\ivalp$ that
\begin{equation}
  \label{eq:tildeux}
  \begin{split}
    \partial_x\hatu_\Delta^n\big|_{(x_\kmh,x_k)}  
    &=\frac{z_k - z_\kmh}{x_k-x_\kmh} = \frac{z_\kph - z_\kmh}{x_k-x_{k-1}} 
    =z_\kmh\frac{z_\kph - z_\kmh}{\delta}, \\
    \partial_x\hatu_\Delta^n\big|_{(x_k,x_\kph)} 
    &= \frac{z_\kph - z_k}{x_\kph-x_k} = \frac{z_\kph - z_\kmh}{x_{k+1}-x_k} 
    = z_\kph\frac{z_\kph - z_\kmh}{\delta}.
  \end{split}
\end{equation}
Trivially, we also have that $\partial_x\hatu$ vanishes identically on the intervals $(a,x_\imh)$ and $(x_\Kmh,b)$.

\subsection{A discrete Sobolev-type estimate}
The following inequality plays a key role in our analysis.
Recall the conventions that $z_{-\imh} = z_{\imh}$, $z_\Kph=z_\kmh$, and that $z_k=\frac12(z_\kph+z_\kmh)$.
\begin{lem}\label{lem:L4D2_tf}
  For any $\xvec\in\xseqN$,
  \begin{align}\label{eq:L4D2_tf}
    \delta\sum_{k\in\ivalp} z_k\left(\frac{z_\kph - z_\kmh}{\delta}\right)^4 
    \leq \frac{9}{4} \delta\sum_{\kappa\in\hval} z_\kappa^3\left(\frac{z_{\kappa+1}-2z_\kappa+z_{\kappa-1}}{\delta^2}\right)^2.
  \end{align}
\end{lem}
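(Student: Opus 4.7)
The plan is to mimic, on the discrete side, the following one-line continuous argument: by two integrations by parts and Cauchy--Schwarz,
\[ \intom u u_x^4 \dd x = \frac12\intom (u^2)_x u_x^3 \dd x = -\frac32 \intom u^2 u_x^2 u_{xx} \dd x \le \frac32 \Big(\intom u u_x^4 \dd x\Big)^{1/2}\Big(\intom u^3 u_{xx}^2 \dd x\Big)^{1/2}, \]
which, after squaring and dividing, produces exactly $\intom u u_x^4 \dd x \le \tfrac94 \intom u^3 u_{xx}^2 \dd x$. Every step has a clean discrete counterpart and the sharp constant $\tfrac94$ will survive intact.

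For the setup, abbreviate $p_k := (z_\kph-z_\kmh)/\delta$ for $k\in\ivalp$, with the convention $p_0 := p_K := 0$ inherited from $z_{-\imh}=z_\imh$ and $z_\Kph=z_\Kmh$, and let $q_\kappa := (z_{\kappa+1}-2z_\kappa+z_{\kappa-1})/\delta^2$ for $\kappa\in\hval$. The left-hand side of \eqref{eq:L4D2_tf} then reads $S := \delta\sum_{k\in\ivalp} z_k p_k^4$. The identity $z_\kph^2 - z_\kmh^2 = (z_\kph+z_\kmh)(z_\kph-z_\kmh) = 2\delta z_k p_k$ is a perfect discrete chain rule for squares; using it to extract one factor $z_k p_k$ and then applying Abel summation (boundary terms vanish thanks to $p_0 = p_K = 0$) yields
\[ S = -\frac{\delta}{2}\sum_{\kappa\in\hval} z_\kappa^2 \cdot \frac{p_{\kappa+1/2}^3 - p_{\kappa-1/2}^3}{\delta}. \]

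Next, factor $p_{\kappa+1/2}^3 - p_{\kappa-1/2}^3 = (p_{\kappa+1/2} - p_{\kappa-1/2})(p_{\kappa+1/2}^2 + p_{\kappa+1/2}p_{\kappa-1/2} + p_{\kappa-1/2}^2)$, identify $(p_{\kappa+1/2}-p_{\kappa-1/2})/\delta = q_\kappa$ (one must check this separately at $\kappa=\tfrac12$ and $\kappa=K-\tfrac12$ using the boundary convention), and use $a^2+ab+b^2 \le \tfrac32(a^2+b^2)$ to obtain
\[ S \le \frac{3\delta}{4}\sum_{\kappa\in\hval} z_\kappa^2 |q_\kappa|\,(p_{\kappa+1/2}^2 + p_{\kappa-1/2}^2). \]
Applying Cauchy--Schwarz separately to each of the two halves, pairing $z_\kappa^{3/2}|q_\kappa|$ with $z_\kappa^{1/2}p_{\kappa\pm1/2}^2$, and then combining via $\sqrt{a}+\sqrt{b}\le\sqrt{2(a+b)}$, the estimate reduces to the self-referential inequality
\[ S \le \tfrac{3}{2}\, \sqrt{S}\cdot\Big(\delta\sum_{\kappa\in\hval} z_\kappa^3 q_\kappa^2\Big)^{1/2}, \]
once one recognises the re-indexing identity $\sum_\kappa z_\kappa(p_{\kappa+1/2}^4+p_{\kappa-1/2}^4) = \sum_{k\in\ivalp}(z_\kmh+z_\kph)\,p_k^4 = 2\sum_k z_k p_k^4 = 2S/\delta$, which holds by the definition $z_k = \tfrac12(z_\kph+z_\kmh)$ and the vanishing of the boundary $p$'s. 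Squaring and dividing by $S$ then delivers \eqref{eq:L4D2_tf}.

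The only genuine obstacle is the bookkeeping with half-integer versus integer indices, and making sure the boundary conventions $z_{-\imh}=z_\imh$, $z_\Kph=z_\Kmh$ (equivalently, $p_0=p_K=0$) are applied consistently so that no boundary contributions are silently dropped or double-counted when switching between sums indexed by $\ivalp$ and by $\hval$. The overall pattern of estimates is entirely dictated by the continuous prototype; all inequalities invoked (the discrete chain rule, Abel summation, Cauchy--Schwarz, and $a^2+ab+b^2\le\tfrac32(a^2+b^2)$) are sharp in the regime where $p_{\kappa+1/2}\approx p_{\kappa-1/2}$, which is precisely why the constant $\tfrac94$ is preserved.
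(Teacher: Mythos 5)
Your proof is correct and is essentially the same argument the paper uses: summation by parts, the factorization $a^3-b^3=(a-b)(a^2+ab+b^2)$, the bound $a^2+ab+b^2\le\tfrac32(a^2+b^2)$, Cauchy--Schwarz, and the re-indexing $\sum_\kappa z_\kappa(p_{\kappa+1/2}^4+p_{\kappa-1/2}^4)=2\sum_k z_k p_k^4$ to close the self-referential estimate. The only cosmetic differences are that you invoke the discrete chain rule $z_\kph^2-z_\kmh^2=2\delta z_k p_k$ up front so that Abel summation delivers the identity $S=-\tfrac12\sum_\kappa z_\kappa^2(p_{\kappa+1/2}^3-p_{\kappa-1/2}^3)$ in one step (the paper reaches the same identity by first pulling out a $-(A)$ term and rearranging), and that you apply Cauchy--Schwarz to the two halves separately and recombine via $\sqrt a+\sqrt b\le\sqrt{2(a+b)}$ rather than squaring $(a^2+b^2)$ afterwards --- these are the same inequality used at different stages and give the identical constant.
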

\begin{proof}
  Define the left-hand side in \eqref{eq:L4D2_tf} as $(A)$.
  Then:
  \begin{align*}
    (A)&=\delta^{-3}\sum_{k\in\ivalp} z_k(z_\kph - z_\kmh)^3(z_\kph - z_\kmh) \\
    &= \delta^{-3}\sum_{k=1}^K z_\kmh\left[z_{k-1}(z_\kmh-z_\kmd)^3 - z_k(z_\kph-z_\kmh)^3\right] \\ 
    &=\frac{\delta^{-3}}{2}\sum_{k=1}^K z_\kmh\left[(z_\kmh+z_\kmd)(z_\kmh-z_\kmd)^3 - (z_\kph+z_\kmh)(z_\kph-z_\kmh)^3\right] \\
    &=\frac{\delta^{-3}}{2}\sum_{k=1}^K z_\kmh\Big[(z_\kmd-z_\kmh)(z_\kmh-z_\kmd)^3 + (z_\kmh-z_\kph)(z_\kph-z_\kmh)^3 \\
    &\qquad\qquad\qquad\qquad			+ 2z_\kmh(z_\kmh-z_\kmd)^3 - 2z_\kmh(z_\kph-z_\kmh)^3 \Big].
  \end{align*}
  Rearranging terms yields
  \begin{equation*}
    \begin{split}
      (A) &= -(A) -\delta^{-3}\sum_{k=1}^K z_\kmh^2\left[(z_\kmh-z_\kmd)^3 - (z_\kph-z_\kmh)^3\right]
	\end{split}\end{equation*}
	and further using the identity $(a^3-b^3)=(a-b)(a^2+b^2+ab)$,
	\begin{equation*}
    \begin{split}
      (A) &= -\frac{\delta^{-3}}{2}\sum_{k=1}^K z_\kmh^2\left[(z_\kmh-z_\kmd)^3 - (z_\kph-z_\kmh)^3\right] \\
      &=-\frac{\delta^{-1}}{2}\sum_{k=1}^K z_\kmh^2[\D^2 z]_\kmh\Big[(z_\kmh-z_\kmd)^2 + (z_\kph-z_\kmh)^2 \\
					&\qquad\qquad\qquad\qquad \qquad\qquad\quad+ (z_\kmh-z_\kmd)(z_\kph-z_\kmh)\Big].
    \end{split}
  \end{equation*}
  Invoke H\"older's inequality and the elementary estimate $ab\leq\frac{1}{2}(a^2+b^2)$ to conclude that
  \begin{equation}
    \begin{split}
      (A) &\leq 
      \frac{1}{2}\left(\delta\sum_{\kappa\in\hval} z_\kappa^3[\D^2 z]_\kappa^2\right)^{\frac{1}{2}}
      \left(\delta^{-3}\sum_{k=1}^K z_\kmh\frac{9}{4}\left[(z_\kmh-z_\kmd)^2 + (z_\kph-z_\kmh)^2\right]^2\right)^{\frac{1}{2}} \\
      &\leq \frac{3}{2}\left(\delta\sum_{\kappa\in\hval} z_\kappa^3[\D^2 z]_\kappa^2\right)^{\frac{1}{2}}
      \left(\delta^{-3}\sum_{k=1}^K \frac{z_\kmh}{2}\left[(z_\kmh-z_\kmd)^4 + (z_\kph-z_\kmh)^4\right]\right)^{\frac{1}{2}} \\
      &= \frac{3}{2}\left(\delta\sum_{\kappa\in\hval} z_\kappa^3[\D^2 z]_\kappa^2\right)^{\frac{1}{2}} (A)^{\frac{1}{2}},
    \end{split}
  \end{equation}
  where we have used an index shift and the conventions $z_{-\imh} = z_{\imh}$, $z_\Kph=z_\kmh$ in the last step
\end{proof}

\section{A priori estimates and compactness}\label{sec:apriori}
%
%
\subsection{Energy and entropy dissipation}
Fix some discretization parameters $\Delta=(\tau;\theh)$.
Below, we derive a priori bounds on fully discrete solutions $(\xvec_\Delta^n)_{n=0}^\infty$ 
that are independent of $\Delta$.
Specifically, we shall prove two essential estimates: 
the first one is monotonicity of the energy $\DiVz$, 
the second one is obtained from the dissipation of the auxiliary Lyapunov functional $\Hhz$.
We begin with the classical energy estimate.
\begin{prp}
  \label{prp:energy}
  One has that $\DiVz$ is monotone, i.e., $\DiVz(\xvec_\Delta^n)\le\DiVz(\xvec_\Delta^{n-1})$, 
  and further:
  \begin{align}
    &\DiVz(\xvec_\Delta^n) \leq \DiVz(\xvec_\Delta^0) \quad\textnormal{for all }n\geq 0, \label{eq:monoton}\\
    &\|\xvec_\Delta^{\overline n} - \xvec_\Delta^{\underline n}\|_\delta^2
    \leq 2\DiVz(\xvec_\Delta^0)\,(\overline n-\underline n)\tau
    \quad\textnormal{for all }\overline n\geq\underline n\geq 0, \label{eq:uniform_time_tf} 
    \\
    &\tau\sum_{n=1}^\infty\wnrm{\frac{\xvec_\Delta^n-\xvec_\Delta^{n-1}}\tau}^2
    = \tau \sum_{n=1}^\infty\wnrm{\wgrad\DiVz(\xvec_\Delta^n)}^2
    \le 2\DiVz(\xvec_\Delta^0). \label{eq:eee_tf}
  \end{align}
\end{prp}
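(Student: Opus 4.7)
The plan is to follow the classical energy-estimate template for minimizing movements, exploiting that by construction $\xvec_\Delta^n$ is a global minimizer of $\DiVy(\cdot;\xvec_\Delta^{n-1}) = \frac1{2\tau}\wnrm{\cdot-\xvec_\Delta^{n-1}}^2 + \DiVz(\cdot)$. The whole argument rests on a single one-step inequality obtained by comparing $\xvec_\Delta^n$ against the competitor $\xvec_\Delta^{n-1}$ in the minimization problem:
\begin{align*}
  \frac1{2\tau}\wnrm{\xvec_\Delta^n-\xvec_\Delta^{n-1}}^2 + \DiVz(\xvec_\Delta^n) \;\le\; \DiVy(\xvec_\Delta^{n-1};\xvec_\Delta^{n-1}) = \DiVz(\xvec_\Delta^{n-1}).
\end{align*}
Dropping the non-negative first term yields monotonicity, and iteration delivers \eqref{eq:monoton}.

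For the Hölder-like estimate \eqref{eq:uniform_time_tf}, first I would telescope the one-step inequality from $n=\underline n+1$ to $\overline n$ to obtain
\begin{align*}
  \sum_{n=\underline n+1}^{\overline n}\wnrm{\xvec_\Delta^n-\xvec_\Delta^{n-1}}^2 \;\le\; 2\tau\big(\DiVz(\xvec_\Delta^{\underline n})-\DiVz(\xvec_\Delta^{\overline n})\big) \;\le\; 2\tau\,\DiVz(\xvec_\Delta^0),
\end{align*}
where the last step uses \eqref{eq:monoton} together with $\DiVz\ge 0$ (which follows from $\Diz\ge 0$ and $V\ge 0$). Then I would insert a telescoping sum inside the norm, apply the triangle inequality followed by Cauchy--Schwarz with $(\overline n-\underline n)$ unit weights:
\begin{align*}
  \wnrm{\xvec_\Delta^{\overline n}-\xvec_\Delta^{\underline n}}^2
  \le (\overline n-\underline n)\sum_{n=\underline n+1}^{\overline n}\wnrm{\xvec_\Delta^n-\xvec_\Delta^{n-1}}^2
  \le 2\DiVz(\xvec_\Delta^0)\,(\overline n-\underline n)\tau,
\end{align*}
which is exactly \eqref{eq:uniform_time_tf}.

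For the energy--energy--entropy estimate \eqref{eq:eee_tf}, the equality is immediate from the Euler--Lagrange equation furnished by Lemma \ref{lem:cfl}, namely $(\xvec_\Delta^n-\xvec_\Delta^{n-1})/\tau = -\wgrad\DiVz(\xvec_\Delta^n)$, which gives $\wnrm{(\xvec_\Delta^n-\xvec_\Delta^{n-1})/\tau}^2=\wnrm{\wgrad\DiVz(\xvec_\Delta^n)}^2$ term by term. To bound the sum, I would rewrite the one-step inequality as
\begin{align*}
  \tau\wnrm{\frac{\xvec_\Delta^n-\xvec_\Delta^{n-1}}{\tau}}^2 = \frac1\tau\wnrm{\xvec_\Delta^n-\xvec_\Delta^{n-1}}^2 \le 2\big(\DiVz(\xvec_\Delta^{n-1})-\DiVz(\xvec_\Delta^n)\big),
\end{align*}
and let $n$ run from $1$ to any $N$; the right-hand side telescopes to $2(\DiVz(\xvec_\Delta^0)-\DiVz(\xvec_\Delta^N))\le 2\DiVz(\xvec_\Delta^0)$, again by $\DiVz\ge 0$. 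Taking $N\to\infty$ produces \eqref{eq:eee_tf}.

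Honestly, I do not anticipate any substantive obstacle: the argument is the textbook De\,Giorgi minimizing-movement bookkeeping and uses none of the fine structure of $\DiVz$ beyond non-negativity and the variational characterization of $\xvec_\Delta^n$. The only point that deserves a line of justification is non-negativity of $\DiVz$, which follows since $V\ge0$ by \eqref{eq:Vhypo} and $\Diz(\xvec)$ is a sum of manifestly non-negative terms in its explicit representation. Existence of the minimizer at each step — the one genuinely non-trivial ingredient — has already been secured by Lemma \ref{lem:cfl}.
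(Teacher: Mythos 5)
Your proposal is correct and follows the same route as the paper: the one-step comparison inequality $\frac1{2\tau}\wnrm{\xvec_\Delta^n-\xvec_\Delta^{n-1}}^2+\DiVz(\xvec_\Delta^n)\le\DiVz(\xvec_\Delta^{n-1})$, telescoping, Cauchy--Schwarz (the paper phrases it as Jensen) for the H\"older-type bound, and the Euler--Lagrange equation for the equality in \eqref{eq:eee_tf}. You usefully make explicit the nonnegativity of $\DiVz$, which the paper uses implicitly when it bounds $\DiVz(\xvec_\Delta^{\underline n})-\DiVz(\xvec_\Delta^{\overline n})$ by $\DiVz(\xvec_\Delta^0)$.
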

\begin{proof}
  The monotonicity \eqref{eq:dissipation_tf} follows (by induction on $n$) from the definition of $\xvec_\Delta^n$ 
  as minimizer of $\DiVy(\cdot;\xvec_\Delta^{n-1})$:
  \begin{align}
    \label{eq:emon}
    \DiVz(\xvec_\Delta^n) &\le \frac1{2\tau}\|\xvec_\Delta^n-\xvec_\Delta^{n-1}\|_\delta^2 + \DiVz(\xvec_\Delta^n) 
    =\DiVy(\xvec_\Delta^n;\xvec_\Delta^{n-1}) \le \DiVy(\xvec_\Delta^{n-1};\xvec_\Delta^{n-1}) = \DiVz(\xvec_\Delta^{n-1}).
  \end{align}
  Moreover, summation of these inequalities from $n=\underline n+1$ to $n=\overline n$ yields
  \begin{align*}
    \frac\tau2\sum_{n=\underline n+1}^{\overline n} \bigg[\frac{\|\xvec_\Delta^n-\xvec_\Delta^{n-1}\|_\delta}{\tau}\bigg]^2
    \le \DiVz(\xvec_\Delta^{\underline n})-\DiVz(\xvec_\Delta^{\overline n}) \le \DiVz(\xvec_\Delta^0).
  \end{align*}
  For $\underline n=0$ and $\overline n\to\infty$, we obtain the first part of \eqref{eq:eee_tf}.
  The second part follows by \eqref{eq:euler_tf}.
  If instead we combine the estimate with Jensen's inequality,
  we obtain
  \begin{align*}
    \big\|\xvec_\Delta^{\overline n}-\xvec_\Delta^{\underline n}\big\|_\delta
    \le \tau\sum_{n=\underline n+1}^{\overline n}\frac{\big\|\xvec_\Delta^n-\xvec_\Delta^{n-1}\big\|_\delta}{\tau}
    \le \bigg(\tau\sum_{n=\underline n+1}^{\overline n} \bigg[\frac{\|\xvec_\Delta^n-\xvec_\Delta^{n-1}\|_\delta}{\tau}\bigg]^2\bigg)^{1/2}
    \big(\tau(\overline n-\underline n)\big)^{1/2},
  \end{align*}
  which leads to \eqref{eq:uniform_time_tf}.
\end{proof}
The previous estimates were completely general.
The following estimate is very particular for the problem at hand.
\begin{lem}\label{lem:dissipation_tf}
  One has that $\Hhz$ is monotone, i.e., $\Hhz(\xvec_\Delta^n)\le\Hhz(\xvec_\Delta^{n-1})$.
  Moreover, it holds for any $T>0$ that
  \begin{align}\label{eq:dissipation_tf}
    \tau\sum_{n=0}^N\delta\sum_{\kappa\in\hval}(z_\kappa^n)^3 \left(\frac{z_{\kappa+1}-2z_\kappa+z_{\kappa-1}}{\delta^2}\right)^2
    \le 4\big(\olHh + \Lambda M(T+1)\big),
  \end{align}
  for each $\Nt\in\setN$ with $\Nt\tau\in(T,T+1)$.
\end{lem}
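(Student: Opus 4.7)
The plan is to derive a one-step entropy-dissipation inequality, sum it telescopically, and extract the desired bound. The starting point is the convexity of $\Hhz$ (from the preceding lemma), which yields
\begin{align*}
\Hhz(\xvec_\Delta^n) - \Hhz(\xvec_\Delta^{n-1}) \leq \wspr{\wgrad\Hhz(\xvec_\Delta^n)}{\xvec_\Delta^n - \xvec_\Delta^{n-1}} = -\tau\,\wspr{\wgrad\Hhz(\xvec_\Delta^n)}{\wgrad\DiVz(\xvec_\Delta^n)},
\end{align*}
where the second equality uses the Euler--Lagrange equation \eqref{eq:euler_tf}. Splitting $\DiVz = \Diz + \potz$, the task reduces to bounding $\wspr{\wgrad\Hhz}{\wgrad\Diz}$ from below by the target dissipation, and $\wspr{\wgrad\Hhz}{\wgrad\potz}$ from above by a $\Lambda M$-type quantity.

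The main obstacle lies in the first scalar product, and this is precisely where the specific choice of discretization \eqref{eq:dgf_tf} pays off. Since $\Diz = \wspr{\wgrad\Hhz}{\wgrad\entqz}$, differentiation and substitution of the explicit formulas \eqref{eq:gradHFzQz}--\eqref{eq:HessHFzQz} reduce the scalar product to two sums quadratic in $[\D^2 z]_\kappa$ and $[\D^2 z^2]_\kappa$. Expanding $[\D^2 z^2]_\kappa = 2 z_\kappa [\D^2 z]_\kappa + \delta^{-2}\bigl((z_{\kappa+1}-z_\kappa)^2+(z_{\kappa-1}-z_\kappa)^2\bigr)$ separates a ``good'' contribution $\delta\sum_\kappa z_\kappa^3 [\D^2 z]_\kappa^2$, which matches the left side of \eqref{eq:dissipation_tf}, from a sign-indefinite remainder. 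Controlling this remainder is the crux: Cauchy--Schwarz followed by reindexing and the discrete Sobolev-type estimate \eqref{eq:L4D2_tf} bounds its absolute value by $\tfrac{3}{4}\delta\sum_\kappa z_\kappa^3 [\D^2 z]_\kappa^2$, leaving
\begin{align*}
\wspr{\wgrad\Diz(\xvec)}{\wgrad\Hhz(\xvec)} \geq \frac{\delta}{4}\sum_\kappa z_\kappa^3 [\D^2 z]_\kappa^2.
\end{align*}

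The potential contribution is $\wspr{\wgrad\Hhz}{\wgrad\potz} = \sum_{k=1}^{K-1} V_x(x_k)(z_{k+1/2}-z_{k-1/2})$; a discrete summation by parts recasts this as $-\sum_\kappa z_\kappa\bigl(V_x(x_\kappp)-V_x(x_\kappm)\bigr)$ plus boundary terms. The Lipschitz bound $|V_x(x_\kappp)-V_x(x_\kappm)| \leq \Lambda(x_\kappp-x_\kappm) = \Lambda\delta/z_\kappa$ then yields $|\wspr{\wgrad\Hhz}{\wgrad\potz}| \leq \Lambda M$ up to the finitely many boundary terms, which remain uniformly bounded via the $L^\infty$-control on $z_\kappa$ at the endpoints provided by Proposition \ref{prp:energy}. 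Summing the one-step inequality from $n=1$ to $N_\tau$ (so that $N_\tau\tau \leq T+1$), using $\Hhz(\xvec_\Delta^0)\leq\olHh$ together with the $\Delta$-independent lower bound $\Hhz\geq M\log(M/(b-a))$ (absorbable into the $\Lambda M(T+1)$ term), produces \eqref{eq:dissipation_tf} with the asserted constant~$4$. The monotonicity assertion $\Hhz(\xvec_\Delta^n)\leq \Hhz(\xvec_\Delta^{n-1})$ follows from the same one-step analysis whenever the good dissipation dominates the potential contribution.
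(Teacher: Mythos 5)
Your argument follows the paper's proof step by step: convexity of $\Hhz$ together with the Euler--Lagrange relation \eqref{eq:euler_tf} give the one-step inequality, the splitting $\DiVz=\Diz+\potz$ and the expansion of $[\D^2 z^2]_\kappa$ isolate the dissipation term, and Lemma \ref{lem:L4D2_tf} controls the remainder so as to reach the same lower bound $\wspr{\wgrad\Diz}{\wgrad\Hhz}\ge\tfrac{\delta}{4}\sum_\kappa z_\kappa^3[\D^2 z]_\kappa^2$, while the potential term is handled via the Lipschitz bound on $V_x$. The only cosmetic difference is that you control the sign-indefinite remainder with Cauchy--Schwarz where the paper uses the weighted Young inequality $2ab\ge-\tfrac32a^2-\tfrac23b^2$; both routes yield the identical constant.
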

\begin{proof}
  Convexity of $\Hhz$ implies that
  \begin{align*}
    \Hhz(\xvec_\Delta^{n-1}) - \Hhz(\xvec_\Delta^{n}) 
    \ge \wspr{\wgrad\Hhz(\xvec_\Delta^n)}{\xvec_\Delta^{n-1}-\xvec_\Delta^n}
    = \tau\wspr{\wgrad\Hhz(\xvec_\Delta^n)}{\wgrad\DiVz(\xvec_\Delta^n)},
  \end{align*}
  for each $n=1,\ldots,\Nt$.
  Summation of these inequalities over $n$ yield
  \begin{align}
    \label{eq:help963}
    \tau\sum_{n=1}^{\Nt}\wspr{\wgrad\Hhz(\xvec_\Delta^n)}{\wgrad\DiVz(\xvec_\Delta^n)}
    \le \Hhz(\xvec_\Delta^0)-\Hhz(\xvec_\Delta^N).
  \end{align}
  To estimate the right-hand side in \eqref{eq:help963},
  observe that $\Hhz(\xvec_\Delta^0)\le\olHh$ by hypothesis,
  and that $\Hhz(\xvec_\Delta^N)$ is bounded from below thanks to 
	the convexity of $s\mapsto s\ln(s)$ and Jensen's inequality,
	which yieds for any $\xvec\in\xseqN$
	\begin{align*}
		\Hhz(\xvec)=\intom\cf_\thep[\xvec]\ln\cf_\thep[\xvec]\dd x \geq M\ln\left(\frac{M}{b-a}\right).
	\end{align*}

  We turn to estimate the left-hand side in \eqref{eq:help963} from below.
  Recall that $\DiVz=\Diz+\potz$.
  For the component corresponding to $\potz$, we find, using \eqref{eq:gradHFzQz} and \eqref{eq:Vhypo},
  \begin{align*}
    \wspr{\wgrad\Hhz(\xvec_\Delta^n)}{\wgrad\potz(\xvec_\Delta^n)}
    &= \delta\sum_{\kappa\in\hval}z_\kappa \frac{V_x(x_\kappm)-V_x(x_\kappp)}{\delta} \\
    &\ge \left(\inf_{x\in\setR}V_{xx}(x)\right)\, \delta\sum_{\kappa\in\hval}z_\kappa\frac{x_\kappm-x_\kappp}{\delta}
    \ge -\Lambda M.
  \end{align*}
  The component corresponding to $\Diz$ is more difficult to estimate.
  Thanks to \eqref{eq:gradHFzQz}\&\eqref{eq:HessHFzQz},
  we have that
  \begin{align*}
    &4\wspr{\wgrad\Diz(\xvec)}{\wgrad\Hhz(\xvec)}  \\
    &= 4\wspr{\wgrad\Hhz(\xvec)}{\wgrad^2\entqz(\xvec)\wgrad\Hhz(\xvec)} + 4\wspr{\wgrad\entqz(\xvec)}{\wgrad^2\Hhz(\xvec)\wgrad\Hhz(\xvec)} \\
    &= 2\delta\sum_{\kappa\in\hval} z_\kappa^3 \left(\frac{z_{\kappa+1}-2z_\kappa+z_{\kappa-1}}{\delta^2}\right)^2
    +\delta\sum_{\kappa\in\hval} z_\kappa^2 \left(\frac{z_{\kappa+1}-2z_\kappa+z_{\kappa-1}}{\delta^2}\right)\left(\frac{z_{\kappa+1}^2-2z_\kappa^2 + z_{\kappa-1}^2}{\delta^2}\right).
  \end{align*}
  Further estimates are needed to control the second sum from below.
  Observing that
  \begin{align*}
    \frac{z_{\kappa+1}^2-2z_\kappa^2 + z_{\kappa-1}^2}{\delta^2}
    = 2z_\kappa\frac{z_{\kappa+1}-2z_\kappa+z_{\kappa-1}}{\delta^2} 
    + \left(\frac{z_{\kappa+1}-z_\kappa}\delta\right)^2 + \left(\frac{z_{\kappa-1}-z_\kappa}\delta\right)^2,
  \end{align*}
  and that $2ab\ge-\frac32a^2-\frac23b^2$ for arbitrary real numbers $a,\,b$,
  we conclude that
  \begin{align*}
    &4\wspr{\wgrad\Diz(\xvec)}{\wgrad\Hhz(\xvec)}  \\
    &\ge \left(4-\frac32\right)\delta\sum_{\kappa\in\hval} z_\kappa^3 \left(\frac{z_{\kappa+1}-2z_\kappa+z_{\kappa-1}}{\delta^2}\right)^2
    - \frac{2\delta}3\sum_{\kappa\in\ival} z_k \left(\frac{z_\kph-z_\kmh}{\delta}\right)^4.
  \end{align*}
  Now apply inequality \eqref{eq:L4D2_tf}.
\end{proof}

\subsection{Compactness}
The following lemma contains the key estimate to derive compactness of fully discrete solutions in the limit $\Delta\to0$.
Below, we prove that from the entropy dissipation \eqref{eq:dissipation_tf},
we obtain a control on the total variation of $\partial_x\hatu_\Delta^n$.

Several equivalent definitions of the \emph{total variation} of $f\in L^1(\Omega)$ exist.
In case of piecewise smooth functions with jump discontinuities, the most appropriate definition is
\begin{align}
  \label{eq:defTVL_tf}
  \tv{f} = \sup\left\{ \sum_{j=0}^{J-1} |f(r_{j+1})-f(r_{j})|\,:\,
    J\in\setN,\, a<r_0<r_2<\cdots<r_J<b\right\}.
\end{align}
Further recall the notation
\begin{align*}
  \llbracket f\rrbracket_{\bar x} = \lim_{x\downarrow\bar x}f(x) - \lim_{x\uparrow\bar x}f(x).
\end{align*}
for the height of the jump in $f(x)$'s value at $x=\bar x$. 
%
\begin{prp}
  \label{prp:BVbound_tf}
  For any $T>0$ and $\Nt\in\setN$ with $\tau\Nt\in(T,T+1)$, one has 
  \begin{align}
    \label{eq:BVbound_tf}
    \tau\sum_{n=1}^\Nt \tv{\partial_x\hatu_\Delta^n}^2 \le \check C_T .
  \end{align}
\end{prp}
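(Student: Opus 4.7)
The plan is to compute $\tv{\partial_x\hatu_\Delta^n}$ explicitly from~\eqref{eq:tildeux} as a finite sum of jump sizes, split it into two pieces according to whether the jump sits at an integer or half-integer node, bound the square of each piece by the fourth-order quantity $\delta\sum_\kappa z_\kappa^3[\D^2\zvec]_\kappa^2$ by means of Cauchy--Schwarz and the discrete Sobolev-type estimate in Lemma~\ref{lem:L4D2_tf}, and then sum in time via~\eqref{eq:dissipation_tf}.

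First I would observe that, by~\eqref{eq:tildeux}, $\partial_x\hatu_\Delta^n$ is piecewise constant on the double grid $(x_0,x_\imh,x_1,\ldots,x_\Kmh,x_K)$, so its total variation equals the sum of the absolute values of its jumps. At an integer node $x_k$ with $k\in\ivalp$, the two adjacent pieces share the factor $(z_\kph-z_\kmh)/\delta$, hence the jump is $(z_\kph-z_\kmh)^2/\delta$. At a half-integer node $x_\kappa$ with $\kappa\in\hval$, the common factor is $z_\kappa$; thanks to the conventions $z_{-\imh}=z_\imh$ and $z_\Kph=z_\Kmh$, which correctly subsume the jumps at $x_\imh$ and $x_\Kmh$ (where one side of $\partial_x\hatu_\Delta^n$ is zero), the jump equals $z_\kappa\,\delta\,\bigl|[\D^2\zvec]_\kappa\bigr|$. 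Consequently,
\[
\tv{\partial_x\hatu_\Delta^n}=S_1^n+S_2^n,\quad S_1^n:=\sum_{k\in\ivalp}\frac{(z_\kph^n-z_\kmh^n)^2}{\delta},\quad S_2^n:=\delta\sum_{\kappa\in\hval}z_\kappa^n\,\bigl|[\D^2\zvec^n]_\kappa\bigr|.
\]

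The backbone of the Cauchy--Schwarz estimates for $(S_1^n)^2$ and $(S_2^n)^2$ is the identity $\sum_\kappa\delta/z_\kappa=\sum_\kappa(x_\kappp-x_\kappm)=b-a$, and, via the AM--HM inequality applied to $z_k=\tfrac12(z_\kph+z_\kmh)$, also $\sum_{k\in\ivalp}\delta/z_k\le b-a$. With the splitting $z_\kappa=z_\kappa^{3/2}\cdot z_\kappa^{-1/2}$, Cauchy--Schwarz gives directly $(S_2^n)^2\le(b-a)\,\delta\sum_\kappa(z_\kappa^n)^3[\D^2\zvec^n]_\kappa^2$. For $S_1^n$, I would split each summand as $\delta\cdot z_k^{1/2}\bigl((z_\kph-z_\kmh)/\delta\bigr)^2\cdot z_k^{-1/2}$ and apply Cauchy--Schwarz once more to obtain $(S_1^n)^2$ bounded by the product of $\delta\sum_k z_k^n\bigl((z_\kph^n-z_\kmh^n)/\delta\bigr)^4$ and $\sum_k\delta/z_k^n$; the first factor is controlled by $\tfrac{9}{4}\delta\sum_\kappa(z_\kappa^n)^3[\D^2\zvec^n]_\kappa^2$ through Lemma~\ref{lem:L4D2_tf}. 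Combining $\tv{\partial_x\hatu_\Delta^n}^2\le2(S_1^n)^2+2(S_2^n)^2$ with these two estimates, multiplying by $\tau$, summing over $n=1,\ldots,\Nt$, and invoking the entropy-dissipation bound~\eqref{eq:dissipation_tf} yields the claim with a constant $\check C_T$ proportional to $(b-a)\bigl(\olHh+\Lambda M(T+1)\bigr)$.

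I expect the main obstacle to be the integer-node piece $S_1^n$: its summands $(z_\kph-z_\kmh)^2/\delta$ form an $L^2$-like sum in the discrete $\xi$-derivative of $Z$, a quantity which is not directly dissipated along the flow. The key idea is to interpolate it through the weighted $L^4$ quantity $\delta\sum_k z_k\bigl((z_\kph-z_\kmh)/\delta\bigr)^4$ --- which is accessible precisely because the mild weight $1/z_k$ sums to at most $b-a$ --- and only then invoke the discrete Sobolev estimate Lemma~\ref{lem:L4D2_tf} that links it to the entropy-dissipation integrand. The half-integer piece $S_2^n$ and the final time-summation step are straightforward in comparison.
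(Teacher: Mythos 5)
Your proof is correct and follows essentially the same route as the paper: decompose the total variation into jumps at integer and half-integer nodes, apply Cauchy--Schwarz against the weight $1/z$ (whose sum is controlled by $b-a$), invoke Lemma~\ref{lem:L4D2_tf} to convert the $L^4$-type sum in the first-difference quotient into the dissipated quantity, and close via~\eqref{eq:dissipation_tf}. The only cosmetic difference is that the paper applies H\"older to both jump sums under a single common prefactor $\bigl(\sum_k \delta/z_k\bigr)^{1/2}$ before squaring, whereas you bound $(S_1^n)^2$ and $(S_2^n)^2$ separately and then use $\tv{\cdot}^2\le 2(S_1^n)^2+2(S_2^n)^2$; both yield valid $(b-a)$-proportional constants (and incidentally your version makes the $z_k$-weight that must appear in the H\"older application for $S_1^n$ explicit, which is slightly garbled in the paper's displayed inequality).
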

\begin{rmk}
  The proof below yields for $\check C_T$ the explicit value
  \begin{align*}
    \check C_T = 50\big(\olHh + \Lambda M(T+1)\big). 
  \end{align*}
\end{rmk}
\begin{proof}
  Fix $n$. 
  The function $\partial_x\hatu_\Delta^n$ is locally constant on each interval $(x_\kmh,x_k)$, 
  and equal to zero elsewhere. 
  Therefore, the total variation of $\partial_x\hatu_\Delta^n$ is given by the sum over all jumps at the points of discontinuity,
  \begin{align}
    \label{eq:tvl0}
    \tv{\partial_x\hatu_\Delta^n} 
    &= \sum_{k\in\ivalp} \left|\llbracket \partial_x\hatu_\Delta^n\rrbracket_{x_k}\right| 
    + \sum_{\kappa\in\hval} \left|\llbracket \partial_x\hatu_\Delta^n\rrbracket_{x_\kappa}\right|.
  \end{align}
  The jumps can be evaluated by direct calculation:
  \begin{equation}
    \label{eq:hatux}
    \begin{split}
      \partial_x\hatu\big|_{(x_\kmh,x_k)}  
      &=\frac{z_k - z_\kmh}{x_k-x_\kmh} = \frac{z_\kph - z_\kmh}{x_k-x_{k-1}} 
      =z_\kmh\frac{z_\kph - z_\kmh}{\delta}, \quad\textnormal{for } k\in\ival\backslash\{0\}, \\
      \partial_x\hatu\big|_{(x_k,x_\kph)} 
      &= \frac{z_\kph - z_k}{x_\kph-x_k} = \frac{z_\kph - z_\kmh}{x_{k+1}-x_k}
      = z_\kph\frac{z_\kph - z_\kmh}{\delta}, \quad\textnormal{for } k\in\ival\backslash\{K\}.
    \end{split}
  \end{equation}
  This implies that
  \begin{align*}
    \left|\llbracket\partial_x\hatu_\Delta^n\rrbracket_{x_k}\right| = \delta\left(\frac{z^n_\kph-z^n_\kmh}\delta\right)^2
    &\quad\text{for $k\in\ivalp$},\\
    \left|\llbracket\partial_x\hatu_\Delta^n\rrbracket_{x_\kappa}\right| = \delta z^n_\kappa\left(\frac{z^n_{\kappa+1}-2z^n_\kappa+z^n_{\kappa-1}}{\delta^2}\right)
    &\quad\text{for $\kappa\in\hval$}.
  \end{align*}
  We substitute this into \eqref{eq:tvl0}, use Hölder's inequality, and apply \eqref{eq:L4D2_tf} 
  to obtain as a consequence of elementary estimates that
  \begin{align*}
    \tv{\partial_x\hatu_\Delta^n} 
    &\le\delta\sum_{k\in\ivalp}\left(\frac{z_\kph^n-z_\kmh^n}{\delta}\right)^2 
    +\delta\sum_{\kappa\in\hval}z_\kappa^n\left|[\D^2\zvec_\Delta^n]_\kappa\right| \\
    &\leq \left(\sum_{k\in\ivalp}\frac{\delta}{z_k}\right)^{\frac{1}{2}}
    \left[\left(\delta\sum_{k\in\ivalp}\left(\frac{z_\kph^n-z_\kmh^n}{\delta}\right)^4\right)^{\frac{1}{2}}
      +\left(\delta\sum_{\kappa\in\hval}(z_\kappa^n)^3[\D^2\zvec_\Delta^n]_\kappa^2\right)^{\frac{1}{2}}\right] \\
    &\leq \frac52(2(b-a))^{\frac{1}{2}}\left(\delta\sum_{\kappa\in\hval}(z_\kappa^n)^3[\D^2\zvec_\Delta^n]_\kappa^2\right)^{\frac{1}{2}}.
  \end{align*}
  We take both sides to the square, multiply by $\tau$, and sum over $n=0,\ldots,\Nt$.
	An application of the entropy dissipation inequality \eqref{eq:dissipation_tf} yields the desired bound \eqref{eq:BVbound_tf}.
\end{proof}

%
\subsection{Convergence of time interpolants}\label{sec:compactness}
\begin{lem}
  \label{lem:timeapriori_tf}
  There is a constant $C>0$ just dependent on $\olDi$ and $(b-a)$, such that
  the following estimates hold uniformly as $\Delta\to0$:
  The functions $\ti{u_\Delta}$ and $\ti{\hatu_\Delta}$ are uniformly bounded, and
  \begin{align}
    \label{eq:help008_tf}
    &\sup_{t\in\setRp}\|\partial_x\ti{\hatu_\Delta}(t)\|_{L^2(\Omega)} \leq C ,\\ 
    \label{eq:help007_tf}
    &\sup_{t\in\setRp}\|\ti{\hatu_\Delta}(t)-\ti{\baru_\Delta}(t)\|_{L^1(\Omega)} \le C\delta ,\\ 
    \label{eq:help066_tf}
    &\sup_{t\in\setRp}\|\ti{\hatu_\Delta}(t)\|_{L^\infty(\Omega)}\le C.
  \end{align}
\end{lem}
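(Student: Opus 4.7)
All three estimates rest on the single a priori bound $\Diz(\xvec_\Delta^n) \le \DiVz(\xvec_\Delta^n) \le \olDi$ supplied by Proposition~\ref{prp:energy}, together with mass conservation $\int_\Omega u_\Delta^n\dd x = M$.

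For \eqref{eq:help008_tf}, the explicit formula \eqref{eq:tildeux} shows that $\partial_x\hatu_\Delta^n$ is piecewise constant on the half-cells, taking values $z_\kmh[\D^1\zvec^n]_k$ on $(x_\kmh,x_k)$ and $z_\kph[\D^1\zvec^n]_k$ on $(x_k,x_\kph)$, with lengths $\delta/(2z_\kmh)$ and $\delta/(2z_\kph)$ respectively. Squaring and summing produces the clean identity $\|\partial_x\hatu_\Delta^n\|_{L^2(\Omega)}^2 = \tfrac{\delta}{2}\sum_k(z_\kph+z_\kmh)[\D^1\zvec^n]_k^2 = 2\Diz(\xvec_\Delta^n)$, which is bounded by $2\olDi$.

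The proof of \eqref{eq:help007_tf} hinges on the fundamental geometric observation that $\hatu_\Delta^n$ and $u_\Delta^n$ coincide at the midpoint of every cell: $\hatu_\Delta^n(x_\kmh) = z_\kmh = u_\Delta^n(x)$ for $x\in(x_{k-1},x_k)$. Hence $|u_\Delta^n - \hatu_\Delta^n|(x) = |\hatu_\Delta^n(x_\kmh) - \hatu_\Delta^n(x)|$ on each cell, and a direct computation using the piecewise-affine structure of $\hatu_\Delta^n$ gives
\begin{align*}
\int_{x_{k-1}}^{x_k}|u_\Delta^n - \hatu_\Delta^n|\dd x = \tfrac{\delta}{8 z_\kmh}\bigl(|z_\kph - z_\kmh|+|z_\kmh - z_\kmdd|\bigr).
\end{align*}
Summing over $k\in\ivalp$ and applying Cauchy-Schwarz against the weight $(z_\kph+z_\kmh)/2$ that appears in $\Diz$, while using the telescoping $\sum_k(x_k-x_{k-1})=b-a$ and the pointwise lower bound $z_\kappa \ge \delta/(b-a)$ established inside the proof of Lemma~\ref{lem:cfl}, should yield the desired $O(\delta)$ estimate.

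For the $L^\infty$-bound \eqref{eq:help066_tf} I would invoke the one-dimensional Sobolev embedding $H^1(\Omega)\hookrightarrow L^\infty(\Omega)$. Mass conservation together with the (uniform-boundedness version of) \eqref{eq:help007_tf} gives $\int_\Omega \hatu_\Delta^n\dd x$ uniformly bounded, and then Poincar\'e's inequality combined with \eqref{eq:help008_tf} yields $\|\hatu_\Delta^n\|_{H^1(\Omega)}\le C$; the embedding then controls $\|\hatu_\Delta^n\|_{L^\infty}$. The same bound transfers immediately to $\baru_\Delta^n = u_\Delta^n$ since each of its cell values $z_\kmh$ is attained by $\hatu_\Delta^n$ as a pointwise value at the midpoint $x_\kmh$. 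The delicate step I expect to be the main obstacle is extracting the sharp $O(\delta)$ rate in \eqref{eq:help007_tf}: a crude cell-wise Cauchy-Schwarz only yields boundedness, because the uniform bound $x_k-x_{k-1}\le b-a$ loses a factor of $\delta$; the gain must come from a refined balancing that trades $|z_\kph-z_\kmh|$ against the weighted integrand $(z_\kph+z_\kmh)[\D^1\zvec^n]_k^2$ of $\Diz$, exploiting that the length constraint $\sum(x_k-x_{k-1})=b-a$ prevents too many cells from being long simultaneously.
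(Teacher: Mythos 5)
Your overall plan matches the paper's: obtain \eqref{eq:help008_tf} by an exact $L^2$ computation that reproduces $\Diz$, compare $u_\Delta^n$ and $\hatu_\Delta^n$ cell by cell and apply Cauchy--Schwarz against the $\Diz$-weight to get \eqref{eq:help007_tf}, and then use a one-dimensional interpolation argument for \eqref{eq:help066_tf}. A few remarks on where you diverge from or fall short of the paper's argument.

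For \eqref{eq:help008_tf} your calculation is literally the paper's, including the identity $\|\partial_x\hatu_\Delta^n\|_{L^2}^2 = 2\Diz(\xvec_\Delta^n)$ (strictly $\le 2\Diz$, since the boundary half-cells contribute nothing), and the bound via Proposition~\ref{prp:energy} and \eqref{eq:genhypo_tf}.

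For \eqref{eq:help007_tf} the paper takes a shortcut you do not: it observes $|\baru_\Delta^n-\hatu_\Delta^n|\le |z_\kph-z_\kmh|$ on $[x_\kmh,x_\kph]$, bounds $\|\baru_\Delta^n-\hatu_\Delta^n\|_{L^1}$ by $\delta\sum_k\delta\,|\D^1\zvec^n|_k$, and then splits $\delta\,|\D^1\zvec^n|_k = (\delta/z_k)^{1/2}\,(\delta z_k)^{1/2}|\D^1\zvec^n|_k$ so that Cauchy--Schwarz produces $\left(\sum_k\delta/z_k\right)^{1/2}\Diz^{1/2}$. The crucial ingredient is the telescoping identity $\sum_{\kappa\in\hval}\delta/z_\kappa = \sum_\kappa(x_\kappp-x_\kappm)=b-a$, together with the convexity bound $1/z_k\le\tfrac12(1/z_\kph+1/z_\kmh)$; this is what supplies the missing factor, \emph{not} the pointwise lower bound $z_\kappa\ge\delta/(b-a)$ you cite. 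Your cell-wise computation on $(x_{k-1},x_k)$ is finer (and arithmetically correct after fixing the undefined symbol to $z_\kmd$), but you leave the actual Cauchy--Schwarz step as ``should yield,'' which, as you correctly flag, is exactly where the work is. As written the proposal does not close this estimate, whereas the paper provides an explicit (if quite terse) chain of inequalities.

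For \eqref{eq:help066_tf} you propose Poincar\'e plus $H^1\hookrightarrow L^\infty(\Omega)$, with the $L^1$-bound on $\hatu_\Delta^n$ (inherited from mass conservation via \eqref{eq:help007_tf}) controlling the mean. The paper instead applies a Gagliardo--Nirenberg interpolation $\|f\|_{L^\infty}\le C\|\partial_x f\|_{L^2}^{2/3}\|f\|_{L^1}^{1/3}$ directly (see Lemma~\ref{lem:GN} for the form actually proved in the appendix). The two routes rest on the same two ingredients ($L^1$ and $\dot H^1$ control) and give the same conclusion; neither is sharper for the purpose here, so this difference is cosmetic.
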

\begin{proof}
  For each $n\in\setN$,
  \begin{align*}
    \big\|\partial_x\hatu_\Delta^n\big\|_{L^2(\Omega)}^2
    &= \sum_{k\in\ivalp}
    \bigg[(x^n_\kph-x^n_k)\Big(\frac{z^n_\kph-z^n_k}{x^n_\kph-x^n_k}\Big)^2 
    +(x^n_k-x^n_\kmh) \Big(\frac{z^n_k-z^n_\kmh}{x^n_k-x^n_\kmh}\Big)^2\bigg] \\
    &= \sum_{k\in\ivalp}
    \bigg[\frac{x^n_{K-1}-x^n_k}2\Big(\frac{z^n_\kph-z^n_\kmh}{x^n_{K-1}-x^n_k}\Big)^2 
    +\frac{x^n_k-x^n_{k-1}}2 \Big(\frac{z^n_\kph -z^n_\kmh}{x^n_k-x^n_{k-1}}\Big)^2\bigg] \\
    &= \delta\sum_{k\in\ivalp}\frac{z_\kph^n + z_\kmh^n}{2}\Big(\frac{z^n_\kph-z^n_\kmh}\delta\Big)^2
    \le 2\Diz(\xvec_\Delta^n).
  \end{align*}
  This gives \eqref{eq:help008_tf}.
  For proving \eqref{eq:help007_tf}, we start with the elementary observation that 
  \[ |\baru_\Delta^n(x)-\hatu_\Delta^n(x)| \le |z^n_\kph-z^n_\kmh| \quad \text{for all $x\in[x_\kmh,x_\kph]$}. \]
  Therefore,
  \begin{align*}
    \|\baru_\Delta^n-\hatu_\Delta^n\|_{L^1(\Omega)}
    &\le\delta\sum_{k\in\ivalp}\delta\left|\frac{z_\kph^n - z_\kmh^n}{\delta}\right|
    \le \delta\left(\sum_{k\in\ivalp}\frac{\delta}{z^n_k}\right)^{1/2}
							\left(\delta\sum_{k\in\ivalp}z_k^n\left(\frac{z_\kph^n - z_\kmh^n}{\delta}\right)^2\right)^{1/2}\\
    &\le\delta (2(b-a))^{1/2}\Diz(\xvec_\Delta^n)^{1/2}, 
  \end{align*}
  which shows \eqref{eq:help007_tf}.
  Finally, \eqref{eq:help066_tf} is a consequence of \eqref{eq:help008_tf}\&\eqref{eq:help007_tf}.
  First, note that
  \begin{align*}
    \left\|\ti{\hatu_\Delta}(t)\right\|_{L^1(\Omega)} 
    \le \|\ti{u_\Delta}(t)\|_{L^1(\Omega)} + \|\ti{\hatu_\Delta}(t)-\ti{u_\Delta}(t)\|_{L^1(\Omega)}
    \le M + C\delta
  \end{align*}
  is uniformly bounded.
  Now apply the interpolation inequality
  \[ \|\ti{\hatu_\Delta}(t)\|_{L^\infty(\Omega)} \le C \|\partial_x\ti{\hatu_\Delta}(t)\|_{L^2(\Omega)}^{2/3}\|\ti{\hatu_\Delta}(t)\|_{L^1(\Omega)}^{1/3} \]
  to obtain the uniform bound in \eqref{eq:help066_tf}.
\end{proof}
%
%

\begin{prp}
  \label{prp:convergence1_tf}
  There exists a function $u_*:\setRp\times\Omega\to\setRp$
  that satisfies for any $T>0$
  \begin{align}\label{eq:reg1_tf}
    u_*\in C^{1/2}([0,T];\dens)\cap L^\infty([0,T];H^1(\Omega)).
  \end{align}
  Furthermore, there exists a subsequence of $\Delta$ (still denoted by $\Delta$), 
  such that the following are true:
  \begin{align}
    \label{eq:weak_tf}
    \ti{u_\Delta}(t) &\longrightarrow u_*(t) \quad\text{in $\dens$, uniformly with respect to time}, \\
    \label{eq:uniformLinfty_tf}
    \ti{\hatu_\Delta} &\longrightarrow u_* \quad\text{uniformly on $[0,T]\times\Omega$}, \\
    \label{eq:XuniformL2}
    \ti{\theX_\Delta}(t) &\longrightarrow \theX_*(t) \quad\text{in $L^2(\M)$, uniformly with respect to $t\in[0,T]$},
  \end{align}
  where $\theX_*\in C^{1/2}([0,T];L^2(\M))$ is the Lagrangian map of $u_*$.
\end{prp}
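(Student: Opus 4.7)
The backbone of the argument is to turn the discrete Hölder estimate \eqref{eq:uniform_time_tf} into a temporal modulus of continuity for the piecewise-constant-in-time interpolant $\ti{u_\Delta}$. Combined with the metric equivalence \eqref{eq:metricequivalent} from Remark \ref{rmk:wass}, \eqref{eq:uniform_time_tf} gives $\wass\bigl(u_\Delta^{\overline n}, u_\Delta^{\underline n}\bigr) \le C\sqrt{(\overline n-\underline n)\tau}$ with $C$ controlled by $\olDi$, and hence
\begin{align*}
    \wass\bigl(\ti{u_\Delta}(t), \ti{u_\Delta}(s)\bigr) \le C\bigl(|t-s| + \tau\bigr)^{1/2} \quad \text{for all } s,t \in [0,T],
\end{align*}
and an analogous $L^2(\M)$-estimate for the Lagrangian interpolant $\ti{\theX_\Delta}$.

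Existence of the limit $u_*$ together with the convergences \eqref{eq:weak_tf} and \eqref{eq:XuniformL2} would then follow from a refined Arzelà--Ascoli theorem in the complete metric space $(\dens, \wass)$: equicontinuity of the family $\{\ti{u_\Delta}\}$ is guaranteed by the Hölder bound just derived (up to a vanishing discretisation remainder of size $\sqrt\tau$), while pointwise-in-$t$ relative compactness is automatic since all $\ti{u_\Delta}(t)$ are measures of mass $M$ supported on the bounded set $\Omega$, and are hence tight with uniformly bounded second moments. A diagonal extraction produces $u_* \in C^{1/2}([0,T]; \dens)$ with the claimed uniform-in-$t$ convergence in $\dens$, and $L^2(\M)$-convergence of the Lagrangian maps to $\theX_*$ is an immediate consequence of \eqref{eq:metricequivalent}.

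The $H^1$-part of the regularity \eqref{eq:reg1_tf} can then be read off from Lemma \ref{lem:timeapriori_tf}. Indeed, \eqref{eq:help007_tf} shows that $\ti{\hatu_\Delta}(t)$ and $\ti{u_\Delta}(t)$ share the same $\wass$-limit at each $t$, namely $u_*(t)$. The uniform $H^1 \cap L^\infty$ bound coming from \eqref{eq:help008_tf} and \eqref{eq:help066_tf} yields weak-$H^1$ cluster points for $\ti{\hatu_\Delta}(t)$, each of which must coincide with $u_*(t)$ by the $\wass$-identification; a standard subsequence argument then gives $\ti{\hatu_\Delta}(t) \rightharpoonup u_*(t)$ in $H^1(\Omega)$, and weak lower semicontinuity transfers the bound to $u_* \in L^\infty([0,T]; H^1(\Omega))$.

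The most delicate step is the uniform convergence \eqref{eq:uniformLinfty_tf} on $[0,T] \times \Omega$, which requires promoting weak $H^1$-convergence in space plus $\wass$-Hölder continuity in time to full sup-norm convergence. The plan is to verify equicontinuity of $\{\ti{\hatu_\Delta}\}$ as a family in $C([0,T] \times \Omega)$: spatial equicontinuity, uniform in $t$ and $\Delta$, comes from the one-dimensional embedding $H^1(\Omega) \hookrightarrow C^{0,1/2}(\Omega)$ applied to the bound \eqref{eq:help008_tf}; temporal equicontinuity rests on the elementary principle that two functions uniformly bounded in $C^{0,1/2}(\Omega)$ and $\varepsilon$-close in $\wass$ are necessarily $O(\varepsilon^\alpha)$-close in $L^\infty(\Omega)$ for some $\alpha>0$, an inequality provable by Kantorovich--Rubinstein duality against a Lipschitz bump centred at the point of maximal deviation. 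Arzelà--Ascoli on $C([0,T]\times\Omega)$ then produces a further subsequence converging uniformly, and the limit is forced to equal $u_*$ by the already-established $\wass$-convergence.
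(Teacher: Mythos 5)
Your argument for the existence of $u_*$, the $\wass$-H\"older bound, the invocation of the refined Arzel\`a--Ascoli theorem from \cite[Prop.\ 3.3.1]{AGS}, and the $L^2(\M)$ convergence of Lagrangian maps via Remark~\ref{rmk:wass} mirrors the paper's proof exactly, and your weak-$H^1$ identification for \eqref{eq:reg1_tf} is equally sound. The genuinely different part is how you obtain the uniform convergence \eqref{eq:uniformLinfty_tf}. The paper proceeds in two stages: it first proves $\hatu_\Delta(t)\to u_*(t)$ in $L^2(\Omega)$ uniformly in $t$, by a decomposition of $\|\hatu_\Delta(t)-u_*(t)\|_{L^2}^2$ that is pushed to Lagrangian coordinates via a change of variables and controlled by the $L^2(\M)$-convergence of the Lagrangian maps together with the $H^1$-bound, and then upgrades $L^2$-plus-bounded-$H^1$ to $C^{1/6}$-convergence via the Gagliardo--Nirenberg inequality \eqref{eq:GN}. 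You instead aim directly at Arzel\`a--Ascoli in $C([0,T]\times\Omega)$: spatial equicontinuity from $H^1(\Omega)\hookrightarrow C^{0,1/2}(\Omega)$ and temporal equicontinuity from a $\wass$-to-$L^\infty$ interpolation. That interpolation is correct: if $u,v$ have equal mass and H\"older-$\tfrac12$ seminorm $\le A$, the test function $\varphi(x)=\max(0,r-|x-x_0|)$ with $r=(\|u-v\|_\infty/(4A))^2$ inserted into Kantorovich--Rubinstein duality gives $\|u-v\|_{L^\infty}^5\lesssim A^4\,\mathcal W_1(u,v)\lesssim A^4 M^{1/2}\wass(u,v)$. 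So the scheme works, trading the paper's Lagrangian change-of-variables computation for this interpolation lemma; your route is arguably leaner because it never needs to switch to $\xi$-coordinates in the compactness step.

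Two technical points deserve explicit care in your write-up. First, the interpolation requires the two densities to have equal mass, but $\intom\hatu_\Delta^n\dd x$ is only $M+\mathcal O(\delta)$, not exactly $M$; you should either normalize $\hatu_\Delta$ to mass $M$ (the normalization factor is $1+\mathcal O(\delta)$, so the $C^{1/2}$-bound and $L^\infty$-closeness are preserved) or replace $\wass$ by the $L^1$-distance in the interpolation step, using \eqref{eq:help007_tf} to pass between $\hatu_\Delta$ and $u_\Delta$. Second, when you write that $\hatu_\Delta(t)$ and $u_\Delta(t)$ ``share the same $\wass$-limit,'' the same mass caveat applies; what you actually have, and what suffices for the weak-$H^1$ identification, is narrow convergence of both to $u_*(t)$, which follows from the $\wass$-convergence of $u_\Delta(t)$ and the $L^1$-bound \eqref{eq:help007_tf}.
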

\begin{proof}
  From the discrete energy inequality \eqref{eq:uniform_time_tf}
  and the equivalence \eqref{eq:metricequivalent} of $\wassN$ with the usual $L^2$-Wasserstein metric $\wass$,
  it follows by elementary considerations that
  \begin{align}
    \label{eq:preholder}
    \wass\big(\ti{\baru_\Delta}(t),\ti{\baru_\Delta}(s)\big)^2 \le C\big(|t-s|\big),
  \end{align}
  for all $t,s\in[0,T]$.
  Hence the generalized version of the Arzela-Ascoli theorem from \cite[Proposition 3.3.1]{AGS} is applicable 
  and yields the convergence of a subsequence of $(\ti{\baru_\Delta})$ to a limit $u_*$ in $\dens$, 
  locally uniformly with respect to $t\in[0,\infty)$.
  The H\"older-type estimate \eqref{eq:preholder} implies $u\in C^{1/2}([0,\infty);\dens)$.
  The claim \eqref{eq:XuniformL2} is a consequence of the equivalence 
  between the Wasserstein metric on $\dens$ and the $L^2$-metric on $\xspc$, see Remark \ref{rmk:wass}.
  In addition, the limit function $u_*$ is bounded on $[0,T]\times\Omega$, thanks to \eqref{eq:help066_tf}.
  
  As an intermediate step towards proving uniform convergence of $\ti{\hatu_\Delta}$,
  we show that
  \begin{align}
    \label{eq:unifL2}
    \hatu_\Delta(t)\longrightarrow u_*(t) \quad\text{in $L^2(\Omega)$, uniformly in $t\in[0,T]$}.
  \end{align}
  For $t\in[0,T]$, we expand the $L^2$-norm as follows:
  \begin{align*}
    \|\ti{\hatu_\Delta}(t)-u_*(t)\|_{L^2(\Omega)}^2
    &= \intom \Big[\big(\ti{\hatu_\Delta}-u_*\big) \ti{\baru_\Delta}\Big](t,x) \dd x \\
    &\qquad + \intom \Big[\big(\ti{\hatu_\Delta}-u_*\big) \big(\ti{\hatu_\Delta} - \ti{\baru_\Delta}\big)\Big](t,x) \dd x \\
    &\qquad - \intom \Big[\big(\ti{\hatu_\Delta}-u_*\big) u_*\Big](t,x) \dd x.
  \end{align*}
  On the one hand, observe that
  \begin{align*}
    &\sup_{t\in[0,T]}\intom\Big[\big(\ti{\hatu_\Delta}-u_*\big) \big(\ti{\hatu_\Delta} - \ti{u_\Delta}\big)\Big](t,x) \dd x \\
    &\leq\sup_{t\in[0,T]}\left(\big(\|\ti{\hatu_\Delta}(t)\|_{L^{\infty}(\Omega)}
      +\|u_*(t)\|_{L^\infty(\Omega)}\big)\|\ti{\hatu_\Delta}(t) - \ti{\baru_\Delta}(t)\|_{L^1(\Omega)}\right)
  \end{align*}
  which converges to zero as $\Delta\to0$, using both conclusions from Lemma \ref{lem:timeapriori_tf}.
  On the other hand, we can use a change of variables to write
  \begin{align*}
    &\intom \Big[\big(\ti{\hatu_\Delta}-u_*\big) \ti{u_\Delta}\Big](t,x) \dd x 
    - \intom \Big[\big(\ti{\hatu_\Delta}-u_*\big) u_*\Big](t,x) \dd x \\
    &= \intM \Big[\ti{\hatu_\Delta}-u_*\Big]\big(t,\ti{\cX_\Delta}(t,x)\big)\dd\xi
    - \intM \Big[\ti{\hatu_\Delta}-u_*\Big]\big(t,\theX_*(t,\xi)\big)\dd\xi.
  \end{align*}
  We regroup terms under the integrals and use the triangle inequality.
  For the first term, we obtain
  \begin{align*}    
    &\sup_{t\in[0,T]} \left|\intM \left(\ti{\hatu_\Delta}\big(t,\ti{\cX_\Delta}(t,\xi)\big)
      -\ti{\hatu_\Delta}\big(t,\theX_*(t,\xi)\big)\right) \dd\xi \right|\\
    &\le \sup_{t\in[0,T]}\intM \int_{\theX_*(t,\xi)}^{\ti{\cX_\Delta}(t,\xi)} \left|\partial_x\ti{\hatu_\Delta}\right|(t,y)\dd y \dd\xi \\
    &\le \sup_{t\in[0,T]}\intM \|\ti{\hatu_\Delta}\|_{H^1(\Omega)} |\theX_*-\ti{\cX_\Delta}|(t,\xi)^{1/2} \dd\xi\\
    &\le \sup_{t\in[0,T]}\Big(\|\ti{\hatu_\Delta}(t)\|_{H^1(\Omega)} \|\theX_*(t)-\ti{\cX_\Delta}(t)\|_{L^2(\M)}^{1/4}\Big).
  \end{align*}
  A similar reasoning applies to the integral involving $u_*$ in place of $\ti{\hatu_\Delta}$.
  Together, this proves \eqref{eq:unifL2},
  and it further proves that $u_*\in L^\infty([0,T];H^1(\Omega))$, 
  since the uniform bound on $\hatu_\Delta$ from \eqref{eq:help008_tf} is inherited by the limit.

  Now the Gagliardo-Nirenberg inequality \eqref{eq:GN} provides the estimate
  \begin{align}\label{eq:unif1}
    \|\ti{\hatu_\Delta}(t)-u_*(t)\|_{C^{1/6}(\Omega)}
    \leq C \|\ti{\hatu_\Delta}(t)-u_*(t)\|_{H^1(\Omega)}^{2/3} \|\ti{\hatu_\Delta}(t)-u_*(t)\|_{L^2(\Omega)}^{1/3}.
  \end{align}
  Combining the convergence in $L^2(\Omega)$ by \eqref{eq:unifL2} with the boundedness in $H^1(\Omega)$ from \eqref{eq:help008_tf},
  it readily follows that $\hatu_\Delta(t)\to u_*(t)$ in $C^{1/6}(\Omega)$, uniformly in $t\in[0,T]$.
  This clearly implies that $\ti{\hatu_\Delta}\to u_*$ uniformly on $[0,T]\times\Omega$.
\end{proof}
\begin{prp}
  \label{prp:convergence2_tf}
  In the setting of Proposition \ref{prp:convergence1_tf},
  we have that $u_*\in \Lloc^\infty(\setRp;H^1(\Omega))$, and
  \begin{align}
    \label{eq:strong_tf}
    \ti{\hatu_\Delta}\to u_* \quad \text{strongly in $L^2([0,T];H^1(\Omega))$}
  \end{align}
  for any $T>0$ as $\Delta\to0$.
\end{prp}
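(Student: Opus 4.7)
The $L^\infty_{\mathrm{loc}}(\setRp; H^1(\Omega))$ regularity of $u_*$ is immediate from Proposition \ref{prp:convergence1_tf} upon taking $T>0$ arbitrary. For the strong convergence \eqref{eq:strong_tf}, I set $v_\Delta := \ti{\hatu_\Delta} - u_*$ and observe that $v_\Delta\to 0$ uniformly on $[0,T]\times\Omega$ already takes care of the $L^2$-part of the $H^1$-norm, so it remains only to prove that $\int_0^T \|\partial_x v_\Delta(t)\|_{L^2(\Omega)}^2 \dd t \to 0$ as $\Delta\to 0$.

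First I would transfer the BV-regularity from the discrete solutions to the limit: for every $t\in[0,T]$, Proposition \ref{prp:convergence1_tf} together with the uniform bound \eqref{eq:help008_tf} provides the weak convergence $\partial_x \ti{\hatu_\Delta}(t) \rightharpoonup \partial_x u_*(t)$ in $L^2(\Omega)$; lower semicontinuity of total variation with respect to weak $L^1$-convergence (which on the bounded interval $\Omega$ follows from weak $L^2$-convergence) then gives $\tv{\partial_x u_*(t)} \leq \liminf_{\Delta\to 0} \tv{\partial_x \ti{\hatu_\Delta}(t)}$, and Fatou's lemma applied to Proposition \ref{prp:BVbound_tf} yields $\int_0^T \tv{\partial_x u_*(t)}^2 \dd t \leq \check C_T$. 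In particular, $\tv{\partial_x v_\Delta(\cdot)}$ is uniformly bounded in $L^2(0,T)$.

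The heart of the argument is then a direct integration by parts: at a.e. $t$, $v_\Delta(t)$ is Lipschitz (since $\partial_x v_\Delta(t)\in BV(\Omega)\hookrightarrow L^\infty(\Omega)$ in 1D via $\|w\|_{L^\infty}\leq C(\|w\|_{L^2}+\tv{w})$) and its second distributional derivative is a finite Radon measure, so
\begin{align*}
\|\partial_x v_\Delta(t)\|_{L^2}^2 = \big[v_\Delta\,\partial_x v_\Delta\big]_a^b - \int_\Omega v_\Delta(t,x)\,\dd(\partial_{xx}v_\Delta(t)).
\end{align*}
Both terms on the right can be bounded by $C\|v_\Delta(t)\|_{L^\infty(\Omega)}\big(\|\partial_x v_\Delta(t)\|_{L^2}+\tv{\partial_x v_\Delta(t)}\big)$, the boundary term using the same BV-to-$L^\infty$ embedding to control $\|\partial_x v_\Delta(t)\|_{L^\infty}$. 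Absorbing the mixed term via Young's inequality on the left and integrating over $[0,T]$ yields
\begin{align*}
\int_0^T \|\partial_x v_\Delta(t)\|_{L^2}^2 \dd t \leq C_1 \|v_\Delta\|_{L^\infty([0,T]\times\Omega)}^2 + C_2 \|v_\Delta\|_{L^\infty([0,T]\times\Omega)}\, \|\tv{\partial_x v_\Delta(\cdot)}\|_{L^2(0,T)},
\end{align*}
and the right-hand side tends to zero as $\Delta\to 0$ because $\|v_\Delta\|_{L^\infty}\to 0$ by Proposition \ref{prp:convergence1_tf} while the TV-factor stays bounded.

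The technical point that requires most care is the justification of the integration by parts, and in particular the control of the boundary terms at $x=a,b$; this is where the 1D BV embedding enters essentially, to guarantee that $\partial_x v_\Delta(t)$ has well-defined traces at the endpoints and that the resulting contributions can all be absorbed by the Young-type argument. Notably, no further subsequence extraction beyond that of Proposition \ref{prp:convergence1_tf} is needed in this approach.
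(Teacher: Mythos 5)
Your argument is correct, but it takes a different route than the paper's.

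The paper avoids any comparison with the limit $u_*$: it applies the integration-by-parts identity $\|\partial_x w\|_{L^2(\Omega)}^2 \le \|w\|_{L^\infty(\Omega)}\tv{\partial_x w}$ directly to a difference of \emph{two discrete solutions} $w = \ti{\hatu_{\Delta_1}} - \ti{\hatu_{\Delta_2}}$. Since each $\hatu_{\Delta_i}^n$ is piecewise affine and $\partial_x\hatu_{\Delta_i}^n$ vanishes near $a$ and $b$, this integration by parts is elementary (a finite sum of products of constants) and has no boundary contributions to worry about. Combined with the uniform convergence of $\ti{\hatu_\Delta}$ (so the $L^\infty$ factor tends to zero), the $L^2$-in-time TV bound from Proposition \ref{prp:BVbound_tf}, and subadditivity of TV, the paper shows that $(\ti{\hatu_\Delta})$ is a Cauchy sequence in $L^2([0,T];H^1(\Omega))$, whose limit is then identified with $u_*$ distributionally.

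You instead compare directly with $u_*$: you first transfer the TV bound to the limit via weak $L^2$-convergence of the gradients, lower semicontinuity of TV under weak $L^1$-convergence, and Fatou; then you integrate by parts against the BV measure $\partial_{xx}v_\Delta$ and control the nonvanishing boundary terms via the 1D embedding $BV(\Omega)\hookrightarrow L^\infty(\Omega)$ and Young's inequality. This is a valid argument, and arguably more robust (it does not exploit the special piecewise-affine structure or the built-in vanishing of $\partial_x\hatu_\Delta$ at the endpoints), but it requires extra machinery: the lower-semicontinuity/Fatou step to get $\tv{\partial_x u_*(\cdot)} \in L^2(0,T)$, a measure-theoretic integration by parts, and the BV-to-$L^\infty$ control. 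The paper's Cauchy-sequence decomposition sidesteps all of that — which is what makes it the cleaner choice here — but both approaches share the same analytical core, namely the $L^\infty \times \mathrm{TV}$ bound on $\|\partial_x(\cdot)\|_{L^2}^2$ and the interplay between uniform convergence and the entropy-derived TV estimate.
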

\begin{proof}
Fix $[0,T]\subseteq\setRp$.
Remember that $\hatu_\Delta^n$ is differentiable
with local constant derivatives on any interval $(x_\kappm,x_\kappa]$ for $\kappa\in\ivalp\cup\hval\cup\{K\}$,
and it especially holds $\partial_x\hatu_\Delta^n(x) = 0$ for all $x\in(a,a+\delta/2)$ and all $x\in(b-\delta/2,b)$.
Therefore, integration by parts and a rearrangement of the terms yields
\begin{align*}
	\left\|\partial_x\hatu_\Delta^n\right\|_{L^2(\Omega)}^2
	&= \sum_{\kappa\in\ivalp\cup\hval\cup\{K\}} \int_{x_\kappm}^{x_\kappa} \partial_x\hatu_\Delta^n\partial_x\hatu_\Delta^n \dd x 
	= \sum_{\kappa\in\ivalp\cup\hval\cup\{K\}}
			\Big[\hatu_\Delta^n(x)\partial_x\hatu_\Delta^n(x)\Big]_{x = x_{\kappm}+0}^{x = x_{\kappa}-0}\\
	&\leq \left\|\hatu_\Delta^n\right\|_{L^\infty(\Omega)}\tv{\partial_x\hatu_\Delta^n}.
\end{align*}
Take further two arbitrary discretizations $\Delta_1,\Delta_2$ and apply the above result on the difference 
$\ti{\hatu_{\Delta_1}} - \ti{\hatu_{\Delta_2}}$.
Using that $\tv{f-g}\leq\tv{f}+\tv{g}$ 
we obtain by integration w.r.t. time that
\begin{align*}
	&\int_0^T\left\|\partial_x\ti{\hatu_{\Delta_1}} - \partial_x\ti{\hatu_{\Delta_2}}\right\|_{L^2(\Omega)}^2 \dd t \\
	&\leq T^{1/2}
	\sup_{t\in[0,T]}\left\|\ti{\hatu_{\Delta_1}} - \ti{\hatu_{\Delta_2}}\right\|_{L^\infty(\Omega)}
	\left(2\int_0^T \tv{\partial_x\ti{\hatu_{\Delta_1}}}^2 + \tv{\partial_x\ti{\hatu_{\Delta_2}}}^2 \dd t\right)^{1/2}.
\end{align*}
This shows that $\ti{\hatu_\Delta}$ is a Cauchy-sequence in $L^2([0,T];H^1(\Omega))$
--- remember \eqref{eq:BVbound_tf} and especially the convergence result in \eqref{eq:uniformLinfty_tf} ---
and its limit has to coincide with $u_*$ in the sense of distributions, 
due to the uniform convergence of $\ti{\hatu_\Delta}$ to $u_*$ on $[0,T]\times\Omega$.
\end{proof}
\section{Weak formulation of the limit equation}\label{sec:weak}
%

In the continuous theory a suitable weak formulation for \eqref{eq:thinfilm} is attained by applying purely variational methods, see for instance \cite{MMS,GST}.
More precisely, the weak formulation in \eqref{eq:weak_intro_tf} 
is obtained by studying the variation of the entropy $\Di$ along a Wasserstein gradient flow generated by an arbitrary
spatial test function $\rho$, which describes a transport along the velocity field $\rd$. 
The corresponding entropy functional is $\Phi(u)=\int_\setR\rho(x)u(x)\dd x$.
It is therefore obvious to adapt this idea -- similar as in \cite{dde,dlssv3} -- 
to show that $\ti{\baru_\Delta}$ inherits a discrete analogue to the weak formulation \eqref{eq:weak_intro_tf}. 
Hence, we study the variations of the entropy $\Diz$ along the vector field 
generated by the potential
\begin{align*}
	\Phiz(\xvec) = \intM \rho(\cX_\theh[\xvec])\dd\xi
\end{align*}
for any arbitrary smooth test function $\rho\in C^\infty(\Omega)$ with $\rd(a)=\rd(b)=0$.
That is why we define
\begin{align}\label{eq:weak_flow}
	\vvec(\xvec) = \wgrad\Phiz(\xvec),\quad\textnormal{where}\quad\big[\grad\Phiz(\xvec)\big]_k = \intM \rd(\cX(\xi)) \hatf_k(\xi) \dd\xi,
	\quad k = 1,\ldots,K-1.
\end{align}
Later on, we will use the compactness results from section \ref{sec:compactness} to pass to the limit, 
which yields the weak formulation of our main result in Theorem \ref{thm:main_tf}. 
Therefore, the aim of this section is to show the following:
\begin{prp}\label{prp:weakgoal_tf}
For every $\rho\in C^\infty(\Omega)$ with $\rd(a)=\rd(b)=0$,
and for every $\eta\in C^\infty_c(\setRpp)$,
the limit curve $u_*$ satisfies
\begin{align}\label{eq:weakgoal_tf}
	\int_0^\infty\intom 
	\partial_t\varphi u_* \dd t\dd x + \int_0^\infty N(u_*,\varphi)\dd t = 0,
\end{align}
where the highly nonlinear term $N$ from \eqref{eq:N_tf} is given by
\begin{align}\label{eq:N_weak_tf}
	N(u,\rho) = \frac12\intom (u^2)_x \rddd + 3 u_x^2 \rdd \dd x + \intom V_x u \rd \dd x.
\end{align}
\end{prp}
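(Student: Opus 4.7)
The plan is to test the fully-discrete Euler--Lagrange equation \eqref{eq:euler_tf} against the discrete velocity field $\vvec^n := \vvec(\xvec_\Delta^n)$ from \eqref{eq:weak_flow}, weighted in time by $\eta(n\tau)$, and then pass to the limit $\Delta\to 0$ using the compactness results of Section \ref{sec:apriori}. Taking the $\wspr{\cdot}{\cdot}$-inner product of \eqref{eq:euler_tf} with $\vvec^n$ and summing over $n\geq 1$ produces the discrete identity
\begin{align*}
  \sum_{n\geq 1} \eta(n\tau)\wspr{\vvec^n}{\xvec_\Delta^n - \xvec_\Delta^{n-1}} = -\tau \sum_{n\geq 1}\eta(n\tau)\wspr{\vvec^n}{\wgrad\DiVz(\xvec_\Delta^n)}.
\end{align*}
The goal is to show that the left-hand side converges to $-\int_0^\infty\intom\partial_t\varphi\, u_*\dd x\dd t$ with $\varphi(t,x) = \eta(t)\rho(x)$, while the right-hand side converges to $-\int_0^\infty\eta(t)\, N(u_*,\rho)\dd t$.

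For the time-difference side, a second-order Taylor expansion of $\Phiz$ gives $\Phiz(\xvec_\Delta^n) - \Phiz(\xvec_\Delta^{n-1}) = \wspr{\vvec^n}{\xvec_\Delta^n - \xvec_\Delta^{n-1}} + r_n$ with $|r_n|$ bounded by a constant (depending on $\|\rho''\|_\infty$) times $\wnrm{\xvec_\Delta^n - \xvec_\Delta^{n-1}}^2$. The accumulated error $\sum_n |\eta(n\tau)|\,|r_n|$ is of order $\tau$ by the energy estimate \eqref{eq:eee_tf} and therefore vanishes. Abel summation in $n$ (boundary terms drop since $\eta\in C^\infty_c(\setRpp)$) converts the main sum into $-\sum_n(\eta(n\tau)-\eta((n-1)\tau))\Phiz(\xvec_\Delta^n)$. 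Since $\Phiz(\xvec_\Delta^n) = \intom\rho\, u_\Delta^n\dd x$, the uniform Wasserstein convergence \eqref{eq:weak_tf} together with the smoothness of $\eta$ yields the desired limit.

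The core of the work is on the right-hand side. Split $\DiVz = \Diz + \potz$. The potential contribution $\wspr{\vvec^n}{\wgrad\potz(\xvec_\Delta^n)} = \delta\sum_k v_k^n V_x(x_k^n)$ is, up to an $O(\delta)$ error, a Riemann sum for $\intom V_x u\,\rho'\dd x$, thanks to $v_k^n\approx\rho'(x_k^n)$. For the $\Diz$-contribution, the representation \eqref{eq:HS_step1} and the explicit formulas \eqref{eq:gradHFzQz}--\eqref{eq:HessHFzQz} yield $\wspr{\vvec^n}{\wgrad\Diz(\xvec_\Delta^n)}$ as an explicit sum over $\kappa\in\hval$ involving products of $v_k^n$, $z_\kappa^n$, and their discrete first and second differences. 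Applying discrete summation by parts repeatedly to transfer the discrete second differences of $\zvec^n$ onto Taylor expansions of $\rho'$ (the boundary conditions $\rho'(a)=\rho'(b)=0$ kill boundary contributions), and using the identities \eqref{eq:tildeux} for $\partial_x\hatu_\Delta^n$, one recognizes the remaining expressions as Riemann-sum approximations of $\tfrac12\intom(u^2)_x\rho'''\dd x$ and $\tfrac32\intom u_x^2\rho''\dd x$.

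The limit passage then uses the strong convergence $\ti{\hatu_\Delta}\to u_*$ in $L^2([0,T];H^1(\Omega))$ from Proposition \ref{prp:convergence2_tf}, which is exactly what is needed to pass to the limit in the quadratic nonlinearities $u_x^2$ and $(u^2)_x = 2uu_x$; the uniform $L^\infty$-bound \eqref{eq:help066_tf} controls the products. Residues of the Taylor expansions of $\rho',\rho'',\rho'''$ between adjacent grid nodes are of size $O(\delta)$ multiplied by discrete norms of $\zvec^n$ that are uniformly controlled by Proposition \ref{prp:energy} and Lemma \ref{lem:dissipation_tf}. The main obstacle is thus not the limit passage itself but the algebraic bookkeeping of the discrete summation-by-parts procedure for the $\Diz$-contribution: lacking a discrete chain rule, one has to shuffle differences and products of $z_\kappa^n$ through multiple index shifts to reach a form manifestly convergent to $N(u_*,\rho)$, while keeping every residual term in a shape that manifestly vanishes as $\Delta\to 0$.
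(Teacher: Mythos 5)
Your proposal follows the same overall route as the paper: test the discrete Euler--Lagrange equation against the velocity field generated by $\Phiz$ (the paper actually tests against $\rvec(\xvec)=(\rho'(x_1),\ldots,\rho'(x_{K-1}))$ and separately shows $\wnrm{\vvec-\rvec}=O(\delta^{1/2})$, but these are interchangeable at this cost), use a Taylor expansion of $\Phiz$ plus Abel summation for the time-derivative side, and invoke the strong $L^2([0,T];H^1(\Omega))$ convergence from Proposition~\ref{prp:convergence2_tf} to pass to the limit in the quadratic nonlinearity. That part is sound.

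The gap is in the error analysis of the $\Diz$-contribution on the right-hand side. You assert that the residues of the Taylor expansions of $\rho',\rho'',\rho'''$ between adjacent grid nodes are of size $O(\delta)$ multiplied by controlled discrete norms. This is false in general: a Taylor remainder between $x_k$ and $x_{k+1}$ is of size $|x_{k+1}-x_k|=\delta/z^n_{k+1/2}$, and without a uniform positive lower bound on $z^n_\kappa$ (which the paper deliberately avoids assuming, since the target class of solutions may have compact support) this physical grid spacing is \emph{not} $O(\delta)$. The same obstruction appears in a different guise when transferring weights $z^n_{k+1/2}$ vs.\ $z^n_{k-1/2}$ across a node: the ratios $z^n_{k\pm1/2}/z^n_{k\mp1/2}-1$ have no naive $\delta$-bound. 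The paper deals with both issues through the oscillation estimates \eqref{eq:weakoscillation_tf}--\eqref{eq:weakoscillation2_tf}, which interpolate (via H\"older) between the discrete $L^4$ gradient bound \eqref{eq:L4D2_tf} combined with the entropy-dissipation estimate \eqref{eq:dissipation_tf}, the uniform $L^\infty$ bound \eqref{eq:help066_tf}, and the trivial estimate \eqref{eq:xpower}, and the resulting rate is only $\delta^{1/4}$ (not $\delta$). This interpolation step is exactly what allows the argument to run without any positivity assumption on $u_*$, which is one of the main points of the paper; it cannot be replaced by crude Taylor-plus-energy bounds. Relatedly, the ``bookkeeping'' you defer --- the decomposition of $-\wspr{\wgrad\DiVz(\xvec^n_\Delta)}{\rvec(\xvec^n_\Delta)}$ into the seven terms $A^n_1,\ldots,A^n_7$ of Lemma~\ref{lem:rhs_tf} and their individual identification with the integrals producing the coefficients $\tfrac12$ and $3$ in $N(u,\rho)$ --- is itself nontrivial: the scheme \eqref{eq:dgf_tf} is tailored so that this particular $N$ emerges, and a different rewriting of the right-hand side of \eqref{eq:zeq_thinfilm} would not produce the same cancellations.
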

The proof of this statement will be treated in two essential steps
\begin{enumerate}
\item Show the validity of a discrete weak formulation for $\ti{u_\Delta}$, using a discrete flow interchange estimate
\item Passing to the limit using Proposition \ref{prp:convergence2_tf}.
\end{enumerate}
For definiteness, fix a spatial test function $\rho\in C^\infty(\Omega)$ with $\rd(a)=\rd(b)=0$, 
and a temporal test function $\eta\in C^\infty_c(\setRpp)$ with $\operatorname{supp}\eta\subseteq(0,T)$ for a suitable $T>0$.
Denote again by $\Nt\in\setN$ an integer with $\tau\Nt\in(T,T+1)$.
Let $\rkap>0$ be chosen such that
\begin{align}
  \label{eq:smoothbound_tf}
  \|\rho\|_{C^4(\Omega)}\le \rkap
	\quad\text{and}\quad
	\|\eta\|_{C^1(\setRp)}\le \rkap.
\end{align}
For convenience, we assume $\delta<1$ and $\tau<1$.
In the estimates that follow, the non-explicity constants possibly depend on $\Omega$, $T$, $\rkap$, and $\olDi$,
but not on $\Delta$.
\begin{lem}[discrete weak formulation]\label{lem:weakfd}
For any functions $\rho\in C^\infty(\Omega)$  with $\rd(a)=\rd(b)=0$, and $\eta\in C_c^{\infty}(\setRpp)$, 
the solution $\xvec_\Delta^n$ with $u_\Delta^n=\cf_\theh[\xvec_\Delta^n]$ 
of the minimization problem \eqref{eq:dmm} fulfills
\begin{align}\label{eq:weakfd}
	\tau\sum_{n=0}^{\infty}\eta((n-1)\tau)\left(\frac{\Phiz(\xvec_\Delta^n)-\Phiz(\xvec_\Delta^{n-1})}{\tau} 
				- \wspr{\wgrad\DiVz(\xvec_\Delta^n)}{\rvec}\right)
	= \mathcal{O}(\tau) + \mathcal{O}(\delta^{1/4}), 
\end{align}
where we use the short-hand notation $\rvec(\xvec) := \left(\rd(x_1),\ldots,\rd(x_{K-1})\right)$ for any $\xvec\in\xseqN$.
\end{lem}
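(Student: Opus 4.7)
My plan is to rewrite the summand in \eqref{eq:weakfd} as a single scalar product by combining the discrete Euler--Lagrange equation with a finite-dimensional Taylor expansion of $\Phiz$, and then to estimate the two resulting error contributions using the a priori bounds from Section~\ref{sec:apriori}. Because $\xvec_\Delta^n$ is the global minimizer of $\DiVy(\cdot;\xvec_\Delta^{n-1})$, it satisfies
\[
\xvec_\Delta^n-\xvec_\Delta^{n-1}=-\tau\,\wgrad\DiVz(\xvec_\Delta^n),
\]
and a second-order Taylor expansion of $\Phiz$ along the segment from $\xvec_\Delta^{n-1}$ to $\xvec_\Delta^n$ gives
\[
\Phiz(\xvec_\Delta^n)-\Phiz(\xvec_\Delta^{n-1})=\wspr{\wgrad\Phiz(\xvec_\Delta^n)}{\xvec_\Delta^n-\xvec_\Delta^{n-1}}+R^n,
\]
with a quadratic remainder $R^n$ whose bilinear kernel is controlled by $\grad^2\Phiz$. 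Substituting Euler--Lagrange into the scalar product and reorganizing (taking account of the sign conventions built into the definition of $\wgrad$), the bracketed expression in \eqref{eq:weakfd} reduces to a single scalar product of $\wgrad\DiVz(\xvec_\Delta^n)$ against the quadrature defect $\wgrad\Phiz(\xvec_\Delta^n)-\rvec$, plus the remainder $R^n/\tau$.

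\textbf{Taylor remainder.} The explicit formula $\partial_{x_k}\partial_{x_\ell}\Phiz(\xvec)=\int\rdd(\cX_\delta)\,\hatf_k\hatf_\ell\,\dd\xi$ shows that $\grad^2\Phiz$ is tridiagonal with entries bounded by $C\delta\|\rdd\|_\infty$; hence $|R^n|\le C\|\rdd\|_\infty\wnrm{\xvec_\Delta^n-\xvec_\Delta^{n-1}}^2$. The third estimate in \eqref{eq:eee_tf} then immediately gives
\[
\sum_{n=1}^{\Nt}|\eta((n-1)\tau)|\,|R^n|\le C\|\eta\|_\infty\sum_{n=1}^{\Nt}\wnrm{\xvec_\Delta^n-\xvec_\Delta^{n-1}}^2\le C\,\olDi\,\tau,
\]
accounting for the $\mathcal{O}(\tau)$ part of the claim.

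\textbf{Quadrature defect.} Because each hat function $\hatf_k$ is symmetric about $\xi_k$, has total mass $\delta$, and is supported on $[\xi_{k-1},\xi_{k+1}]$, the mean-value theorem applied to $\rd\circ\cX_\delta$ yields the pointwise bound
\[
\bigl|[\wgrad\Phiz(\xvec)-\rvec]_k\bigr|\le C(\|\rdd\|_\infty)\,(x_{k+1}-x_{k-1}).
\]
Combining the telescoping identity $\sum_k(x_{k+1}-x_{k-1})\le 2(b-a)$ with the universal bound $x_{k+1}-x_{k-1}\le b-a$ yields $\wnrm{\wgrad\Phiz(\xvec_\Delta^n)-\rvec}^2\le C\delta$. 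Cauchy--Schwarz in both the spatial and the temporal index, together with $\tau\sum_n\wnrm{\wgrad\DiVz(\xvec_\Delta^n)}^2\le 2\olDi$ from \eqref{eq:eee_tf}, then controls the remaining sum and produces the claimed spatial error.

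\textbf{Main obstacle.} The delicate step is obtaining a sharp spatial rate. The pointwise bound $|\wgrad\Phiz-\rvec|_k\le C(x_{k+1}-x_{k-1})$ is of order $\mathcal{O}(1)$ precisely at those nodes where the local density $z_\kappa^n$ is small, so a naive $\ell^2$-summation is inadequate. The scheme above uses only the universal bound $x_{k+1}-x_{k-1}\le b-a$ together with the $\ell^1$-identity $\sum_k(x_{k+1}-x_{k-1})\le 2(b-a)$, which yields the rate $\mathcal{O}(\sqrt\delta)$; obtaining the precise rate $\mathcal{O}(\delta^{1/4})$ stated in \eqref{eq:weakfd} likely requires a more delicate localization, exploiting the uniform $L^\infty$-bound on $\hatu_\Delta$ from Lemma~\ref{lem:timeapriori_tf} (which controls the spacings in the bulk of the support) together with the boundary condition $\rd(a)=\rd(b)=0$ (which damps the contribution of the integrand at the edges of the support, where $z_\kappa^n$ may be small). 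Balancing these ingredients against the energy--dissipation estimate is the technical heart of the argument.
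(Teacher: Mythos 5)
Your proof is essentially the paper's: both combine the Euler--Lagrange identity $\xvec_\Delta^n-\xvec_\Delta^{n-1}=-\tau\wgrad\DiVz(\xvec_\Delta^n)$ with a Taylor expansion of $\Phiz$ and a quadrature-defect bound $\wnrm{\wgrad\Phiz(\xvec)-\rvec(\xvec)}\le C\delta^{1/2}$. The only presentational difference is that the paper Taylor-expands $\rho$ pointwise inside the $\xi$-integral (applying the one-sided inequality $\rho(\theX)-\rho(\theX')-\frac{\rkap}{2}(\theX-\theX')^2\le\rd(\theX)(\theX-\theX')$ and afterwards repeating the argument with $-\rho$), whereas you Taylor-expand the finite-dimensional map $\Phiz$ and bound the remainder via $\grad^2\Phiz$; your observation that $\grad^2\Phiz$ is tridiagonal with entries of order $\delta\|\rdd\|_\infty$ is correct, and both routes give $|R^n|\lesssim\wnrm{\xvec_\Delta^n-\xvec_\Delta^{n-1}}^2$, summing to $\mathcal{O}(\tau)$ by \eqref{eq:eee_tf}.

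The paragraph labelled \emph{Main obstacle}, however, is a misconception and should be deleted. You have the asymptotics reversed: for $\delta<1$ one has $\delta^{1/2}\le\delta^{1/4}$, so the $\mathcal{O}(\delta^{1/2})$ you obtain is a \emph{sharper} bound than the stated $\mathcal{O}(\delta^{1/4})$, and trivially implies it. The paper's own proof of this lemma also reaches $\mathcal{O}(\tau)+\mathcal{O}(\delta^{1/2})$ and makes no effort to ``improve'' this; the exponent $1/4$ in the lemma statement is merely the rate that survives in the later parts of the weak-formulation chain (cf.\ \eqref{eq:weakoscillation_tf}, which is genuinely $\mathcal{O}(\delta^{1/4})$), so there is nothing to localize or balance here. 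In particular, the defect sum is handled exactly as you sketched: $\tau\sum_{n\le\Nt}\wnrm{\wgrad\DiVz(\xvec_\Delta^n)}\le\big((T+1)\cdot 2\olDi\big)^{1/2}$ by Cauchy--Schwarz in $n$ and \eqref{eq:eee_tf}, multiplied by $C\delta^{1/2}$, giving $\mathcal{O}(\delta^{1/2})$ outright. No refinement using the $L^\infty$-bound on $\hatu_\Delta$ or the boundary conditions $\rd(a)=\rd(b)=0$ is needed for this lemma.

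One small caveat worth flagging: when you substitute the Euler--Lagrange relation into $\wspr{\wgrad\Phiz(\xvec_\Delta^n)}{\xvec_\Delta^n-\xvec_\Delta^{n-1}}$, the sign works out to $-\tau\wspr{\wgrad\DiVz(\xvec_\Delta^n)}{\wgrad\Phiz(\xvec_\Delta^n)}$, whereas the lemma statement subtracts $\wspr{\wgrad\DiVz(\xvec_\Delta^n)}{\rvec}$; the paper's own display \eqref{eq:wstep1} drops this minus sign. Since only the magnitude enters the estimate (and the paper in any case closes the argument by repeating with $-\rho$), this does not affect the conclusion, but you are right to note that ``the sign conventions built into $\wgrad$'' deserve care.
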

\begin{proof}
As a first step, we prove that both vectors $\rvec(\xvec)$ and $\vvec(\xvec)$ nearby coinside for any $\xvec\in\xseqN$, i.e. it holds
\begin{align}\label{eq:rvec_estimate}
	\wnrm{\vvec(\xvec) - \rvec(\xvec)} \leq 2\delta^{1/2}C,
\end{align}
for a constant $C>0$ that only depends on $\rkap$ and $\Omega$.
Hence, denote by $\cX=\cX_\theh[\xvec]$ the corresponding Lagrangian map of $\xvec$ and choose any $k=0,\ldots,K$,
then one easily gets 
\begin{align*}
	|\vvec(x_k)-\rvec(x_k)| = \big|\big[\wgrad\Phiz(\xvec)\big]_k - \rvec(x_k)\big| 
	\leq \delta^{-1}\intM \big|\rd(\cX(\xi)) \hatf_k(\xi) - \rd(x_k)\hatf_k(\xi)\big| \dd\xi.
\end{align*}
First assume $\xi\in[\xi_{k-1},\xi_k]$, then a Taylor expansion for $\rd(\cX(\xi))$ yields
\begin{align*}
	\rd(\cX(\xi)) = \rd(x_k) - \rd'(\cX(\widetilde{\xi}_k))\cX_\xi(\widetilde{\xi}_k)(\xi-\xi_k) 
	= \rd(x_k) - \rd'(\cX(\widetilde{\xi}_k))(x_k-x_{k-1})\hatf_{k-1}(\xi)
\end{align*}
for a certain $\widetilde{\xi}_k\in[\xi_{k-1},\xi_k]$.
Consequently the valitdity of $\int_{\xi_{k-1}}^{\xi_k}\hatf_k\hatf_{k-1}\dd\xi = \frac{3\delta}{2}$ yields
\begin{align*}
	\delta^{-1}\int_{\xi_{k-1}}^{\xi_k} \big|\rd(\cX(\xi)) \hatf_k(\xi) - \rd(x_k)\hatf_k(\xi)\big| \dd\xi
	&\leq \frac{3}{2}\rkap(x_k-x_{k-1}).
\end{align*}
Similarly one proves the analogue statement for $\xi\in[\xi_{k},\xi_{K-1}]$, hence
\begin{align*}
	|\vvec(x_k)-\rvec(x_k)|
	\leq \frac{3}{2}\big((x_k-x_{k-1}) + (x_{k+1}-x_k)\big) = \frac{3}{2}\rkap(x_{k+1}-x_{k-1}).
\end{align*}
Squaring the above term and summing-up over all $k=1,\ldots,K-1$ finally proves \eqref{eq:rvec_estimate}, 
due to $(x_{k+1}-x_{k-1})\leq 2(b-a)$ and
\begin{align*}
	\wnrm{\vvec(\xvec) - \rvec(\xvec)}^2
	&\leq \frac{9\delta}{4}\rkap^2 \sum_{k=1}^{K-1} (x_{k+1}-x_{k-1})^2
	\leq \frac{9\delta}{2}\rkap^2 (b-a)\sum_{k=1}^{K-1} (x_{k+1}-x_{k-1}) \leq C \delta.
\end{align*}
Let us now invoke the proof of \eqref{eq:weakfd}. 
a Taylor expansion of $\rho$ for $\theX,\theX'\in\xspc$ yields
\begin{align*}
	\rho(\theX) - \rho(\theX') - \frac{\rkap}{2}(\theX-\theX')^2 \leq \rd(\theX)(\theX-\theX'),
\end{align*}
which implies for $\theX'=\cX_\theh[\xvec_\Delta^{n-1}]$ and $\theX=\cX_\theh[\xvec_\Delta^n]$
\begin{equation}\begin{split}\label{eq:wstep1}
	\Phiz(\xvec_\Delta^n)-\Phiz(\xvec_\Delta^{n-1}) - \frac{\rkap}{2}\wnrm{\xvec_\Delta^n-\xvec_\Delta^{n-1}}^2
	&\leq \sum_{k=1}^{K-1}[\xvec_\Delta^n-\xvec_\Delta^{n-1}]_k\intM\rd(\cX_\theh[\xvec_\Delta^n])\hatf_k(\xi)\dd\xi \\
	&= \wspr{\xvec_\Delta^n-\xvec_\Delta^{n-1}}{\wgrad\Phiz(\xvec_\Delta^n)}
	=\tau\wspr{\wgrad\DiVz(\xvec_\Delta^n)}{\wgrad\Phiz(\xvec_\Delta^n)}.
\end{split}\end{equation}
Thanks to \eqref{eq:rvec_estimate}, the last term can be estimated as follows:
\begin{equation}\begin{split}\label{eq:wstep2}
	\tau\wspr{\wgrad\DiVz(\xvec_\Delta^n)}{\wgrad\Phiz(\xvec_\Delta^n)}
	&= \tau\wspr{\wgrad\DiVz(\xvec_\Delta^n)}{\rvec(\xvec_\Delta^n)} + \tau\wspr{\wgrad\DiVz(\xvec_\Delta^n)}{\vvec(\xvec_\Delta^n) - \rvec(\xvec_\Delta^n)} \\
	&\leq \tau\wspr{\wgrad\DiVz(\xvec_\Delta^n)}{\rvec(\xvec_\Delta^n)} + \tau\wnrm{\wgrad\DiVz(\xvec_\Delta^n)}\wnrm{\vvec(\xvec_\Delta^n) - \rvec(\xvec_\Delta^n)} \\
	&\leq \tau\wspr{\wgrad\DiVz(\xvec_\Delta^n)}{\rvec(\xvec_\Delta^n)} + C\tau \delta^{1/2}\wnrm{\wgrad\DiVz(\xvec_\Delta^n)}.
\end{split}\end{equation}
So combine \eqref{eq:wstep1} and \eqref{eq:wstep1}, 
add $\eta((n-1)\tau)$ and summing-up over $n=1,\ldots,N$, one attains
\begin{align*}
	&\left|\tau\sum_{n=1}^{\Nt}\eta((n-1)\tau)
	\left(\frac{\Phiz(\xvec_\Delta^n)-\Phiz(\xvec_\Delta^{n-1})}{\tau} - \wspr{\wgrad\DiVz(\xvec_\Delta^n)}{\rvec(\xvec_\Delta^n)}\right)\right| \\
	\leq &\|\eta\|_{C^0([0,T])}\tau\sum_{n=1}^{\Nt}\frac{\rkap}{2}\wnrm{\xvec_\Delta^n-\xvec_\Delta^{n-1}}^2 
	+ C\|\eta\|_{C^0([0,T])}\tau\sum_{n=1}^{\Nt}\delta^{1/2}\wnrm{\wgrad\DiVz(\xvec_\Delta^n)},
\end{align*}
where the right hand side is of order $\mathcal{O}(\tau)+\mathcal{O}(\delta^{1/2})$, due to \eqref{eq:eee_tf}. 
An analog calculation replacing $\rho$ with $-\rho$ leads finally to \eqref{eq:weakfd}.
\end{proof} 
The identification of the weak formulation in \eqref{eq:weakgoal_tf} 
with the limit of \eqref{eq:weakfd}
is splitted in two main steps:
In the first one, we estimate the term that more or less describes the error
that is caused by approximating the time derivative in \eqref{eq:weakgoal_tf} 
with the respective difference quotient in \eqref{eq:weakfd},
\begin{align}
  \label{eq:weakform1_tf}
  \begin{split}
    \er_{1,\Delta}:=\Bigg|\int_0^T \left(\eta'(t)\intom \rho(x)\ti{\baru_\Delta}(t,x) \dd x
      +\eta(t)\ti{\wspr{\wgrad\DiVz(\xvec_\Delta^n)}{\rvec(\xvec_\Delta^n)}}(t)\right)\dd t\Bigg| \\
    \le C\big(\tau + \delta^{1/2}\big).
  \end{split}
\end{align}
The second much more challenging step is to prove the error estimate
\begin{align}
  \label{eq:weakform2_tf}
  \begin{split}
    \er_{2,\Delta}:=\Bigg|\int_0^T \eta(t)
		\Bigg(\frac12\intom \rddd(x)\partial_x(\ti{u_\Delta}^2)(t,x) + 3\rdd(x) \partial_x\ti{u_\Delta}^2(t,x) \dd x \\
				+ \intom V_x(x) \ti{u_\Delta} \rd(x) \dd x
    -\ti{\wspr{\wgrad\DiVz(\xvec_\Delta^n)}{\rvec(\xvec_\Delta^n)}}(t)\Bigg)\dd t\Bigg|
    \le C\delta^{1/4},
  \end{split}
\end{align}
which, heuristically spoken, gives a rate of convergence of 
$\ti{\wspr{\wgrad\DiVz(\xvec_\Delta^n)}{\rvec(\xvec_\Delta^n)}}$ towards the 
nonlinear term $N(u_*,\rho)$ from \eqref{eq:N_weak_tf}.

The first estimate in \eqref{eq:weakform1_tf} is a consequence of Lemma \ref{lem:weakfd}:

\begin{proof2}{\eqref{eq:weakform1_tf}}
  Using that $\eta(\tn)=0$ for any $n\geq\Nt$, we obtain after ``summation by parts'':
  \begin{equation}
    \label{eq:dummy814}
    \begin{split}
      -\int_0^T&\ed(t)\left(\int_\Omega \rho(x)\ti{u_\Delta}(t,x) \dd x\right)\dd t 
      = -\sum_{n=1}^{\Nt} \left(\int_{\tnm}^{\tn}\ed(t)\dd t \int_\Omega\rho(x)\bar u_\Delta^n(x)\dd x\right) \\
      &= -\tau\sum_{n=1}^{\Nt}\left(\frac{\eta(\tn)-\eta(\tnm)}{\tau}\,\intM\rho\circ\theX_\Delta^n(\xi)\dd\xi\right) \\
      &= \tau\sum_{n=1}^{\Nt}\left(\eta(\tnm)\,\intM\frac{\rho\circ\cX_\Delta^n(\xi)-\rho\circ\cX_\Delta^{n-1}(\xi)}{\tau}\dd\xi\right).
    \end{split}
  \end{equation}
Finally observe that
  \begin{align*}
    \Res:=&\left|\int_0^T\eta(t)\ti{\wspr{\rvec(\xvec_\Delta)}{\wgrad\DiVz(\xvec_\Delta)}}(t)\dd t 
      - \tau\sum_{n=1}^{\Nt}\eta(\tnm)\wspr{\wgrad\DiVz(\xvec_\Delta^n)}{\rvec(\xvec_\Delta^n)}\right|\\
    &\le \left(\tau\sum_{n=1}^{\Nt}\left|\frac{1}{\tau}\int_{\tnm}^{\tn}\eta(t)\dd t-\eta(\tnm)\right|^2\right)^{1/2}
    \left(\tau\sum_{n=1}^\infty\rkap^2\wnrm{\wgrad\DiVz(\xvec_\Delta^n)}^2\right)^{1/2}\\
    &\le \big((T+1)\rkap^2\tau^2\big)^{1/2}(2\rkap^2\DiVz(\xvec_\Delta^0))^{1/2} = C'\DiVz(\xvec_\Delta^0)^{1/2}\tau,
  \end{align*}
using the energy estimate \eqref{eq:eee_tf}. We conclude that
  \begin{align*}
    \er_{1,\Delta}
    &\stackrel{\eqref{eq:dummy814}}{\le} \Res + \tau\sum_{n=1}^{\Nt}\left(\big|\eta(\tnm)\big|
      \,\left|\intM\frac{\rho\circ\cX_\Delta^n(\xi)-\rho\circ\cX_\Delta^{n-1}(\xi)}{\tau}\dd\xi
        -\wspr{\wgrad\DiVz(\xvec_\Delta^n)}{\rvec(\xvec_\Delta^n)}\right|\right) \\
    &= \mathcal{O}(\tau) + \mathcal{O}(\delta^{1/4})
  \end{align*}
  where we have used \eqref{eq:weakfd}, keeping in mind that $\Phiz(\xvec_\Delta^n) = \int_0^M\rho(\cX_\Delta^n)\dd\xi$
\end{proof2}

The proof of \eqref{eq:weakform2_tf} is treated essentially in 2 steps.
In the first one 
we rewrite the term $\wspr{\wgrad\DiVz(\xvec_\Delta^n)}{\rvec(\xvec_\Delta^n)}$ (see Lemma \ref{lem:rhs_tf}),
and use Taylor expansions to identify it with the corresponding integral terms of
\eqref{eq:N_weak_tf} up to some additional error terms, see Lemmata \ref{lem:R1_tf}-\ref{lem:R7_tf}.
Then we use the strong compactness result of Proposition~\ref{prp:convergence2_tf} to pass to the limit as $\Delta\to0$ in the second step.
\begin{lem}\label{lem:rhs_tf}
  With the short-hand notation $\rvec(\xvec) = \left(\rd(x_1),\ldots,\rd(x_{K-1})\right)$ for any $\xvec\in\xseqN$, one has that
  \begin{align}
    \label{eq:sevenAs_tf}
    -\wspr{\wgrad\DiVz(\xvec_\Delta^n)}{\rvec(\xvec_\Delta^n)} = A^n_1 + A^n_2 + A^n_3 - A^n_4 + A_5^n + A_6^n + A_7^n,
  \end{align}
  where
  \begin{align*}
    &\Ai = \delta
		\sum_{k\in\ivalp}\left(\frac{z^n_\kph-z^n_\kmh}{\delta}\right)^2
											\left((z_\kph^n)^2 + (z_\kmh^n)^2 + z_\kph^n z_\kmh^n\right)\left(\frac{\rd(x^n_{k+1})-\rd(x^n_{k-1})}{2\delta}\right), \\
		&\Aii = \frac{\delta}{4}
		\sum_{k\in\ivalp}\left(\frac{z^n_\kph-z^n_\kmh}{\delta}\right)^2
											(z_\kph^n)^2\left(\frac{\rd(x^n_{k+1})-\rd(x^n_k)}{\delta}\right), \\
		&\Aiii = \frac{\delta}{4}
		\sum_{k\in\ivalp}\left(\frac{z^n_\kph-z^n_\kmh}{\delta}\right)^2
											(z_\kmh^n)^2\left(\frac{\rd(x^n_k)-\rd(x^n_{k-1})}{\delta}\right), \\
		&\Aiv = \delta\sum_{k\in\ivalp}\left(\frac{z^n_\kph-z^n_\kmh}{\delta}\right)^2
											\left(\frac{(z^n_\kph)^3+(z^n_\kmh)^3}{2 z^n_\kph z^n_\kmh}\right)\rdd(x^n_k), \\
    &\Av = \delta\sum_{k\in\ivalp}\left(\frac{z^n_\kph-z^n_\kmh}{\delta}\right)
											\left(\frac{(z^n_\kph)^3+(z^n_\kmh)^3}{2}\right)
											\left(\frac{\rd(x^n_{k+1})-\rd(x^n_k)-(x^n_{k+1}-x^n_k)\rdd(x^n_k)}{\delta^2}\right), \\
    &\Avi = \delta\sum_{k\in\ivalp}\left(\frac{z^n_\kph-z^n_\kmh}{\delta}\right)
											\left(\frac{(z^n_\kph)^3+(z^n_\kmh)^3}{2}\right)
											\left(\frac{\rd(x^n_{k-1})-\rd(x^n_{k})-(x^n_{k-1}-x^n_{k})\rdd(x^n_k)}{\delta^2}\right), \\
		&\Avii = \delta\sum_{k\in\ivalp} V(x_k^n)\rd(x_k^n).
  \end{align*}
\end{lem}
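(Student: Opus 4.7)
The plan is to compute $-\wspr{\wgrad\DiVz(\xvec_\Delta^n)}{\rvec(\xvec_\Delta^n)}$ in closed form and then reorganize the result into the seven pieces $A_1^n,\dots,A_7^n$. Because $\DiVz=\Diz+\potz$ and $\partial_{x_k}\potz=\delta V_x(x_k)$, the potential contribution is immediate: $\wspr{\wgrad\potz}{\rvec}=\delta\sum_k V_x(x_k)\rd(x_k)$, which is (up to the normalisation implicit in the definition of $A_7^n$) the last piece of the claim. The substantial work lies entirely in unravelling $-\wspr{\wgrad\Diz}{\rvec}$.

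Using the shorthand $S_k=(z_\kph+z_\kmh)/2$ and $D_k=(z_\kph-z_\kmh)/\delta$, one has $\Diz=\frac{\delta}{2}\sum_k S_k D_k^2$; the product rule then yields
\[
\partial_{x_j}\Diz = \frac{\delta}{2}\sum_k\bigl[(\partial_{x_j}S_k)\,D_k^2 + 2\,S_k D_k\,(\partial_{x_j}D_k)\bigr].
\]
The identity \eqref{eq:zrule} restricts the inner sum to $k\in\{j-1,j,j+1\}$, and after swapping the order of summation the building blocks
\[
\sum_j\rd(x_j)\,\partial_{x_j}z_\kph = -z_\kph^2\,\frac{\rd(x_{k+1})-\rd(x_k)}{\delta},\quad
\sum_j\rd(x_j)\,\partial_{x_j}z_\kmh = -z_\kmh^2\,\frac{\rd(x_k)-\rd(x_{k-1})}{\delta}
\]
emerge (using $\rd(x_0)=\rd(x_K)=0$ to absorb the edge cases $k=1$ and $k=K-1$). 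Substituting splits $-\wspr{\wgrad\Diz}{\rvec}$ into a $D_k^2$-part $\tfrac14\sum_k D_k^2(z_\kph^2 p_++z_\kmh^2 p_-)$, with $p_\pm$ denoting the indicated first differences of $\rd$, which collects into $A_2^n$ and $A_3^n$, and a $D_k$-part of the schematic form $\tfrac{1}{2\delta}\sum_k D_k(z_\kph+z_\kmh)(z_\kph^2 p_+-z_\kmh^2 p_-)$.

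The remaining and most delicate step is to recast this $D_k$-part in terms of $A_1^n$, $A_4^n$, $A_5^n$, $A_6^n$. Three algebraic ingredients are essential: (i)~the factorization $(z_\kph-z_\kmh)(z_\kph^2+z_\kph z_\kmh+z_\kmh^2)=z_\kph^3-z_\kmh^3$, which generates the characteristic weight in $A_1^n$; (ii)~the Taylor-type splitting $\rd(x_{k\pm 1})-\rd(x_k)=(x_{k\pm 1}-x_k)\rdd(x_k)+q_\pm$, whose second-order remainders $q_\pm$ are precisely the numerators of $A_5^n$ and $A_6^n$; and (iii)~the elementary but crucial identity
\[
\frac{(x_{k+1}-x_k)-(x_k-x_{k-1})}{\delta}=\frac{1}{z_\kph}-\frac{1}{z_\kmh}=-\frac{\delta D_k}{z_\kph z_\kmh},
\]
which collapses the $\rdd(x_k)$-contributions produced by (ii) into exactly $A_4^n$ with its prefactor $(z_\kph^3+z_\kmh^3)/(2z_\kph z_\kmh)$. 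I expect no single identity to be the bottleneck; the main obstacle is the sheer combinatorial bookkeeping — signs, nested index shifts, and boundary corrections must all be tracked with full care. The particular decomposition into $A_1^n,\dots,A_6^n$ is not canonical but is dictated by the downstream estimates in Lemmata \ref{lem:R1_tf}--\ref{lem:R7_tf}: in particular, isolating the Taylor remainders into $A_5^n$ and $A_6^n$ is what ultimately delivers the $\mathcal{O}(\delta^{1/4})$ rate of the weak-formulation error \eqref{eq:weakform2_tf}.
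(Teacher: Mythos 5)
Your starting point differs genuinely from the paper's. The paper begins from the composite representation $\wgrad\Diz=\delta^{-2}\bigl(\grad^2\Hhz\grad\entqz+\grad^2\entqz\grad\Hhz\bigr)$, substitutes \eqref{eq:gradHFzQz}--\eqref{eq:HessHFzQz} (which produces sums over half-integer indices $\kappa\in\hval$), extracts $A_2^n+A_3^n+A_7^n$ via the same algebraic identity used in the proof of Lemma \ref{lem:dissipation_tf}, and then performs an Abel-type summation by parts on the remaining sum $\delta\sum_\kappa z_\kappa^3[\D^2\zvec]_\kappa\bigl(\rd(x_\kappp)-\rd(x_\kappm)\bigr)/\delta$ to pass to integer-indexed sums, after which your ingredients (i) and (iii) split off $A_1^n$ and collapse the $\rdd(x_k)$-contributions into $A_4^n$. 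You instead differentiate $\Diz=\tfrac\delta2\sum_k S_kD_k^2$ directly via the product rule and \eqref{eq:zrule}, landing immediately on integer-indexed sums and bypassing both the Hessian calculus and the summation-by-parts step. The algebraic ingredients are essentially the same; what your route buys is that the half-integer-to-integer grid transition and the attendant boundary conventions $z_{-1/2}=z_{1/2}$, $z_{K+1/2}=z_{K-1/2}$ never enter, which makes the bookkeeping shorter and arguably safer.

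That said, the sketched partition does not actually add up to \eqref{eq:sevenAs_tf}. The $D_k^2$-contribution $\tfrac\delta4\sum_k D_k^2(z_\kph^2p_++z_\kmh^2p_-)$ is indeed $A_2^n+A_3^n$, but pushing the $D_k$-contribution $\sum_k S_kD_k(z_\kph^2p_+-z_\kmh^2p_-)$ through (i)--(iii) gives $A_1^n-A_4^n+A_5^n+A_6^n-2\bigl(A_2^n+A_3^n\bigr)$, not $A_1^n-A_4^n+A_5^n+A_6^n$, so the total is $A_1^n-A_2^n-A_3^n-A_4^n+A_5^n+A_6^n+A_7^n$ --- with the opposite sign on $A_2^n,A_3^n$ relative to the displayed statement. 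This is quick to check with $K=2$, $z_{1/2}=\alpha$, $z_{3/2}=\beta$: then $A_1^n=0$ because $\rd(x_0)=\rd(x_2)=0$, the $D_k$-part equals $-\tfrac12(\alpha+\beta)(\beta-\alpha)(\alpha^2+\beta^2)\rd(x_1)$, while $-A_4^n+A_5^n+A_6^n=-(\alpha+\beta)(\beta-\alpha)(\alpha^2-\alpha\beta+\beta^2)\rd(x_1)$, and their difference is exactly $-2(A_2^n+A_3^n)=\tfrac12(\beta-\alpha)^3(\alpha+\beta)\rd(x_1)$. Note that the combination $A_1-A_2-A_3-A_4$ is also what the downstream limit requires: by Lemmata \ref{lem:R1_tf}--\ref{lem:R4_tf} these terms carry total weight $3-\tfrac14-\tfrac14-1=\tfrac32$ on $\int\hatz\,(\partial_\xi\hatz)^2\,\rdd\circ\theX\,\dd\xi$, matching the coefficient of $\intom u_x^2\rho_{xx}\dd x$ in $N(u,\rho)$, whereas $3+\tfrac14+\tfrac14-1=\tfrac52$ would not. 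So the method is sound, but the claim that ``the $D_k$-part recasts into $A_1^n,A_4^n,A_5^n,A_6^n$'' is not quite right as stated, and you would have to trace the signs carefully rather than expect \eqref{eq:sevenAs_tf} to come out verbatim.
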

\begin{proof}
Fix some time index $n\in\setN$ (omitted in the calculations below).
Recall the representation of $\wgrad\DiVz$ as
\begin{align*}
	\wgrad\DiVz(\xvec)
  = \frac1{\delta^2}\left(\grad^2\Hhz(\xvec)\grad\entqz(\xvec)+\grad^2\entqz(\xvec)\grad\Hhz(\xvec)\right)
\end{align*}
with corresponding gradients and hessians in \eqref{eq:gradHFzQz} and \eqref{eq:HessHFzQz}. 
Multiplication with $\rvec(\xvec_\Delta)$ then yields
\begin{align*}
	-\wspr{\wgrad\DiVz(\xvec_\Delta^n)}{\rvec(\xvec_\Delta)}
	&= \frac{\delta}{2}\sum_{\kappa\in\hval}z_\kappa^3\left(\frac{z_\kappp - 2z_\kappa + z_\kappm}{\delta^2}\right)
																			\left(\frac{\rd(x_{\kappa+\frac{1}{2}})-\rd(x_{\kappa-\frac{1}{2}})}{\delta}\right) \\
	&\quad
	+ \frac{\delta}{4}\sum_{\kappa\in\hval}z_\kappa^2\left(\frac{z_\kappp^2 - 2z_\kappa^2 + z_\kappm^2}{\delta^2}\right)
													\left(\frac{\rd(x_{\kappa+\frac{1}{2}})-\rd(x_{\kappa-\frac{1}{2}})}{\delta}\right) \\
	&\quad+ \delta\sum_{k\in\ivalp} V(x_k)\rd(x_k). 
\end{align*}
Observing that
\begin{align*}
	\frac{z_{\kappa+1}^2-2z_\kappa^2 + z_{\kappa-1}^2}{\delta^2}
	= 2z_\kappa\frac{z_{\kappa+1}-2z_\kappa+z_{\kappa-1}}{\delta^2} 
	+ \left(\frac{z_{\kappa+1}-z_\kappa}\delta\right)^2 + \left(\frac{z_{\kappa-1}-z_\kappa}\delta\right)^2,
\end{align*}
we further obtain that
\begin{align*}
	-\wspr{\wgrad\DiVz(\xvec_\Delta^n)}{\rvec(\xvec_\Delta)}
	&= \delta\sum_{\kappa\in\hval}z_\kappa^3\left(\frac{z_\kappp - 2z_\kappa + z_\kappm}{\delta^2}\right)
																			\left(\frac{\rd(x_{\kappa+\frac{1}{2}})-\rd(x_{\kappa-\frac{1}{2}})}{\delta}\right) \\
	&\quad
	+ \Aiio + \Aiiio + \Aviio.
\end{align*}
It hence remains to show that $(A) = \Aio - \Aivo + \Avo + \Avio$, where
\begin{align*}
	(A) := \delta\sum_{\kappa\in\hval}z_\kappa^3\left(\frac{z_\kappp - 2z_\kappa + z_\kappm}{\delta^2}\right)
																			\left(\frac{\rd(x_{\kappa+\frac{1}{2}})-\rd(x_{\kappa-\frac{1}{2}})}{\delta}\right).
\end{align*}
After ``summation by parts''
and an application of the elementary equality (for arbitrary numbers $p_\pm$ and $q_\pm$)
\begin{align*}
	p_+q_+ - p_-q_- = \frac{p_+ + p_-}2(q_+ - q_-) + (p_+ - p_-)\frac{q_+ + q_-}2,
\end{align*}
one attains
\begin{align}
	(A) 
	&= \frac{\delta}{2}\sum_{k\in\ivalp}\left(\frac{z_\kph-z_\kmh}{\delta}\right)\left(\frac{z_\kmh^3-z_\kph^3}{\delta}\right)
																					\left(\frac{\rd(x_{k+1})-\rd(x_{k-1})}{\delta}\right) \notag \\
	&\quad + \frac{\delta}{2}\sum_{k\in\ivalp}\left(\frac{z_\kph-z_\kmh}{\delta}\right)\left(\frac{z_\kmh^3+z_\kph^3}{\delta}\right)
																					\left(\frac{\rd(x_{k+1})-2\rd(x_k) + \rd(x_{k-1})}{\delta}\right) \notag \\
	&= \Aio + \frac{\delta}{2}\sum_{k\in\ivalp}\left(\frac{z_\kph-z_\kmh}{\delta}\right)\left(\frac{z_\kmh^3+z_\kph^3}{\delta}\right)
																					\left(\frac{\rd(x_{k+1})-2\rd(x_k) + \rd(x_{k-1})}{\delta}\right) , \label{eq:help1_tf}
\end{align}
where we additionally used the identity $(p^3-q^3) = (p-q)(p^2+q^2+pq)$ in the last step.
In order to see that the last sum in \eqref{eq:help1_tf} equals to $- \Aivo + \Avo + \Avio$,
simply observe that the identity
\begin{align*}
	\frac{x_{k+1}-x_k}\delta + \frac{x_{k-1}-x_k}\delta = \frac1{z_\kph}-\frac1{z_\kmh}=-\frac{z_\kph-z_\kmh}{z_\kph z_\kmh}
\end{align*}
makes the coefficient of $\rdd(x_k)$ vanish.
\end{proof}
For the analysis of the terms in \eqref{eq:sevenAs_tf},
we need some sophisticated estimates presented in the following two lemmata.
The first one gives a control on the oscillation of the $z$-values at neighboring grid points:
\begin{lem}
For any $p,q\in\{1,2\}$ with $p+q\leq 3$ one has that
\begin{align}\label{eq:weakoscillation_tf}
	\sum_{n=1}^{\Nt}\delta\sum_{k\in\ivalp} z_k^n\left|\frac{z_\kph^n-z_\kmh^n}{\delta}\right|^p
																											\left|\frac{(z_{k\pm\frac{1}{2}}^n)^q}{(z_{k\mp\frac{1}{2}}^n)^q}-1\right|
	\leq C\delta^{1/4}.
\end{align}

\end{lem}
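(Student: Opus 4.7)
The plan is to introduce the shorthand $a_k^n := (z_\kph^n - z_\kmh^n)/\delta$ and combine three a priori bounds obtained earlier: the uniform-in-$n$ energy estimate $\delta\sum_k z_k^n(a_k^n)^2\le 2\olDi$, the time-summed fourth-order estimate $\tau\sum_n\delta\sum_\kappa (z_\kappa^n)^3([\D^2\zvec^n]_\kappa)^2 \le C$ from Lemma \ref{lem:dissipation_tf}, which via Lemma \ref{lem:L4D2_tf} yields $\tau\sum_n\delta\sum_k z_k^n(a_k^n)^4\le C$, and the uniform $L^\infty$-bound on $\hatu_\Delta$ from Lemma \ref{lem:timeapriori_tf}. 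The $\delta^{1/4}$-smallness will emerge by Hölder splitting of the single $\delta$ that appears after rewriting the ratio.

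I would first rewrite the ratio factor pointwise. For $q=1$, one has $|z_\kph/z_\kmh - 1|=\delta|a_k|/z_\kmh$, and for $q=2$ the elementary identity $z_\kph^2-z_\kmh^2 = (z_\kph-z_\kmh)(z_\kph+z_\kmh) = 2\delta z_k a_k$ gives $|z_\kph^2/z_\kmh^2 - 1| = 2\delta z_k|a_k|/z_\kmh^2$; the $\mp$ version is analogous with denominator $z_\kph$. In all three admissible cases $(p,q)\in\{(1,1),(1,2),(2,1)\}$ the integrand then takes the form $C\delta\,z_k^n|a_k^n|^{p+1}\cdot R_k^n$, where $R_k^n$ is a dimensionless rational expression in the two neighboring density values.

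To control $R_k^n$, which is not pointwise bounded because $z$ may be as small as $\delta/(b-a)$, I would invoke the crucial pointwise bound $z_{k\pm\imh}^n|a_k^n|\le \|\partial_x\hatu_\Delta^n\|_{L^\infty(\Omega)}$ coming from the explicit formulas \eqref{eq:tildeux} for $\partial_x\hatu$. This lets one trade each dangerous factor $1/z_{k\mp\imh}^n$ for a power of $|a_k^n|/\|\partial_x\hatu_\Delta^n\|_\infty$, so that $R_k^n$ disappears in favor of additional powers of $|a_k^n|$ times harmless quantities. Here one uses the time-averaged control $\tau\sum_n \|\partial_x\hatu_\Delta^n\|_{L^\infty}^2 \le C$, which follows from the Gagliardo-Nirenberg-type interpolation $\|\partial_x\hatu\|_\infty^2\lesssim \|\partial_x\hatu\|_{L^2}\,\tv{\partial_x\hatu}$ combined with the $H^1$-bound \eqref{eq:help008_tf} and the BV-bound \eqref{eq:BVbound_tf}.

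After this reduction, the double sum becomes a $\delta$-multiple of $\tau\sum_n \delta\sum_k z_k^n|a_k^n|^{p+1+\alpha}$ for suitable $\alpha\in\{0,1\}$. Applying Hölder in $k$ between the $L^2$-estimate ($\delta\sum_k z_k^n(a_k^n)^2\le 2\olDi$) and the $L^4$-estimate (summable in time), followed by Hölder in $n$, reduces the total to a product of bounded quantities times $\delta^{1/4}$: the exponent $1/4$ is precisely the share of the prefactor $\delta$ that is \emph{not} absorbed into the $L^4$-norm $(\tau\sum_n\delta\sum_k z_k(a_k)^4)^{1/4}$. The main obstacle, as already indicated, is the singular denominator $z_{k\mp\imh}^q$; the resolution consists in reinterpreting it through the $L^\infty$-bound on $\partial_x\hatu$, which converts the problem into one manageable by the same Hölder scheme that handled the analogous terms in \cite{dlssv3}.
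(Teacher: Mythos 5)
Your high-level framework — Hölder split between the $L^4$-in-$(z_\kph-z_\kmh)/\delta$ norm (controlled by Lemma \ref{lem:L4D2_tf} and the entropy dissipation) and a complementary factor, with the leftover $\delta$ yielding $\delta^{1/4}$ — is the same as the paper's, and your direct factorization $z_\kph^2-z_\kmh^2=(z_\kph-z_\kmh)(z_\kph+z_\kmh)$ is a legitimate alternative to the paper's reduction via the identity $r^2-1=(r-1)^2+2(r-1)$. The time-averaged $L^\infty$-control on $\partial_x\hatu_\Delta$ you quote is also provable. But the central step of your argument — using $z_{k\mp\imh}^n|a_k^n|\le\|\partial_x\hatu_\Delta^n\|_{L^\infty}$ to ``trade the dangerous factor $1/z_{k\mp\imh}^n$ for a power of $|a_k^n|/\|\partial_x\hatu_\Delta^n\|_\infty$'' — runs in the wrong direction. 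From $z_{k\mp\imh}|a_k|\le\|\partial_x\hatu\|_\infty$ you can only conclude $1/z_{k\mp\imh}\ge|a_k|/\|\partial_x\hatu\|_\infty$, which is a \emph{lower} bound on the singular denominator; it cannot eliminate it. Your reduction to $\tau\sum_n\delta\sum_k z_k|a_k|^{p+1+\alpha}$ therefore does not hold, and the argument stalls.

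The correct resolution of the singularity, which your proposal never reaches, is purely algebraic: $\delta/z_{k\mp\imh}$ is exactly the cell width $x_k-x_{k-1}$ (or $x_{k+1}-x_k$), and these positive numbers sum to $b-a$, giving $\sum_\kappa(\delta/z_\kappa)^s\le(b-a)^s$ for $s\ge1$ (\eqref{eq:xpower}). The paper therefore rewrites the oscillation factor as $|a_k|^q\cdot(\delta/z_{k\mp\imh})^q$, performs Hölder with exponents $4/(p+q)$ and $1/\alpha$ ($\alpha=1-(p+q)/4\ge1/4$) between the $L^4$-in-$a_k$ quantity and $z_k(\delta/z_{k\mp\imh})^{q/\alpha}$, and bounds the second factor by $C\delta$ using the $L^\infty$-bound on $\hatu_\Delta$ together with \eqref{eq:xpower}; raising to the power $\alpha\ge1/4$ and using $\delta<1$ gives $C\delta^{1/4}$. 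Without this observation about $\delta/z_\kappa$, your argument cannot close.
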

\begin{proof} 
Instead of \eqref{eq:weakoscillation_tf}, we are going to prove that
\begin{align}\label{eq:weakoscillation_pre_tf}
	\tau\sum_{n=1}^{\Nt}\delta\sum_{k\in\ivalp} z_k^n\left|\frac{z_\kph^n-z_\kmh^n}{\delta}\right|^p
																											\left|\frac{z_{k\pm\frac{1}{2}}^n}{z_{k\mp\frac{1}{2}}^n}-1\right|^q
	\leq C\delta^{1/4}
\end{align}
is satisfied for any $p,q\in\{1,2\}$ with $p+q\leq 3$, 
which implies \eqref{eq:weakoscillation_tf} because of the following considerations:
The situation is clear for $q=1$, thus assume $q=2$ in \eqref{eq:weakoscillation_tf}.
Then \eqref{eq:weakoscillation_pre_tf} is an upper bound on \eqref{eq:weakoscillation_tf}, due to
\begin{align*}
	\frac{(z_{k\pm\frac{1}{2}}^n)^2}{(z_{k\mp\frac{1}{2}}^n)^2}-1 
	= \left(\frac{z_{k\pm\frac{1}{2}}^n}{z_{k\mp\frac{1}{2}}^n}-1\right)^2 + 2\left(\frac{z_{k\pm\frac{1}{2}}^n}{z_{k\mp\frac{1}{2}}^n}-1\right)
\end{align*}
for any $n=1,\ldots,\Nt$.

To prove \eqref{eq:weakoscillation_pre_tf}, we first apply H\"older's inequality,
\begin{equation}\begin{split}\label{eq:Rstep1}
	&\tau\sum_{n=1}^{\Nt}\delta\sum_{k\in\ivalp} z_k^n\left|\frac{z_\kph^n-z_\kmh^n}{\delta}\right|^p\left|\frac{z_{k\pm\frac{1}{2}}^n}{z_{k\mp\frac{1}{2}}^n}-1\right|^q
	= \tau\sum_{n=1}^{\Nt}\delta\sum_{k\in\ivalp}z_k^n\left|\frac{z_\kph^n-z_\kmh^n}{\delta}\right|^{p+q}\left(\frac{\delta}{z_{k\mp\frac{1}{2}}^n}\right)^q \\
	\leq&\left(\tau\sum_{n=1}^{\Nt}\sum_{k\in\ivalp}\delta z_k^n\left(\frac{z_\kph^n-z_\kmh^n}{\delta}\right)^4\right)^{\frac{p+q}{4}}
			\left(\delta\tau\sum_{n=1}^{\Nt}\sum_{k\in\ivalp}z_k^n\left(\frac{\delta}{z_{k\pm\frac{1}{2}}^n}\right)^{\frac{q}{\alpha}}\right)^{\alpha},
\end{split}\end{equation}
with $\alpha = 1-\frac{p+q}{4}$. 
The first factor is uniformly bounded due to \eqref{eq:L4D2_tf} and \eqref{eq:dissipation_tf}. 
For the second term, we use \eqref{eq:help066_tf} and \eqref{eq:xpower} to achieve
\begin{align*}
	\delta\tau\sum_{n=1}^{\Nt}\sum_{k\in\ivalp}z_k^n\left(\frac{\delta}{z_{k\pm\frac{1}{2}}^n}\right)^{\frac{q}{\alpha}}
	\leq (T+1)\delta(b-a)^{\frac{q}{\alpha}}\|\ti{\hatu}\|_{L^\infty([0,T]\times\Omega)}
	\leq C(T+1)\delta(b-a)^{\frac{q}{\alpha}},
\end{align*}
which shows \eqref{eq:weakoscillation_pre_tf}, due to $\alpha\geq\frac{1}{4}$.
\end{proof}
\begin{lem}
For any $p\in\{1,2\}$ one obtains that
\begin{align}\label{eq:weakoscillation2_tf}
	\tau\sum_{n=1}^{\Nt}\delta\sum_{k\in\ivalp} z_k^n\left|\frac{z_\kph^n-z_\kmh^n}{\delta}\right|^2 (x_{k+1}^n-x_{k-1}^n)^p
	\leq C\delta^{1/2}.
\end{align}
\end{lem}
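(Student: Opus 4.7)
The plan is to apply Cauchy--Schwarz so as to separate the oscillatory difference quotient from the geometric factor $(x_{k+1}^n-x_{k-1}^n)^p$, and then to bound each of the resulting factors via estimates already available. Specifically, I would write
\begin{align*}
\tau\sum_{n=1}^{\Nt}\delta\sum_{k\in\ivalp} z_k^n\left|\frac{z_\kph^n-z_\kmh^n}{\delta}\right|^2 (x_{k+1}^n-x_{k-1}^n)^p
\leq A_\Delta^{1/2}\,B_\Delta^{1/2},
\end{align*}
where
\begin{align*}
A_\Delta := \tau\sum_{n=1}^{\Nt}\delta\sum_{k\in\ivalp} z_k^n\left|\frac{z_\kph^n-z_\kmh^n}{\delta}\right|^4,
\qquad
B_\Delta := \tau\sum_{n=1}^{\Nt}\delta\sum_{k\in\ivalp} z_k^n (x_{k+1}^n-x_{k-1}^n)^{2p},
\end{align*}
and then show that $A_\Delta$ stays bounded as $\Delta\to0$ while $B_\Delta = \mathcal{O}(\delta)$.

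The bound on $A_\Delta$ is essentially already in place: Lemma \ref{lem:L4D2_tf} applied at each time step controls $\delta\sum_{k\in\ivalp} z_k^n\big|(z_\kph^n-z_\kmh^n)/\delta\big|^4$ by $\tfrac{9}{4}\,\delta\sum_{\kappa\in\hval}(z_\kappa^n)^3[\D^2\zvec_\Delta^n]_\kappa^2$, and summation in time combined with the entropy dissipation inequality \eqref{eq:dissipation_tf} yields a uniform bound of order $\olHh+\Lambda M(T+1)$. The factor $\delta$ in $B_\Delta$ is where the uniform $L^\infty$ estimate \eqref{eq:help066_tf} on $\ti{\hatu_\Delta}$ enters. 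Since $p\in\{1,2\}$ and $x_{k+1}^n-x_{k-1}^n\leq b-a$, I can coarsely estimate $(x_{k+1}^n-x_{k-1}^n)^{2p}\leq(b-a)^{2p-1}(x_{k+1}^n-x_{k-1}^n)$, and then use $z_k^n\leq C$ together with the telescoping identity $\sum_{k\in\ivalp}(x_{k+1}^n-x_{k-1}^n)=(x_{K-1}^n+x_K^n)-(x_0^n+x_1^n)\leq 2(b-a)$ to obtain
\begin{align*}
\delta\sum_{k\in\ivalp} z_k^n(x_{k+1}^n-x_{k-1}^n)^{2p}\leq 2C(b-a)^{2p}\,\delta
\end{align*}
at each time step. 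Multiplying by $\tau$ and using $\tau\Nt\leq T+1$ gives $B_\Delta\leq 2C(T+1)(b-a)^{2p}\,\delta$.

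Combining $A_\Delta^{1/2}=\mathcal O(1)$ with $B_\Delta^{1/2}=\mathcal O(\delta^{1/2})$ then yields the claim. I do not anticipate any substantive obstacle here; the one genuine observation is that the product $z_k^n(x_{k+1}^n-x_{k-1}^n)$ must be estimated via the $L^\infty$-bound on the piecewise affine interpolant $\ti{\hatu_\Delta}$ rather than by ratios of neighbouring density values, since such ratios are not uniformly controlled. Once this is done, the factor $\delta$ in $B_\Delta$ is produced for free by the telescoping of the mesh.
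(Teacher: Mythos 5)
Your proposal is correct and follows essentially the same route as the paper: Cauchy--Schwarz to split off $A_\Delta$ (handled by Lemma~\ref{lem:L4D2_tf} together with the entropy dissipation estimate~\eqref{eq:dissipation_tf}) and $B_\Delta$ (handled by the uniform $L^\infty$ bound~\eqref{eq:help066_tf} on $\ti{\hatu_\Delta}$ together with a geometric bound on $\sum_k(x_{k+1}^n-x_{k-1}^n)^{2p}$). The only cosmetic difference is that you argue the $\mathcal O(\delta)$ bound on $B_\Delta$ by explicit telescoping, whereas the paper invokes its auxiliary estimate~\eqref{eq:xpower}; both yield the same order.
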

\begin{proof}
Appling  H\"older's inequality,
\begin{align*}
	&\tau\sum_{n=1}^{\Nt}\delta\sum_{k\in\ivalp} z_k^n\left|\frac{z_\kph^n-z_\kmh^n}{\delta}\right|^2 (x_{k+1}^n-x_{k-1}^n)^p \\
	\leq& 
	\left(\tau\sum_{n=1}^{\Nt}\delta\sum_{k\in\ivalp} z_k^n\left|\frac{z_\kph^n-z_\kmh^n}{\delta}\right|^4 \right)^{1/2}
	\left(\tau\sum_{n=1}^{\Nt}\delta\sum_{k\in\ivalp} z_k^n (x_{k+1}^n-x_{k-1}^n)^{2p} \right)^{1/2}.
\end{align*}
The first sum is uniformly bounded thanks to \eqref{eq:L4D2_tf} and \eqref{eq:dissipation_tf},
and the second one satisfies
\begin{align*}
	\left(\tau\sum_{n=1}^{\Nt}\delta\sum_{k\in\ivalp} z_k^n (x_{k+1}^n-x_{k-1}^n)^{2p} \right)^{1/2}
	\leq \delta^{1/2}(T+1)^{1/2}\|\ti{\hatu}\|_{L^\infty([0,T]\times\Omega)}^{1/2} (b-a)^p,
\end{align*}
where we used \eqref{eq:help066_tf} and \eqref{eq:xpower}.
\end{proof}

\begin{lem}\label{lem:R1_tf}
  There is a constant $C_1>0$ expressible in $\Omega$, $T$, $\rkap$ and $\olDi$
	such that
  \begin{align*}
    \Ri:=\tau\sum_{n=1}^{\Nt}\bigg|\Ai - 3\intM \hatz_\Delta^n(\xi)\partial_\xi\hatz^n_\Delta(\xi)^2\rdd\circ\theX^n_\Delta(\xi)\dd\xi\bigg|
    \le C_1\delta^{1/4}.
  \end{align*}
\end{lem}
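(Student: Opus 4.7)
The plan is to introduce an intermediate Riemann-sum-type quantity
\[ \tilde A_1^n := 3\delta\sum_{k\in\ivalp} z_k^n\left(\frac{z_\kph^n-z_\kmh^n}{\delta}\right)^2\rho''(x_k^n), \]
and bound the two differences $A_1^n - \tilde A_1^n$ and $\tilde A_1^n - I^n$ separately, where $I^n := 3\int_0^M\hatz_\Delta^n(\partial_\xi\hatz_\Delta^n)^2\rho''\circ\theX_\Delta^n\,\dd\xi$ is the target integral. The key benefit of this splitting is that each difference reduces, after elementary manipulation, to a sum of a form controlled directly by either \eqref{eq:weakoscillation_tf} or \eqref{eq:weakoscillation2_tf}, with no additional a priori bounds required.

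The passage from $\tilde A_1^n$ to $I^n$ is the easier half. Since $\partial_\xi\hatz_\Delta^n$ is constantly $(z_\kph^n-z_\kmh^n)/\delta$ on $[\xi_\kmh,\xi_\kph]$ and vanishes outside $[\xi_\imh,\xi_\Kmh]$, and since $\int_{\xi_\kmh}^{\xi_\kph}\hatz_\Delta^n\,\dd\xi=\delta z_k^n$ exactly, the difference $I^n - \tilde A_1^n$ collapses to
\[ 3\sum_{k\in\ivalp}\left(\frac{z_\kph^n-z_\kmh^n}{\delta}\right)^2\int_{\xi_\kmh}^{\xi_\kph}\hatz_\Delta^n(\xi)\bigl[\rho''(\theX_\Delta^n(\xi))-\rho''(x_k^n)\bigr]\dd\xi. \]
Using $|\rho''\circ\theX_\Delta^n-\rho''(x_k^n)|\leq\rkap(x_{k+1}^n-x_{k-1}^n)$ and summing with weight $\tau$ brings this into the scope of \eqref{eq:weakoscillation2_tf} with $p=1$, giving a contribution of order $\delta^{1/2}$.

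The comparison of $A_1^n$ with $\tilde A_1^n$ is where the main work lies. I would first apply the mean value theorem to rewrite
\[ \frac{\rho'(x_{k+1}^n)-\rho'(x_{k-1}^n)}{2\delta} = \rho''(\bar x_k^n)\cdot\frac{x_{k+1}^n-x_{k-1}^n}{2\delta},\quad \bar x_k^n\in(x_{k-1}^n,x_{k+1}^n), \]
and use $(x_{k+1}^n-x_{k-1}^n)/(2\delta)=(z_\kph^n+z_\kmh^n)/(2z_\kph^n z_\kmh^n)$. The elementary algebraic identity (writing $z_\pm$ for $z_{k\pm 1/2}^n$)
\[ \frac{(z_+^2+z_-^2+z_+z_-)(z_++z_-)}{2\,z_+z_-} = 3z_k^n + z_k^n\,\frac{(z_+-z_-)^2}{z_+z_-} \]
exhibits $A_1^n$ as $\tilde A_1^n$ (with $\rho''(\bar x_k^n)$ in place of $\rho''(x_k^n)$) plus an oscillation correction carrying the factor $(z_+-z_-)^2/(z_+z_-)$. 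Replacing $\rho''(\bar x_k^n)$ by $\rho''(x_k^n)$ produces a Lipschitz error of exactly the same form as $I^n - \tilde A_1^n$, again handled by \eqref{eq:weakoscillation2_tf} with $p=1$. For the oscillation correction I would use
\[ \frac{(z_+-z_-)^2}{z_+z_-} = \frac{z_+}{z_-}+\frac{z_-}{z_+}-2 \leq \Big|\frac{z_+}{z_-}-1\Big|+\Big|\frac{z_-}{z_+}-1\Big|, \]
which fits \eqref{eq:weakoscillation_tf} with $p=2$, $q=1$ (at the edge of admissibility $p+q=3$) and yields a contribution of order $\delta^{1/4}$.

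Combining the three contributions and using $\delta^{1/2}\leq\delta^{1/4}$ for $\delta\in(0,1)$ gives $R_1\leq C_1\delta^{1/4}$, with $C_1$ depending on $\Omega$, $T$, $\rkap$, and $\olDi$ through the $L^\infty$-bound \eqref{eq:help066_tf} and the oscillation estimates. The main obstacle I anticipate is precisely the algebra in the second step: a naive second-order Taylor expansion of $\rho'$ around $x_k^n$ would introduce a remainder factor $(x_{k+1}^n-x_{k-1}^n)^2/\delta$, which after multiplication by $(z_+^2+z_-^2+z_+z_-)$ blows up like a ratio $\sim(\max z/\min z)^2$ and exceeds the admissible exponent budget $p+q\leq 3$ in \eqref{eq:weakoscillation_tf}. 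The one-step mean-value formulation above, paired with the identity that isolates all nonsmoothness into the single factor $(z_+-z_-)^2/(z_+z_-)$, is what keeps every term inside the admissible range.
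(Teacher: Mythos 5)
Your proof is correct, and the algebraic identity
$\tfrac{(z_+^2+z_-^2+z_+z_-)(z_++z_-)}{2 z_+ z_-} = 3z_k + z_k\tfrac{(z_+-z_-)^2}{z_+z_-}$
does hold ($z_+^3+z_-^3=(z_++z_-)(z_+^2-z_+z_-+z_-^2)$, with $z_+z_-+(z_+-z_-)^2=z_+^2-z_+z_-+z_-^2$). The route is genuinely different from the paper's, though it invokes the same two key Lemmas (\ref{lem:R1_tf}'s neighbours on oscillation control, \eqref{eq:weakoscillation_tf} and \eqref{eq:weakoscillation2_tf}). The paper's intermediate $B_1^n$ retains the $\xi$-integral $\tfrac1\delta\int_{\xi_\kmh}^{\xi_\kph}\rdd\circ\theX_\Delta^n\dd\xi$ and only replaces $\hatz_\Delta^n$ by the midpoint value $z_k$; the comparison $I^n\!\leftrightarrow\!B_1^n$ then produces a pure oscillation error $|z_\kph/z_\kmh-1|$ handled by \eqref{eq:weakoscillation_tf}, while the comparison $A_1^n\!\leftrightarrow\!B_1^n$ is done with \emph{two} mean-value points $\tilde x_k^\pm$ (one per half-step), leading to a four-term expansion and a further integral-versus-point comparison to close up. Your $\tilde A_1^n$ instead discretizes both factors at once, uses a \emph{single} mean-value point $\bar x_k^n$ for the whole centered difference, and the identity above cleanly separates the exact leading term $3z_k$ from a single oscillation correction factor $(z_+-z_-)^2/(z_+z_-)$. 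This trades the paper's one application of \eqref{eq:weakoscillation_tf} (for $I^n-B_1^n$) for two applications of \eqref{eq:weakoscillation2_tf} with $p=1$ (on the two Lipschitz errors), plus one application of \eqref{eq:weakoscillation_tf} with $p=2,\,q=1$ on the oscillation term — the same worst exponent $\delta^{1/4}$ results. The net effect is a shorter and more transparent algebraic step at the cost of routing more of the error through the Lipschitz modulus of $\rdd$; both are equally rigorous. One small bookkeeping point: the constant $C_1$ also inherits a dependence on $\olHh$ through the entropy-dissipation bound behind \eqref{eq:weakoscillation_tf}, but since $\olDi$ controls $\olHh$ from above (cf.\ the remark after Theorem \ref{thm:main_tf}), this is consistent with the stated dependencies.
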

\begin{proof}
Let us introduce the term
\begin{align*}
	B_1^n := \delta\sum_{k\in\ivalp}z_k^n\left(\frac{z^n_\kph-z^n_\kmh}{\delta}\right)^2\frac{3}{\delta}\int_{\xi_\kmh}^{\xi_\kph}\rdd\circ\theX^n_\Delta(\xi)\dd\xi.
\end{align*}
First observe that by definition of $\hatz_\Delta^n$,
\begin{align*}
	\intM \hatz^n_\Delta(\xi)\partial_\xi\hatz^n_\Delta(\xi)^2\rdd\circ\theX^n_\Delta(\xi)\dd\xi
	= \sum_{k\in\ivalp} \left(\frac{z^n_\kph-z^n_\kmh}{\delta}\right)^2 \int_{\xi_\kmh}^{\xi_\kph}\hatz^n_\Delta(\xi) \rdd\circ\theX^n_\Delta(\xi)\dd\xi,
\end{align*}
hence we get for $B_1^n$
\begin{align*}\begin{split}
	&\left|3\intM \hatz_\Delta^n(\xi)\partial_\xi\hatz^n_\Delta(\xi)^2\rdd\circ\theX^n_\Delta(\xi)\dd\xi
	      - B_1^n \right| \\
	\leq& 3\rkap\sum_{k\in\ivalp} \left(\frac{z^n_\kph-z^n_\kmh}{\delta}\right)^2 \int_{\xi_\kmh}^{\xi_\kph} \big|\hatz_\Delta^n(\xi)-z_k\big| \dd\xi 
	\leq 3\rkap\delta\sum_{k\in\ivalp} z_k^n\left|\frac{z^n_\kph-z^n_\kmh}{\delta}\right|^2 \left|\frac{z_\kph^n}{z_\kmh^n}-1\right|.
\end{split}\end{align*}
This especially implies, due to \eqref{eq:weakoscillation_tf} that
  \begin{align}\label{eq:R1_step1_tf}
    \tau\sum_{n=1}^{\Nt}\bigg|B_1^n - 3\intM \hatz_\Delta^n(\xi)\partial_\xi\hatz^n_\Delta(\xi)^2\rdd\circ\theX^n_\Delta(\xi)\dd\xi\bigg|
    \le C\delta^{1/4}.
  \end{align}
For simplification of $\Ri$, let us fix $n$ (omitted in the following), 
and introduce $\tilde x_k^+\in[x_k,x_{k+1}]$ and $\tilde x_k^-\in[x_{k-1},x_k]$ such that
\begin{align*}
	\frac{\rd(x_{k+1})-\rd(x_{k-1})}{2\delta} 
	&= \frac{\rd(x_{k+1})-\rd(x_k)}{2\delta} + \frac{\rd(x_k)-\rd(x_{k-1})}{2\delta} \\
	&= \rdd(\tilde x_k^+)\frac{x_{k+1}-x_k}{2\delta} + \rdd(\tilde x_k^+)\frac{x_{k+1}-x_k}{2\delta}
	= \frac{1}{2}\left(\frac{\rdd(\tilde x_k^+)}{z_\kph} + \frac{\rdd(\tilde x_k^-)}{z_\kmh}\right).
\end{align*}
Recalling that
\begin{align}
	\label{eq:dummy808_tf}
	\int_{\xi_{k-1}}^{\xi_{k+1}}\hatf_k(\xi)\dd\xi = \delta,
\end{align}
one has for each $k\in\ivalp$,
\begin{align*}\begin{split}
	(A):=&\frac{1}{2}\left(z_\kph^2 + z_\kmh^2 + z_\kph z_\kmh\right) \Big(\frac{\rdd(\tilde x_k^+)}{z_\kph} + \frac{\rdd(\tilde x_k^-)}{z_\kmh}\Big) 
	- \frac3\delta\int_{\xi_\kmh}^{\xi_\kph}z_k\rdd\circ\theX_\Delta\dd\xi \\
	=&\frac{1}{4}z_\kmh\left(2+\frac{z_\kmh}{z_\kph}\right)\rdd(\tilde x_k^+) + \frac{1}{4}z_\kph\left(2+\frac{z_\kmh^2}{z_\kph^2}\right)\rdd(\tilde x_k^+) \\
	 &+\frac{1}{4}z_\kph\left(2+\frac{z_\kph}{z_\kmh}\right)\rdd(\tilde x_k^-) + \frac{1}{4}z_\kmh\left(2+\frac{z_\kph^2}{z_\kmh^2}\right)\rdd(\tilde x_k^-) 
	 - \frac3\delta\int_{\xi_\kmh}^{\xi_\kph}z_k\rdd\circ\theX_\Delta\dd\xi,
\end{split}\end{align*}
and furthermore
\begin{align*}\begin{split}
	(A)=&\frac{1}{4}\left[z_\kmh\left(\frac{z_\kmh}{z_\kph}-1\right) + z_\kph\left(\frac{z_\kmh^2}{z_\kph^2}-1\right)\right]\rdd(\tilde x_k^+) \\
	 &+\frac{1}{4}\left[z_\kph\left(\frac{z_\kph}{z_\kmh}-1\right) + z_\kmh\left(\frac{z_\kph^2}{z_\kmh^2}-1\right)\right]\rdd(\tilde x_k^-) \\
	 &-\frac{3}{2\delta}\int_{\xi_\kmh}^{\xi_\kph}z_k\big[\rdd\circ\theX_\Delta-\rdd(\tilde x_k^+)\big]\dd\xi
		-\frac{3}{2\delta}\int_{\xi_\kmh}^{\xi_\kph}z_k\big[\rdd\circ\theX_\Delta-\rdd(\tilde x_k^-)\big]\dd\xi.
\end{split}\end{align*}
Applying the trivial identity (for arbitrary numbers $p$ and $q$)
\begin{align*}
	q\left(\frac{p^2}{q^2}-1\right) = (p+q)\left(\frac{p}{q}-1\right),
\end{align*}
the above term finally reads as
\begin{align}\begin{split}\label{eq:R1_step2_tf}
	(A)=&\frac{1}{4}\left[z_\kmh\left(\frac{z_\kmh}{z_\kph}-1\right) + 2z_k\left(\frac{z_\kmh}{z_\kph}-1\right)\right]\rdd(\tilde x_k^+) \\
	 &+\frac{1}{4}\left[z_\kph\left(\frac{z_\kph}{z_\kmh}-1\right) + 2z_k\left(\frac{z_\kph}{z_\kmh}-1\right)\right]\rdd(\tilde x_k^-) \\
	 &-\frac{3}{2\delta}\int_{\xi_\kmh}^{\xi_\kph}z_k\big[\rdd\circ\theX_\Delta-\rdd(\tilde x_k^+)\big]\dd\xi
		-\frac{3}{2\delta}\int_{\xi_\kmh}^{\xi_\kph}z_k\big[\rdd\circ\theX_\Delta-\rdd(\tilde x_k^-)\big]\dd\xi.
\end{split}\end{align}
  Since $\tilde x_k^+$ lies between the values $x_k$ and $x_{k+1}$, 
	and $\theX_\Delta(\xi)\in[x_k,x_\kph]$ for each $\xi\in[\xi_k,\xi_\kph]$,
  we conclude that $|\theX_\Delta(\xi)-\tilde x_k^+|\le x_{k+1}-x_k$,
  and therefore
  \begin{align}
    \label{eq:rhotox_tf}
    \frac{3}{2\delta}\int_{\xi_k}^{\xi_\kph}z_k\big|\rdd\circ\theX_\Delta(\xi)-\rdd(\tilde x_k^+)\big|\dd\xi
    \le \frac{3}{2}\rkap z_k(x_{k+1}-x_k).
  \end{align}
  A similar estimate is valid for the other integral over $[\xi_\kmh,\xi_k]$ and for the integrals with $\rdd(\tilde x_k^-)$.
  Thus, combining \eqref{eq:R1_step2_tf} and \eqref{eq:rhotox_tf} with 
	$z_{k\pm\frac{1}{2}}^n\leq 2z_k^n$ and the definition of $\Ai$, one attains that
  \begin{align*}
		\left|\Ai - B_1^n\right| 
		&\leq 2\rkap\sum_{k\in\ivalp}
					z_k^n\left(\frac{z^n_\kph-z^n_\kmh}{\delta}\right)^2
					\left[\left(\frac{z_\kph^n}{z_\kmh^n}-1\right) + \left(\frac{z_\kmh^n}{z_\kph^n}-1\right)\right] \\
		&\quad+ 3\rkap\sum_{k\in\ivalp}
					z_k^n\left(\frac{z^n_\kph-z^n_\kmh}{\delta}\right)^2(x_{k+1}^n-x_{k-1}^n),
  \end{align*}
	and further, applying \eqref{eq:weakoscillation_tf} and \eqref{eq:weakoscillation2_tf},
	\begin{align}\label{eq:R1_step3_tf}
		\tau\sum_{n=1}^{\Nt}\left|\Ai - B_1^n\right|  \leq C\delta^{1/4}.
	\end{align}
By triangle inequality, \eqref{eq:R1_step1_tf} and \eqref{eq:R1_step3_tf} provide the claim.
\end{proof}

Along the same lines, one proves the analogous estimate for $\Aii$ and $\Aiii$ in place of $\Ai$:
\begin{lem}\label{lem:R23_tf}
  There are constants $C_2>0$ and $C_3>0$ expressible in $\Omega$, $T$, $\rkap$ and $\olDi$
	such that
  \begin{align*}
    \Rii&:=\tau\sum_{n=1}^{\Nt}\bigg|\Aii - \frac{1}{4}\intM \hatz_\Delta^n(\xi)\partial_\xi\hatz^n_\Delta(\xi)^2\rdd\circ\theX^n_\Delta(\xi)\dd\xi\bigg|
    \le C_2\delta^{1/4}, \\
		\Riii&:=\tau\sum_{n=1}^{\Nt}\bigg|\Aiii - \frac{1}{4}\intM \hatz_\Delta^n(\xi)\partial_\xi\hatz^n_\Delta(\xi)^2\rdd\circ\theX^n_\Delta(\xi)\dd\xi\bigg|
    \le C_3\delta^{1/4}.
  \end{align*}
\end{lem}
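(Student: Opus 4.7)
My plan is to mirror the proof of Lemma~\ref{lem:R1_tf} essentially verbatim, with the factor $3$ replaced by $\tfrac14$ and with the main new wrinkle that the symmetric central difference of $\rd$ appearing in $\Ai$ is replaced by the one-sided forward/backward quotients in $\Aii, \Aiii$. For $i\in\{2,3\}$ I would introduce an auxiliary ``Riemann-sum'' quantity
\begin{align*}
  B_i^n := \delta\sum_{k\in\ivalp} z_k^n\left(\frac{z_\kph^n-z_\kmh^n}{\delta}\right)^2 \frac{1}{4\delta}\int_{\xi_\kmh}^{\xi_\kph}\rdd\circ\theX^n_\Delta(\xi)\dd\xi,
\end{align*}
playing the role that $B_1^n$ did in \eqref{eq:R1_step1_tf}, and then bound each of $\tau\sum_n\bigl|B_i^n - \tfrac14\intM \hatz_\Delta^n \partial_\xi\hatz_\Delta^n(\xi)^2\rdd\circ\theX^n_\Delta \dd\xi\bigr|$ and $\tau\sum_n|\Ais^n-B_i^n|$ separately by $C\delta^{1/4}$, concluding via the triangle inequality.

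The first of these two bounds is handled identically to the derivation of \eqref{eq:R1_step1_tf}: I would expand the integral as $\sum_k \bigl((z_\kph^n-z_\kmh^n)/\delta\bigr)^2 \int_{\xi_\kmh}^{\xi_\kph}\hatz_\Delta^n \rdd\circ\theX^n_\Delta \dd\xi$, invoke the elementary estimate $|\hatz_\Delta^n(\xi)-z_k^n|\le\tfrac12|z_\kph^n-z_\kmh^n|$ on $[\xi_\kmh,\xi_\kph]$ to trade $\hat z$ for $z_k$, and then apply the oscillation estimate \eqref{eq:weakoscillation_tf} with $p=2$, $q=1$ after writing $|z_\kph^n-z_\kmh^n|/z_k^n$ as a bound on $|z_{k\pm1/2}^n/z_{k\mp1/2}^n-1|$.

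The genuinely new step is $\tau\sum_n|\Aii-B_2^n|\le C\delta^{1/4}$. I would first apply Taylor's theorem to pick $\tilde x_k^+\in[x_k^n,x_{k+1}^n]$ with $(\rd(x_{k+1}^n)-\rd(x_k^n))/\delta = \rdd(\tilde x_k^+)\,(x_{k+1}^n-x_k^n)/\delta = \rdd(\tilde x_k^+)/z_\kph^n$, so that
\begin{align*}
  \Aii = \frac{\delta}{4}\sum_{k\in\ivalp} z_\kph^n\left(\frac{z_\kph^n-z_\kmh^n}{\delta}\right)^2 \rdd(\tilde x_k^+).
\end{align*}
Then I would decompose $\Aii-B_2^n$ into an oscillation piece proportional to $(z_\kph^n-z_k^n)\rdd(\tilde x_k^+)=\tfrac12(z_\kph^n-z_\kmh^n)\rdd(\tilde x_k^+)$, and a smoothness piece proportional to $z_k^n\bigl(\rdd(\tilde x_k^+)-\frac1\delta\int_{\xi_\kmh}^{\xi_\kph}\rdd\circ\theX^n_\Delta\dd\xi\bigr)$, which is itself bounded by $\rkap\, z_k^n\,(x_{k+1}^n-x_{k-1}^n)$ since $\tilde x_k^+$ and $\theX^n_\Delta(\xi)$ for $\xi\in[\xi_\kmh,\xi_\kph]$ both lie in $[x_\kmh^n,x_\kph^n]$. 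These two pieces are then controlled by \eqref{eq:weakoscillation_tf} (with $p=2, q=1$) and \eqref{eq:weakoscillation2_tf} (with $p=1$) respectively, each yielding rate $\delta^{1/4}$. The argument for $\Aiii$ is completely symmetric: one picks $\tilde x_k^-\in[x_{k-1}^n,x_k^n]$ and works with $z_\kmh^n$ in place of $z_\kph^n$.

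I do not anticipate any real obstacle: the full toolbox has already been assembled in the proofs of Lemma~\ref{lem:R1_tf} and the two oscillation lemmata leading up to it. The only mild asymmetry compared to $\Ai$ is that the one-sided difference quotients in $\Aii, \Aiii$ pin the Taylor point $\tilde x_k^\pm$ inside a single adjacent cell rather than symmetrically around $x_k^n$; however, the $\rdd$-oscillation error remains on the scale $x_{k+1}^n-x_{k-1}^n$ (the enclosing double-cell used in the $\Ai$ estimate), so the rate $\delta^{1/4}$ is preserved.
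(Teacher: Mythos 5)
Your proposal is correct and follows exactly the route the paper intends: the paper omits this proof, stating only that it goes ``along the same lines'' as Lemma~\ref{lem:R1_tf}, and your decomposition into an oscillation piece controlled by \eqref{eq:weakoscillation_tf} (with $p=2$, $q=1$) and a smoothness piece controlled by \eqref{eq:weakoscillation2_tf} (with $p=1$), with the Taylor point $\tilde x_k^\pm$ chosen in the one-sided adjacent cell, is precisely the needed adaptation. One small inaccuracy in the justification of the smoothness piece: $\tilde x_k^+\in[x_k^n,x_{k+1}^n]$ does \emph{not} necessarily lie in $[x_\kmh^n,x_\kph^n]$ (it may exceed the midpoint $x_\kph^n$); the correct and sufficient containment is that both $\tilde x_k^+$ and $\theX_\Delta^n(\xi)$ for $\xi\in[\xi_\kmh,\xi_\kph]$ lie in the double cell $[x_{k-1}^n,x_{k+1}^n]$, which yields exactly the bound $\rkap\, z_k^n(x_{k+1}^n-x_{k-1}^n)$ you state --- and this is what your closing remark about the ``enclosing double-cell'' already acknowledges, so the slip does not affect the estimate.
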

\begin{lem}\label{lem:R4_tf}
  There is a constant $C_4>0$ expressible in $\Omega$, $T$, $\rkap$ and $\olDi$
	such that
  \begin{align*}
    \Riv&:=\tau\sum_{n=1}^{\Nt}\bigg|\Aiv - \intM \hatz_\Delta^n(\xi)\partial_\xi\hatz^n_\Delta(\xi)^2\rdd\circ\theX^n_\Delta(\xi)\dd\xi\bigg|
    \le C_4\delta^{1/4}.
  \end{align*}
\end{lem}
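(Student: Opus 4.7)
I would follow the same template as the proof of Lemma \ref{lem:R1_tf}. Fix $n$ and introduce the intermediate bridge quantity
\begin{align*}
  B_4^n := \delta \sum_{k\in\ivalp} z_k^n \left(\frac{z^n_\kph - z^n_\kmh}{\delta}\right)^2 \, \frac{1}{\delta}\int_{\xi_\kmh}^{\xi_\kph} \rdd\circ\theX_\Delta^n(\xi)\,\dd\xi,
\end{align*}
so that $R_4\le\tau\sum_n|A_4^n-B_4^n|+\tau\sum_n\big|B_4^n-\int_0^M\hatz_\Delta^n\partial_\xi(\hatz_\Delta^n)^2\rdd\circ\theX_\Delta^n\,\dd\xi\big|$. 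The second sum is easy: writing the target integral over each subinterval $[\xi_\kmh,\xi_\kph]$ as a sum of terms of the form $\big(\frac{z_\kph-z_\kmh}{\delta}\big)^2\int_{\xi_\kmh}^{\xi_\kph}\hatz_\Delta^n\,\rdd\circ\theX_\Delta^n\,\dd\xi$ and using that $|\hatz_\Delta^n(\xi)-z_k^n|\le|z_\kph^n-z_\kmh^n|\le 2z_k^n|z_\kph^n/z_\kmh^n-1|$ on $[\xi_\kmh,\xi_\kph]$, the difference is bounded (after the time sum) by \eqref{eq:weakoscillation_tf} with $p=2$, $q=1$, yielding $O(\delta^{1/4})$.

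The heart of the matter is $|A_4^n-B_4^n|$, where the prefactor $\frac{(z_\kph)^3+(z_\kmh)^3}{2z_\kph z_\kmh}$ in $A_4^n$ must be matched with the simpler factor $z_k^n=(z_\kph^n+z_\kmh^n)/2$ appearing in $B_4^n$. The key algebraic identity is
\begin{align*}
  \frac{(z_\kph)^3+(z_\kmh)^3}{2z_\kph z_\kmh} \;=\; \frac{z_\kph+z_\kmh}{2}\cdot\frac{z_\kph^2-z_\kph z_\kmh+z_\kmh^2}{z_\kph z_\kmh} \;=\; z_k + z_k\,\frac{(z_\kph-z_\kmh)^2}{z_\kph z_\kmh},
\end{align*}
which splits $A_4^n = \widetilde A_4^n + E^n$, where $\widetilde A_4^n$ has the "right" coefficient $z_k^n\rdd(x_k^n)$ and $E^n$ is a pure remainder. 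The difference $\widetilde A_4^n-B_4^n$ arises only from replacing $\rdd(x_k^n)$ by its integral average over $[\xi_\kmh,\xi_\kph]$, so by a one-term Taylor expansion it is controlled by $\rkap(x^n_{k+1}-x^n_{k-1})$ times $\delta z_k^n\big(\frac{z_\kph^n-z_\kmh^n}{\delta}\big)^2$, which after the time sum is $O(\delta^{1/2})$ by \eqref{eq:weakoscillation2_tf} with $p=1$. The remainder $E^n$, using $\frac{(z_\kph-z_\kmh)^2}{z_\kph z_\kmh}=\big(\frac{z_\kph}{z_\kmh}-1\big)+\big(\frac{z_\kmh}{z_\kph}-1\big)$, fits exactly into \eqref{eq:weakoscillation_tf} with $p=2$, $q=1$ (the borderline case $p+q=3$), yielding $O(\delta^{1/4})$.

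The only obstacle worth mentioning is locating the right algebraic split: the appearance of $z_\kph z_\kmh$ in the denominator makes $A_4^n$'s prefactor look incompatible with the oscillation estimates at first glance, since those are tailored to products of $z_k$ with ratios of neighboring $z$-values. The identity above resolves this cleanly, converting the "bad" denominator into precisely the ratio $z_\kph/z_\kmh-1$ (and its reciprocal) that \eqref{eq:weakoscillation_tf} is designed to handle, at the critical exponent $p+q=3$ which uses the full strength of the entropy dissipation via \eqref{eq:dissipation_tf} and Lemma \ref{lem:L4D2_tf}. Combining the two estimates by the triangle inequality gives the claimed bound $R_4\le C_4\delta^{1/4}$.
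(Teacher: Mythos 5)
Your proposal is correct and follows essentially the same route as the paper: introduce the bridge quantity $B_4^n$, control $|B_4^n - \int \ldots|$ via \eqref{eq:weakoscillation_tf} with $(p,q)=(2,1)$, split $\Aiv$ using the identity $\frac{(z_\kph)^3+(z_\kmh)^3}{2z_\kph z_\kmh} = z_k + z_k\big(\frac{z_\kph}{z_\kmh}-1\big) + z_k\big(\frac{z_\kmh}{z_\kph}-1\big)$ (which the paper states in one step where you derive it in two), and handle the Taylor-error term via \eqref{eq:weakoscillation2_tf} and the oscillation remainder via \eqref{eq:weakoscillation_tf}. The only cosmetic difference is the intermediate form $z_k + z_k\frac{(z_\kph-z_\kmh)^2}{z_\kph z_\kmh}$ and the explicit labelling of $\widetilde A_4^n$ and $E^n$; otherwise the decomposition, the lemmas invoked, and the resulting $\mathcal{O}(\delta^{1/4})$ bound all coincide with the paper's argument.
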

\begin{proof}
The proof is almost identical to the one for Lemma \ref{lem:R1_tf} above.
As before, we introduce the term
\begin{align*}
	B_4^n := \delta\sum_{k\in\ivalp}z_k^n\left(\frac{z^n_\kph-z^n_\kmh}{\delta}\right)^2\frac{1}{\delta}\int_{\xi_\kmh}^{\xi_\kph}\rdd\circ\theX^n_\Delta(\xi)\dd\xi
\end{align*}
and get due to \eqref{eq:weakoscillation_tf}, analogously to \eqref{eq:R1_step1_tf}, that
\begin{align}\label{eq:R4_step1_tf}
	\tau\sum_{n=1}^{\Nt}\bigg|B_4^n - \intM \hatz_\Delta^n(\xi)\partial_\xi\hatz^n_\Delta(\xi)^2\rdd\circ\theX^n_\Delta(\xi)\dd\xi\bigg|
	\le C\delta^{1/4}.
\end{align}
By writing
\begin{align*}
	\frac{(z^n_\kph)^3+(z^n_\kmh)^3}{2 z^n_\kph z^n_\kmh} 
	= z_k^n\Big(\frac{z^n_\kmh}{z^n_\kph}-1\Big) + z_k^n\Big(\frac{z^n_\kph}{z^n_\kmh}-1\Big) + z_k^n,
\end{align*}
one obtains that
\begin{align*}
	&\frac{(z^n_\kph)^3+(z^n_\kmh)^3}{2 z^n_\kph z^n_\kmh} \rdd(x_k^n) - \frac{1}{\delta}\int_{\xi_\kmh}^{\xi_\kph}z_k\rdd\circ\theX^n_\Delta(\xi)\dd\xi \\
	=& z_k^n\Big(\frac{z^n_\kmh}{z^n_\kph}-1\Big) + z_k^n\Big(\frac{z^n_\kph}{z^n_\kmh}-1\Big) 
		-\frac{1}{\delta}\int_{\xi_\kmh}^{\xi_\kph}z_k^n\big[\rdd\circ\theX^n_\Delta(\xi) - \rdd(x_k^n)\big]\dd\xi.
\end{align*}
Observing --- in analogy to \eqref{eq:rhotox_tf} --- that
\begin{align*}
	\frac1\delta\int_{\xi_\kmh}^{\xi_\kph}z_k^n\big|\rdd\circ\theX_\Delta^n(\xi)-\rdd(x_k^n)\big|\dd\xi
	\le \rkap z_k^n (x_\kph^n-x_\kmh^n),
\end{align*}
we obtain the same bound on $|\Aiv-B_4^n|$ as before on $|\Ai-B_1^n|$, i.e. 
\begin{align*}
	\left|\Aiv - B_4^n\right| 
	&\leq \rkap\sum_{k\in\ivalp}
				z_k^n\left(\frac{z^n_\kph-z^n_\kmh}{\delta}\right)^2
				\left[\left(\frac{z_\kph^n}{z_\kmh^n}-1\right) + \left(\frac{z_\kmh^n}{z_\kph^n}-1\right)\right] \\
	&\quad+ \rkap\sum_{k\in\ivalp}
				z_k^n\left(\frac{z^n_\kph-z^n_\kmh}{\delta}\right)^2(x_{k+1}^n-x_{k-1}^n).
\end{align*}
Again, applying \eqref{eq:weakoscillation_tf} and \eqref{eq:weakoscillation2_tf}, we get
\begin{align}\label{eq:R4_step3_tf}
	\tau\sum_{n=1}^{\Nt}\left|\Aiv - B_4^n\right|  \leq C\delta^{1/4},
\end{align}
and the estimates \eqref{eq:R4_step1_tf} and \eqref{eq:R4_step3_tf} imply the desired bound on $\Riv$.
\end{proof}

\begin{lem}\label{lem:R5}
  There is a constant $C_5>0$ expressible in $\Omega$, $T$, $\rkap$ and $\olDi$
  such that 
  \begin{align*}
    \Rv:=\tau\sum_{n=1}^{\Nt}\left|\Av - \frac{1}{2}\intM \hatz_\Delta^n(\xi)\partial_\xi\hatz^n_\Delta(\xi)\rddd\circ\theX^n_\Delta(\xi)\dd\xi\right|
    \le C_5\delta^{1/4}.
  \end{align*}
\end{lem}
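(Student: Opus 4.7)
My plan is to follow the template established in Lemmas~\ref{lem:R1_tf} and~\ref{lem:R4_tf}. I will introduce the intermediate quantity
\begin{align*}
  B_5^n := \frac{\delta}{2}\sum_{k\in\ivalp}\left(\frac{z_\kph^n - z_\kmh^n}{\delta}\right) z_k^n \cdot \frac{1}{\delta}\int_{\xi_\kmh}^{\xi_\kph}\rddd\circ\theX_\Delta^n(\xi)\,\dd\xi,
\end{align*}
and bound the two pieces $|\Av - B_5^n|$ and $\big|B_5^n - \tfrac12\int_0^M \hat z_\Delta^n \partial_\xi\hat z_\Delta^n\,\rddd\circ\theX_\Delta^n\,\dd\xi\big|$ separately, concluding through a triangle inequality.

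The second piece (target comparison) will be the routine step. Using the piecewise affine structure of $\hat z_\Delta^n$ together with the elementary bound $|\hat z_\Delta^n(\xi) - z_k^n|\le\tfrac12|z_\kph^n-z_\kmh^n|\le z_k^n|z_\kmh^n/z_\kph^n-1|$ on $[\xi_\kmh,\xi_\kph]$, the difference is majorized by an expression exactly of the form controlled by \eqref{eq:weakoscillation_tf} with $(p,q)=(1,1)$, producing an $\mathcal O(\delta^{1/4})$ contribution after the time summation, in full analogy with the derivation of \eqref{eq:R1_step1_tf}.

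The main work lies in bounding $|\Av - B_5^n|$. First, Taylor's theorem in Lagrange form furnishes some $\tilde x_k^+\in[x_k^n,x_{k+1}^n]$ with $\rd(x_{k+1}^n)-\rd(x_k^n)-(x_{k+1}^n-x_k^n)\rdd(x_k^n) = \tfrac12(x_{k+1}^n-x_k^n)^2\rddd(\tilde x_k^+)$; substituting $x_{k+1}^n-x_k^n=\delta/z_\kph^n$ recasts the bracketed third-order difference in $\Av$ as $\rddd(\tilde x_k^+)/(2(z_\kph^n)^2)$. Combined with the purely algebraic identity
\begin{align*}
  \frac{a^3+b^3}{2a^2} = \frac{(a+b)(a^2-ab+b^2)}{2a^2} = z_k^n\bigl(1 - r_k + r_k^2\bigr),\qquad r_k := z_\kmh^n/z_\kph^n,
\end{align*}
this yields the decomposition $\Av = \tfrac{\delta}{2}\sum_k((z_\kph^n-z_\kmh^n)/\delta)\,z_k^n\,\rddd(\tilde x_k^+) + F_5^n$, where $F_5^n := \tfrac{\delta}{2}\sum_k((z_\kph^n-z_\kmh^n)/\delta)\,z_k^n\,r_k(r_k-1)\,\rddd(\tilde x_k^+)$. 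The first summand differs from $B_5^n$ only through the evaluation point of $\rddd$; since $|\tilde x_k^+-\theX_\Delta^n(\xi)|\le x_{k+1}^n-x_{k-1}^n$ for $\xi\in[\xi_\kmh,\xi_\kph]$, this discrepancy is bounded by $\tfrac{\rkap\delta}{2}\sum_k z_k^n|(z_\kph^n-z_\kmh^n)/\delta|(x_{k+1}^n-x_{k-1}^n)$, which a Cauchy--Schwarz argument against $\delta\sum z_k^n((z_\kph^n-z_\kmh^n)/\delta)^2\le 2\Diz(\xvec_\Delta^0)$ and $\delta\sum z_k^n(x_{k+1}^n-x_{k-1}^n)^2\le C$ estimates by $\mathcal O(\delta^{1/2})$ after summing in $n$.

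The principal obstacle is the control of $F_5^n$, whose summands carry the factor $r_k(r_k-1)$ with the ratio $r_k$ \emph{not} a priori uniformly bounded in $\Delta$. I will exploit the elementary identity
\begin{align*}
  r(r-1) = r^2 - r = (r^2-1) - (r-1),\qquad\text{so}\qquad |r(r-1)|\le|r^2-1|+|r-1|,
\end{align*}
which majorizes $|F_5^n|$ by
\begin{align*}
  \tfrac{\rkap\delta}{2}\sum_{k\in\ivalp} z_k^n\left|\frac{z_\kph^n-z_\kmh^n}{\delta}\right|\left(\left|\frac{(z_\kmh^n)^2}{(z_\kph^n)^2}-1\right| + \left|\frac{z_\kmh^n}{z_\kph^n}-1\right|\right).
\end{align*}
Both pieces fall precisely within the scope of \eqref{eq:weakoscillation_tf} with parameters $(p,q)=(1,2)$ and $(p,q)=(1,1)$, and together they deliver an $\mathcal O(\delta^{1/4})$ bound after the time summation. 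Assembling the three error estimates through the triangle inequality then yields $R_5\le C_5\delta^{1/4}$, as claimed.
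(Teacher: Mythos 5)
Your proof is correct and follows essentially the same strategy as the paper: Taylor expansion in Lagrange form to replace the third-order difference quotient by $\rddd(\tilde x_k^+)/(2(z_\kph^n)^2)$, an algebraic identity to split off the ``good'' matching term, and then the oscillation estimate \eqref{eq:weakoscillation_tf} plus a distance estimate to absorb the remainder. The only (cosmetic) difference is that you weight $B_5^n$ by $z_k^n$ while the paper uses $z_\kph^n$, which leads to a slightly different but equivalent algebraic split $(1-r_k+r_k^2)$ versus the paper's $\tfrac{z_\kph}{2}+\tfrac{z_\kmh}{4}(r_k^2-1)+\tfrac{z_\kph}{4}(r_k-1)$; your treatment of the distance term by an explicit Cauchy--Schwarz against $\Diz$ and $\delta\sum z_k(x_{k+1}-x_{k-1})^2$ is also a clean rendering of what the paper invokes via \eqref{eq:weakoscillation2_tf}.
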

\begin{proof}
The idea of the proof is the same as in the previous proofs.
Let us define similar to $B_1^n$ the term
\begin{align*}
	B_5^n := \delta\sum_{k\in\ivalp}z_\kph^n\left(\frac{z^n_\kph-z^n_\kmh}{\delta}\right)\frac{1}{2\delta}\int_{\xi_\kmh}^{\xi_\kph}\rddd\circ\theX^n_\Delta(\xi)\dd\xi.
\end{align*}
Note in particular that we weight the integral here with $z_\kph^n$. 
Then
\begin{align*}\begin{split} 
	\left|\frac{1}{2}\intM \hatz_\Delta^n(\xi)\partial_\xi\hatz^n_\Delta(\xi)\rddd\circ\theX^n_\Delta(\xi)\dd\xi
	      - B_5^n \right|
	\leq \frac{1}{2}\rkap\sum_{k\in\ivalp} \left(\frac{z^n_\kph-z^n_\kmh}{\delta}\right) \int_{\xi_\kmh}^{\xi_\kph} \big|\hatz_\Delta^n(\xi)-z_\kph^n\big| \dd\xi  \\
	\leq \frac{1}{2}\rkap\delta\sum_{k\in\ivalp} z_\kph^n\left|\frac{z^n_\kph-z^n_\kmh}{\delta}\right| \left|\frac{z_\kmh^n}{z_\kph^n}-1\right|,
\end{split}\end{align*}
where we used that by definition of $\hatz_\Delta^n$,
\begin{align*}
	\intM \hatz^n_\Delta(\xi)\partial_\xi\hatz^n_\Delta(\xi)\rddd\circ\theX^n_\Delta(\xi)\dd\xi
	= \sum_{k\in\ivalp} \left(\frac{z^n_\kph-z^n_\kmh}{\delta}\right) \int_{\xi_\kmh}^{\xi_\kph}\hatz^n_\Delta(\xi) \rddd\circ\theX^n_\Delta(\xi)\dd\xi.
\end{align*}
This especially implies, due to \eqref{eq:weakoscillation_tf} that
\begin{align}\label{eq:R5_step1_tf}
	\tau\sum_{n=1}^{\Nt}\bigg|B_5^n - \frac{1}{2}\intM \hatz_\Delta^n(\xi)\partial_\xi\hatz^n_\Delta(\xi)\rddd\circ\theX^n_\Delta(\xi)\dd\xi\bigg|
	\le C\delta^{1/4}.
\end{align}
Furthermore, one can introduce intermediate values $\tilde x_k^+$ such that
\begin{align*}
	\rd(x^n_{k+1})-\rd(x^n_k)-(x^n_{k+1}-x^n_k)\rdd(x^n_k)
	=\frac12(x^n_{k+1}-x^n_k)^2\rddd(\tilde x_k^+)
	=\frac{\delta^2}{2(z^n_\kph)^2}\rddd(\tilde x_k^+).
\end{align*}
Using the identity
\begin{align*}
	\left(\frac{(z_\kph^n)^3+(z_\kmh^n)^3}2\right)\frac{1}{2(z^n_\kph)^2}
	= \frac{z_\kph^n}{2} + \frac{z_\kmh^n}{4}\left(\frac{(z_\kmh^n)^2}{(z_\kph^n)^2}-1\right) + \frac{z_\kph^n}{4}\left(\frac{z_\kmh^n}{z_\kph^n}-1\right),
\end{align*}
we thus have --- using again \eqref{eq:dummy808_tf} --- that
\begin{align*}
	&\left(\frac{(z^n_\kph)^3+(z^n_\kmh)^3}2\right)
	\left(\frac{\rd(x^n_{k+1})-\rd(x^n_k)-(x^n_{k+1}-x^n_k)\rdd(x^n_k)}{\delta^2}\right)-\frac1{2\delta}\int_{\xi_\kmh}^{\xi_\kph}z_k^n\rddd\circ\theX^n_\Delta\dd\xi \\
	=& \frac{z_\kmh^n}{4}\left(\frac{(z_\kmh^n)^2}{(z_\kph^n)^2}-1\right) + \frac{z_\kph^n}{4}\left(\frac{z_\kmh^n}{z_\kph^n}-1\right)
	- \frac1{2\delta}\int_{\xi_\kmh}^{\xi_\kph}z_\kph^n\big[\rddd\circ\theX^n_\Delta-\rddd(\tilde x_k^+)\big]\dd\xi.
\end{align*}
Observing --- in analogy to \eqref{eq:rhotox_tf} --- that
\begin{align*}
	\frac1{2\delta}\int_{\xi_\kmh}^{\xi_\kph}z_k^n\big|\rdd\circ\theX_\Delta^n(\xi)-\rdd(x_k^n)\big|\dd\xi
	\le \frac{\rkap}{2} z_\kph^n (x_\kph^n-x_\kmh^n),
\end{align*}
and $z_\kph^n\leq 2z_k^n$, we obtain the following bound on $|\Av-B_5^n|$:
\begin{align*}
	\left|\Av - B_5^n\right| 
	&\leq \frac{\rkap}{4}\sum_{k\in\ivalp}
				z_k^n\left(\frac{z^n_\kph-z^n_\kmh}{\delta}\right)
				\left[\left(\frac{(z_\kmh^n)^2}{(z_\kph^n)^2}-1\right) + \left(\frac{z_\kmh^n}{z_\kph^n}-1\right)\right] \\
	&\quad+ \rkap\sum_{k\in\ivalp}
				z_k^n\left(\frac{z^n_\kph-z^n_\kmh}{\delta}\right)(x_{k+1}^n-x_{k-1}^n).
\end{align*}
Again, applying \eqref{eq:weakoscillation_tf} and \eqref{eq:weakoscillation2_tf}, we get
\begin{align}\label{eq:R5_step3_tf}
	\tau\sum_{n=1}^{\Nt}\left|\Av - B_5^n\right|  \leq C\delta^{1/4},
\end{align}
and the estimates \eqref{eq:R5_step1_tf} and \eqref{eq:R5_step3_tf} imply the desired bound on $\Rv$.
\end{proof}
Arguing like in the previous proof, one shows the analogous estimate for $\Avi$ in place of $\Av$.
It remains to analyze the potential term $\Avii$,
where we instantaneously identify the $\xi$-integral with the $x$-integral:
\begin{lem}\label{lem:R7_tf}
  There is a constant $C_7>0$ expressible in $\Omega$, $T$ and $\rkap$
  such that 
  \begin{align*}
    \Rvii:=\tau\sum_{n=1}^{\Nt}\left|\Avii - \intom V_x(x) u_\Delta^n(x) \dd x\right|
    \le C_7\delta.
  \end{align*}
\end{lem}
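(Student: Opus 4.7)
The plan is to interpret $\Avii$ as a Riemann sum approximation (in the Lagrangian variable $\xi$) of the quantity $\intom V_x(x)u_\Delta^n(x)\rd(x)\dd x$, and to bound the quadrature error pointwise in $n$ by $O(\delta)$. Summation over $n$ then produces a bound of order $\tau\Nt\cdot\delta = O(\delta)$. Implicitly I am reading $\Avii = \delta\sum_{k\in\ivalp} V_x(x_k^n)\rd(x_k^n)$, i.e.\ the form dictated by $-\wspr{\wgrad\potz(\xvec^n_\Delta)}{\rvec(\xvec_\Delta^n)}$.

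To make this concrete, first I would use that $u_\Delta^n\dd x$ pushes forward to $\dd\xi$ under the inverse of the Lagrangian map $\theX_\Delta^n$, which gives the identity
\[
\intom V_x(x)u_\Delta^n(x)\rd(x)\dd x = \intM f\big(\theX_\Delta^n(\xi)\big)\dd\xi, \qquad f(x):=V_x(x)\rd(x).
\]
Under the assumption \eqref{eq:Vhypo} and the bound $\|\rho\|_{C^4}\le\rkap$ from \eqref{eq:smoothbound_tf}, $f$ is Lipschitz on $\Omega$ with constant $L$ depending only on $\Omega$, $\Lambda$ and $\rkap$, and uniformly bounded. On the other hand, each node evaluation $f(x_k^n)$ can be written as a constant integrand on the cell $[\xi_\kmh,\xi_\kph]$ of $\xi$-length $\delta$, so
\[
\Avii = \sum_{k\in\ivalp}\int_{\xi_\kmh}^{\xi_\kph} f(x_k^n)\,\dd\xi.
\]
Subtracting, I split $\intM\dd\xi$ into the two boundary strips $[0,\xi_\imh]$ and $[\xi_{\Kmh},M]$ plus the union of the $K-1$ middle cells. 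The boundary strips contribute at most $\delta\|f\|_\infty$, hence $O(\delta)$.

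For each middle cell $[\xi_\kmh,\xi_\kph]$ the Lagrangian map $\theX_\Delta^n$ takes values in $[x_{k-1}^n,x_{k+1}^n]$, so $|\theX_\Delta^n(\xi)-x_k^n|\le x_{k+1}^n-x_{k-1}^n$ there, and Lipschitz continuity of $f$ yields
\[
\int_{\xi_\kmh}^{\xi_\kph}\big|f(\theX_\Delta^n(\xi))-f(x_k^n)\big|\dd\xi \le L\,\delta\,(x_{k+1}^n-x_{k-1}^n).
\]
Summing over $k\in\ivalp$ and using the telescoping estimate $\sum_{k\in\ivalp}(x_{k+1}^n-x_{k-1}^n)\le 2(b-a)$ bounds the middle contribution by $2L(b-a)\delta$. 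Combining both pieces gives $\big|\Avii-\intom V_x(x)u_\Delta^n(x)\rd(x)\dd x\big|\le C\delta$ with $C$ depending only on $\Omega$, $\Lambda$ and $\rkap$. Multiplying by $\tau$ and summing over $n=1,\ldots,\Nt$ with $\tau\Nt\le T+1$ then yields $\Rvii\le C_7\delta$ for some $C_7=C_7(\Omega,T,\rkap)$.

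There is no serious obstacle here: this is the tamest of the seven error estimates because $\Avii$ depends only on node positions $x_k^n$ and not on discrete differences of $\zvec^n$, so the oscillation estimates \eqref{eq:weakoscillation_tf} and \eqref{eq:weakoscillation2_tf} used for $\Ri,\ldots,\Rvi$ play no role. The only minor subtlety is the choice to align Riemann cells with the half-integer indices $\xi_\kmh,\xi_\kph$ so that each node $x_k^n$ sits at the center of a cell; this is what makes the Lipschitz error proportional to the neighbor distance $x_{k+1}^n-x_{k-1}^n$ and produces a telescoping bound independent of the mesh size in $x$.
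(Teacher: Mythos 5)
Your proof is correct and follows essentially the same route as the paper: push forward the $x$-integral to the Lagrangian variable $\xi$, view $\Avii$ as the associated midpoint Riemann sum with cells $[\xi_\kmh,\xi_\kph]$, bound each cell's quadrature error via Lipschitz continuity of $f=V_x\rd$ and the containment $\theX_\Delta^n(\xi)\in[x_\kmh^n,x_\kph^n]$, then telescope and use $\tau\Nt\le T+1$. You are also right that $\Avii$ should read $\delta\sum_k V_x(x_k^n)\rd(x_k^n)$ and the integral in the lemma statement should carry an extra factor of $\rd(x)$ to match the proof and the term $\intom V_x u\,\rd\dd x$ in $N(u,\rho)$; the paper's displayed formulas contain typos on both counts.
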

\begin{proof}
Since the product $V_x\rho_x$ is a smooth function on the domain $\Omega$, 
we can invoke the mean-value theorem and find intermediate values $\tilde{x}_k$, such that
\begin{align*}
	\left|\delta\sum_{k\in\ivalp} V_x(x_k^n)\rho_x(x_k^n) - \intM V_x(\cX_\Delta^n(\xi))\rho_x(\cX_\Delta^n(\xi))\dd\xi\right| 
	&\leq \delta\sum_{k\in\ivalp} \partial_x\big(V_x\rho_x\big)(\tilde{x}_k)(x_\kappp^n-x_\kappm^n) \\
	&\leq \delta(b-a)\sup_{x\in\Omega}\big|V_x(x)\rho_x(x)\big|.
\end{align*}
The claim then follows by a change of variables.
\end{proof}
It remains to identify the integral expressions inside $\Ri$ to $\Rv$ with those in the weak formulation \eqref{eq:weakgoal_tf}.
\begin{lem}
  One has that
  \begin{align}
    \label{eq:cov1_tf}
    &\intM \hatz_\Delta^n(\xi)\partial_\xi\hatz_\Delta^n(\xi)\rddd\circ\theX^n_\Delta(\xi)\dd\xi 
    = \frac{1}{2}\intom \partial_x\big(\hatu_\Delta^n(x)\big)^2 \rddd(x)\dd x, \\
    \label{eq:cov2_tf}
    &\Rviii:=\tau\sum_{n=1}^{\Nt}\left|\intM \hatz_\Delta^n(\xi)(\partial_\xi\hatz_\Delta^n)^2(\xi)\rdd\circ\theX^n_\Delta(\xi)\dd\xi 
      - \intom (\partial_x\hatu_\Delta^n)^2(x)\rdd(x)\dd x\right|
    \le C_8\delta^{1/4}
 \end{align}
for a constant $C_8>0$ expressible in $\Omega$, $T$, $\rkap$ and $\olDi$.
\end{lem}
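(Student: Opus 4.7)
The plan is to establish both identities by the change of variables $x=\theX_\Delta^n(\xi)$, exploiting that the Lagrangian map $\theX_\Delta^n:\M\to\Omega$ is piecewise affine with derivative $\partial_\xi\theX_\Delta^n = 1/(u_\Delta^n\circ\theX_\Delta^n)$ on each mesh interval, together with the defining relation $\hatu_\Delta^n\circ\theX_\Delta^n=\hatz_\Delta^n$ from \eqref{eq:locaffine0}. Identity \eqref{eq:cov1_tf} will come out exactly, whereas \eqref{eq:cov2_tf} produces an error that must be controlled.

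For \eqref{eq:cov1_tf}, I would first rewrite the integrand as $\hatz_\Delta^n\,\partial_\xi\hatz_\Delta^n = \frac{1}{2}\partial_\xi(\hatz_\Delta^n)^2$ and use $(\hatz_\Delta^n)^2=(\hatu_\Delta^n)^2\circ\theX_\Delta^n$. The piecewise chain rule then gives $\partial_\xi(\hatz_\Delta^n)^2=\bigl(\partial_x(\hatu_\Delta^n)^2\bigr)\circ\theX_\Delta^n\cdot\partial_\xi\theX_\Delta^n$, and substituting $\dd x=\partial_\xi\theX_\Delta^n\,\dd\xi$ immediately yields the right-hand side of \eqref{eq:cov1_tf}.

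For \eqref{eq:cov2_tf}, the analogous chain rule produces $(\partial_\xi\hatz_\Delta^n)^2=\bigl((\partial_x\hatu_\Delta^n)^2\bigr)\circ\theX_\Delta^n\cdot(\partial_\xi\theX_\Delta^n)^2$. Changing variables and using one factor of $\partial_\xi\theX_\Delta^n=1/(u_\Delta^n\circ\theX_\Delta^n)$ to cancel the extra Jacobian leaves
\begin{align*}
  \intM \hatz_\Delta^n\,(\partial_\xi\hatz_\Delta^n)^2\,\rdd\circ\theX_\Delta^n\,\dd\xi
  = \intom \frac{\hatu_\Delta^n(x)}{u_\Delta^n(x)}\,(\partial_x\hatu_\Delta^n)^2(x)\,\rdd(x)\,\dd x.
\end{align*}
Proving \eqref{eq:cov2_tf} therefore reduces to bounding
\begin{align*}
  \mathcal{E}_n := \left|\intom \left(\frac{\hatu_\Delta^n}{u_\Delta^n}-1\right)(\partial_x\hatu_\Delta^n)^2\,\rdd\,\dd x\right|
\end{align*}
in the sense that $\tau\sum_{n=1}^{\Nt}\mathcal{E}_n\le C\delta^{1/4}$.

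The main obstacle, and the source of the $\delta^{1/4}$ rate, will be this last bound. I plan to split the integration domain into the half-cell pieces $(x_{k-1},x_{k-\frac12})$ and $(x_{k-\frac12},x_k)$. On each such piece $u_\Delta^n$ equals a single value $z_{k-\frac12}$, while $\hatu_\Delta^n$ is affine between $z_{k-\frac12}$ and a neighboring nodal value of the form $\frac{1}{2}(z_{k-\frac12}+z_{k\pm\frac12})$, producing the pointwise bound $|\hatu_\Delta^n/u_\Delta^n-1|\le\frac{1}{2}|z_{k\pm\frac12}/z_{k\mp\frac12}-1|$. Combined with the explicit values of $\partial_x\hatu_\Delta^n$ from \eqref{eq:tildeux} and the interval length $\delta/(2z_{k-\frac12})$, each contribution will be controlled by a term of the form
\begin{align*}
  \rkap\,\delta\,z_k\left(\frac{z_{k+\frac12}-z_{k-\frac12}}{\delta}\right)^{\!2}\left|\frac{z_{k\pm\frac12}}{z_{k\mp\frac12}}-1\right|,
\end{align*}
which after summation in $k$ and $n$ (weighted with $\tau$) fits exactly the oscillation estimate \eqref{eq:weakoscillation_tf} with $p=2$, $q=1$, giving the required bound $\tau\sum_n\mathcal{E}_n\le C_8\delta^{1/4}$.
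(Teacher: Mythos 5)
Your proposal is correct and follows essentially the same route as the paper: \eqref{eq:cov1_tf} is the same piecewise chain-rule plus change-of-variables argument, and for \eqref{eq:cov2_tf} you transfer the $\xi$-integral to an $x$-integral and compare the weights $\hatu_\Delta^n/u_\Delta^n$ versus $1$, whereas the paper transfers the $x$-integral to a $\xi$-integral and compares $\hatz_\Delta^n$ versus $1/\partial_\xi\cX_\Delta^n$ --- the identical comparison written in the other variable, since $\hatz_\Delta^n-1/\partial_\xi\cX_\Delta^n = (\hatu_\Delta^n-u_\Delta^n)\circ\cX_\Delta^n$. Both reduce the error to the oscillation sum $\delta\sum_k z_k^n\big(\tfrac{z_\kph^n-z_\kmh^n}{\delta}\big)^2\big|\tfrac{z_{k\pm 1/2}^n}{z_{k\mp 1/2}^n}-1\big|$ and invoke \eqref{eq:weakoscillation_tf} with $p=2$, $q=1$, so the $\delta^{1/4}$ rate comes out the same way.
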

\begin{proof}
  The starting point is the relation \eqref{eq:locaffine0} between the locally affine interpolants $\hatu_\Delta^n$ and $\hatz_\Delta^n$ that is 
  \begin{align}
    \label{eq:locaffine2_tf}
    \hatz_\Delta^n(\xi)=\hatu_\Delta^n\circ\cX_\Delta^n(\xi)     
  \end{align}
  for all $\xi\in\M$.
  Both sides of this equation are differentiable at almost every $\xi\in\M$, with
  \begin{align*}
    \partial_\xi\hatz_\Delta^n(\xi) =
    \partial_x\hatu_\Delta^n\circ\cX_\Delta^n(\xi)\partial_\xi\cX_\Delta^n(\xi).
  \end{align*}
  Substitute this expression for $\partial_\xi\hatz_\Delta^n(\xi)$ into the left-hand side of \eqref{eq:cov1_tf},
  and perform a change of variables $x=\cX_\Delta^n(\xi)$ to obtain the integral on the right.
  
Next observe that the $x$-integral in \eqref{eq:cov2_tf} can be written as
\begin{align}\label{eq:dummy201_tf}
	\intom (\partial_x\hatu_\Delta^n)^2(x)\rdd(x)\dd x
	= \intM (\partial_\xi\hatz_\Delta^n)^2(\xi)\frac{1}{\partial_\xi\cX_\Delta^n}\rdd\circ\cX_\Delta^n(\xi) \dd\xi,
\end{align}
using \eqref{eq:locaffine2_tf}.
It hence remains to estimate the difference between the $\xi$-integral in \eqref{eq:cov2_tf} and \eqref{eq:dummy201_tf}, respectively.
To this end, observe that for each $\xi\in(\xi_k,\xi_\kph)$ with some $k\in\ivalp$, 
one has $\partial_\xi\cX_\Delta^n(\xi)=1/z^n_\kph$ and $\hatz_\Delta(\xi)\in[z_\kmh,z_\kph]$.
Hence, for those $\xi$,
\begin{align*}
	\left|1-\frac1{\hatz_\Delta^n(\xi)\partial_\xi\cX_\Delta^n(\xi)}\right| \le \left|1-\frac{z^n_\kph}{z^n_\kmh}\right|.
\end{align*}
If instead $\xi\in(\xi_\kmh,\xi_k)$, then this estimate is satisfied with the roles of $z^n_\kph$ and $z^n_\kmh$ interchanged.
Consequently, 
\begin{align*}
	&\left|\intM (\partial_\xi\hatz_\Delta^n)^2(\xi) \hatz_\Delta^n(\xi)\rdd\circ\cX_\Delta^n(\xi) \dd\xi
	- \intM (\partial_\xi\hatz_\Delta^n)^2(\xi)\frac{1}{\partial_\xi\cX_\Delta^n(\xi)}\rdd\circ\cX_\Delta^n(\xi) \dd\xi\right| \\
	\leq& \rkap\intM (\partial_\xi\hatz_\Delta^n)^2(\xi) \hatz_\Delta^n(\xi)\left(1-\frac{1}{\hatz_\Delta^n(\xi)\partial_\xi\cX_\Delta^n(\xi)}\right) \dd \xi \\
	\leq& \rkap\delta\sum_{k\in\ivalp}z_k^n\left(\frac{z_\kph^n-z_\kmh^n}{\delta}\right)^2 
			\left(\left|\frac{z_\kph}{z_\kmh}-1\right| + \left|\frac{z_\kmh}{z_\kph}-1\right|\right),
\end{align*}
which is again at least of order $\mathcal{O}(\delta^{\frac{1}{4}})$, as we have seen before in \eqref{eq:weakoscillation_tf}.
\end{proof}

\begin{proof2}{\eqref{eq:weakform2_tf}}
  Combining the discrete weak formulation \eqref{eq:sevenAs_tf},
  the change of variables formulae \eqref{eq:cov1_tf} and \eqref{eq:cov2_tf},
  and the definitions of $\Ri$ to $\Rviii$,
  it follows that
  \begin{align*}
    \er_{2,\Delta}\le \rkap\Rviii + \rkap\tau\sum_{n=1}^{\Nt}\Big|
      \intM \hatz_\Delta^n(\xi)\partial_\xi\hatz_\Delta^n(\xi)\rddd\circ\theX^n_\Delta(\xi)\dd\xi 
						+ \frac{3}{2}\intM \hatz_\Delta^n(\xi)(\partial_\xi\hatz_\Delta^n)^2(\xi)\rdd\circ\theX^n_\Delta(\xi)\dd\xi \\
				+ \intom V_x(x) \ti{u_\Delta}(x) \rd(x) \dd x
      - \sum_{i=1}^7 \Ais^n\Big| \\
    \le \rkap\sum_{i=1}^7 \Ris^n \le \rkap\sum_{i=1}^7 C_i \delta^{1/4}.
  \end{align*}
  This implies the desired inequality \eqref{eq:weakform2_tf}.
\end{proof2}

We are now going to finish the proof of this section's main result, Proposition \ref{prp:weakgoal_tf}.

\begin{proof2}{Proposition \ref{prp:weakgoal_tf}}
Owing to \eqref{eq:weakform1_tf} and \eqref{eq:weakform2_tf}, we know that
\begin{align*}
	\Bigg|\int_0^T \ed(t)\intom \rho(x)\ti{\baru_\Delta}(t,x) \dd x 
	+\eta(t)\frac12\intom \rddd(x)\partial_x(\ti{u_\Delta}^2)(t,x) + 3\rdd(x) (\partial_x\ti{u_\Delta})^2(t,x) \dd x \\
			+ \intom V_x(x) \ti{u_\Delta}(t,x) \rd(x)\dd t\Bigg| \\
	\le \er_{1,\Delta} + \er_{2,\Delta} \le C\big(\tau + \delta^{1/4}\big).
\end{align*}
To obtain \eqref{eq:weakgoal_tf} in the limit $\Delta\to0$, 
we still need to show the convergence of the integrals to their respective limits,
but this is no challenging task anymore:
Note that \eqref{eq:strong_tf} implies
\begin{align}\label{eq:strong2_tf}
	\partial_x\ti{u_\Delta} \longrightarrow \partial_x u_* \quad \text{strongly in $L^2([0,T]\times\Omega)$},
\end{align}
hence $(\partial_x\ti{u_\Delta})^2$ converges to $(\partial_x u_*)^2$ in $L^1([0,T]\times\Omega)$.
Furthermore, we have that
\begin{align}\label{eq:strong3_tf}
	\partial_x(\ti{u_\Delta}^2) = 2 \ti{u_\Delta} \partial_x\ti{u_\Delta}
	\longrightarrow 2 u_* \partial_x(u_*) = \partial_x(u_*^2)
\end{align}
in $L^2([0,T]\times\Omega)$. Here we used \eqref{eq:strong2_tf} and that $\ti{u_\Delta}$ converges to $u_*$ uniformly on $[0,T]\times\Omega$
due to \eqref{eq:uniformLinfty_tf}.
Hence, \eqref{eq:strong2_tf} and \eqref{eq:strong3_tf} suffice to pass to the limit in the second integral.
Finally remember the weak convergence result in \eqref{eq:weak_tf},
$\ti{u_\Delta}\to u_*$ in $\dens$ with respect to time,
hence the convergence of the first and third integral is assured as well.

\end{proof2}

%
%
\section{Numerical results}\label{sec:num_tf}
%
%
\subsection{Non-uniform meshes}
An equidistant mass grid --- as used in the analysis above --- leads to a good spatial resolution of regions where the value of $u^0$ is large,
but provides a very poor resolution in regions where $u^0$ is small.
Since we are interested in regions of low density, and especially in the evolution of supports,  
it is natural to use a \emph{non-equidistant} mass grid with an adapted spatial resolution, like the one defined as follows:
The mass discretization of $\M$ is determined by a vector $\vech=(\xi_0,\xi_1,\xi_2,\ldots,\xi_{K-1},\xi_K)$,
with $0=\xi_0 < \xi_1 < \cdots < \xi_{K-1} < \xi_K = M$
and we introduce accordingly the distances (note the convention $\xi_{-1} = \xi_{K-1} = 0$)
\begin{align*}
  \delta_\kappa = \xi_\kappp-\xi_\kappm, 
  \quad\text{and}\quad 
  \delta_k = \frac12(\delta_\kph+\delta_\kmh) 
\end{align*}
for $\kappa\in\hval$ and $k\in\ivalp$, respectively.
The piecewise constant density function $u\in\densNN$ corresponding to a vector $\xvec\in\setR^{K-1}$
is now given by
\begin{align*}
  u(x) = z_\kappa \quad\text{for $x_\kappm<x<x_\kappp$}, \quad
  \text{with}\quad z_\kappa = \frac{\delta_\kappa}{x_\kappp-x_\kappm}.
\end{align*}
The Wasserstein-like metric (and its corresponding norm) needs to be adapted as well:
the scalar product $\wspr{\cdot}{\cdot}$ is replaced by
\begin{align*}
  \langle\vvec,\wvec\rangle_\vech = \sum_{k\in\ivalp} \delta_kv_kw_k
	\quad\textnormal{and}\quad \wnrm{\vvec} = \langle\vvec,\vvec\rangle_\vech.
\end{align*}
Hence the metric gradient $\nabla_\vech f(\xvec)\in\setR^{K-1}$ of a function $f:\xseqNN\to\setR$ at $\xvec\in\xseqNN$
is given by
\begin{align*}
  \big[\nabla_\vech f(\xvec)\big]_k = \frac1{\delta_k}\partial_{x_k}f(\xvec).
\end{align*}
Otherwise, we proceed as before:
the entropy is discretized by restriction, and the discretized information functional is the self-dissipation of the discretized entropy.
Explicitly, the resulting fully discrete gradient flow equation attains the form
\begin{align}\label{eq:nonuniform}
  \frac{\xvec_\Delta^n-\xvec_\Delta^{n-1}}\tau = - \nabla_\vech\DiVz(\xvec_\Delta^n)
\end{align}

%
\subsection{Implementation}
%
%
Starting from the initial condition $\xvec_\Delta^0$, the fully discrete solution is calculated inductively 
by solving the implicit Euler scheme \eqref{eq:nonuniform} for $\xvec_\Delta^n$, given $\xvec_\Delta^{n-1}$.
In each time step, a damped Newton iteration is performed, with the solution from the previous time step as initial guess.

%
\subsection{Numerical experiments}
In the following numerical experiments, we fix $\Omega=(0,1)$.

\subsubsection{Evolution of discrete solutions}
In a paper of Gruen and Beck \cite{BGruen}, the authors analyzed, among other things, the behaviour of equation \eqref{eq:thinfilm}
on the bounded domain $(0,1)$ with Neumann-boundary conditions and the initial datum
\begin{align}\label{eq:u0}
	u_\eps^0(x) = (x-0.5)^4 + \eps,\quad x\in(0,1),\quad\textnormal{with mass } M=0.0135,
\end{align}
with $\eps=10^{-3}$. 
This case is interesting insofar as the observed film seems to rip at time $t=0.012$. 
Figure \ref{fig:fig1} shows the evolution of $u_\Delta$ for $K=400$ and $\tau=10^{-7}$ at times $t=0,0.0022,0.012,0.04$, 
the associated particle flow is printed in figure \ref{fig:fig2}/left.

\begin{figure}[t]
  \centering
  \includegraphics[width=0.45\textwidth]{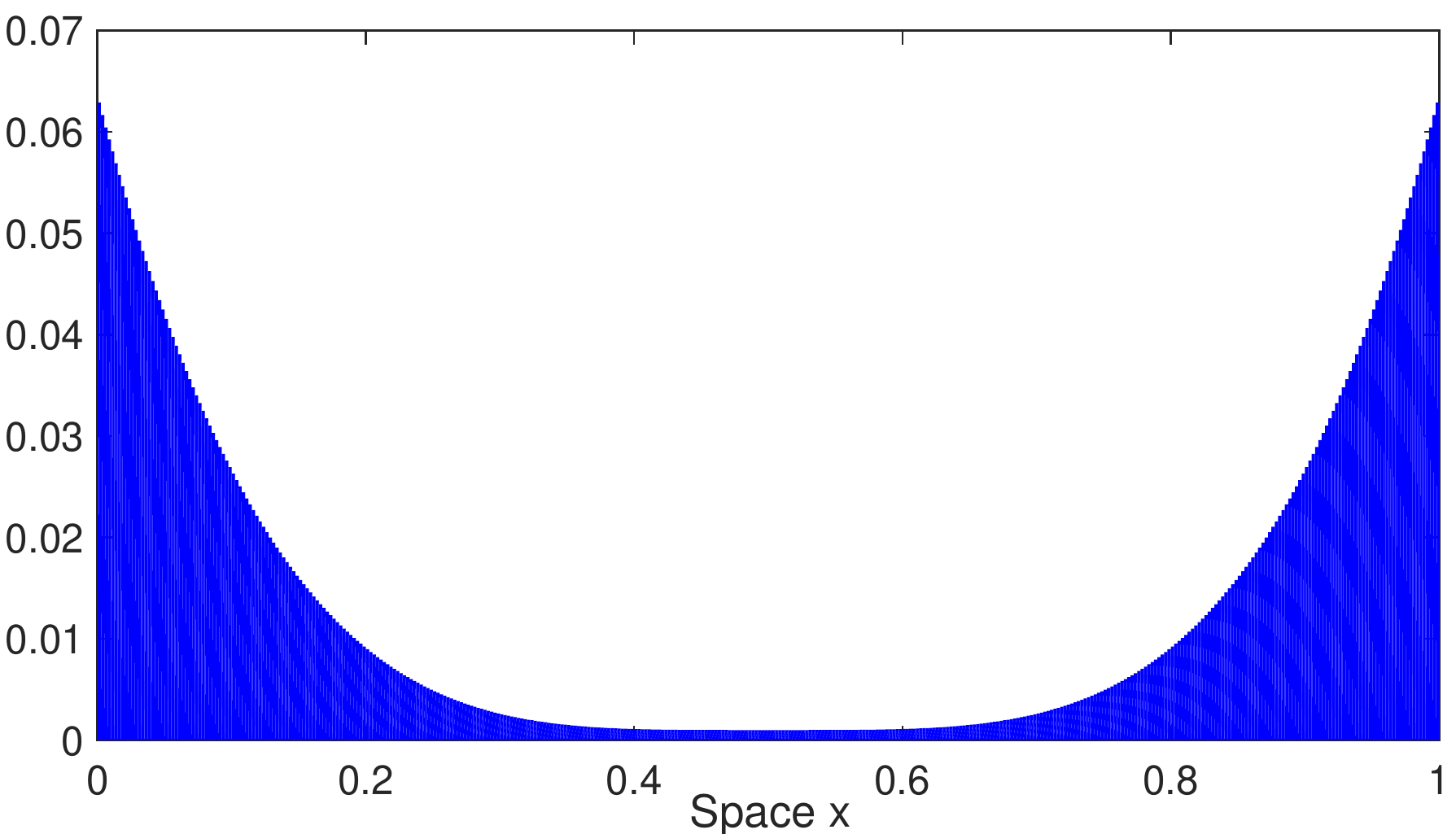}
  \hfill
  \includegraphics[width=0.45\textwidth]{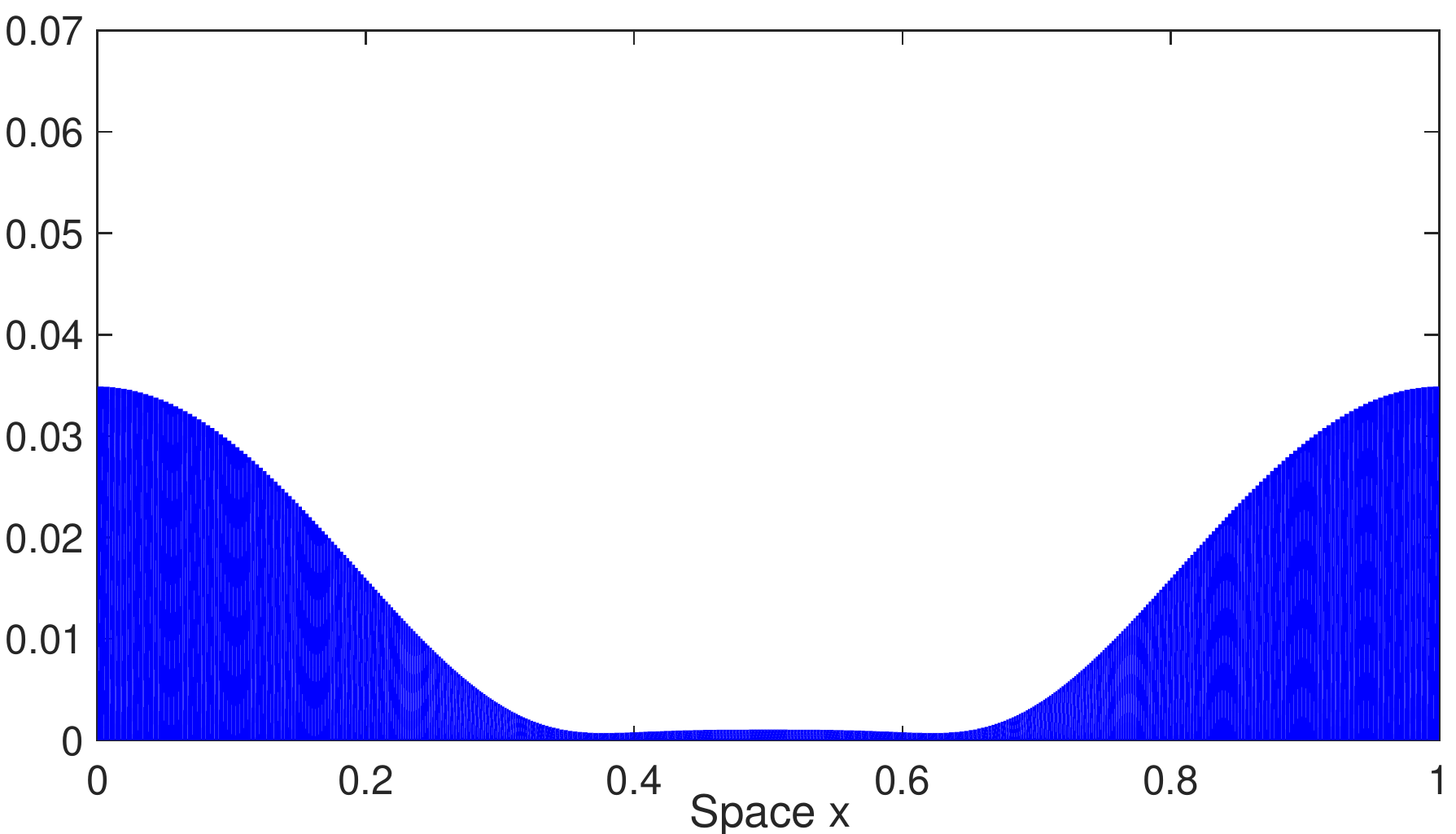}
  \newline
  \includegraphics[width=0.45\textwidth]{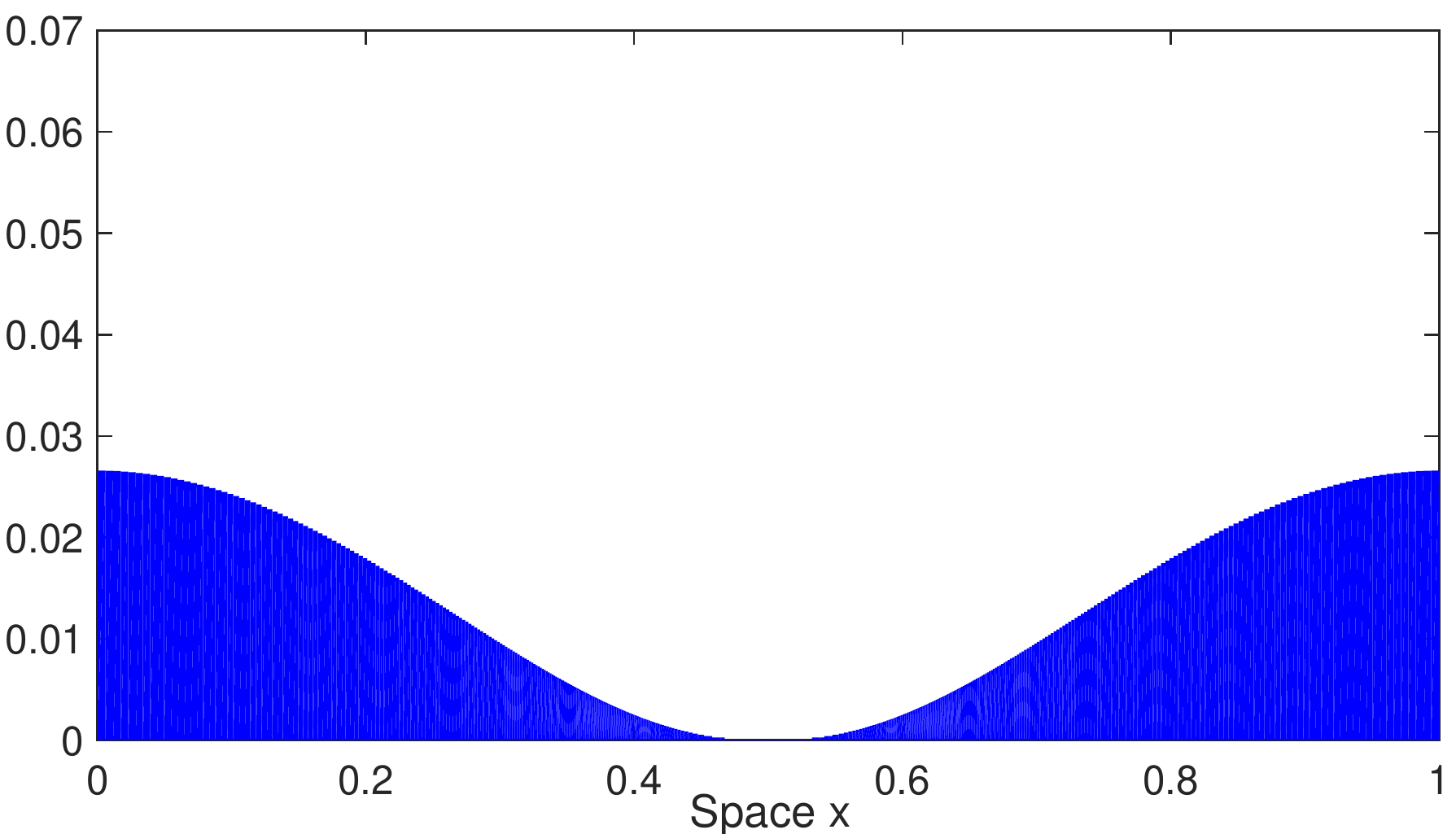}
  \hfill
  \includegraphics[width=0.45\textwidth]{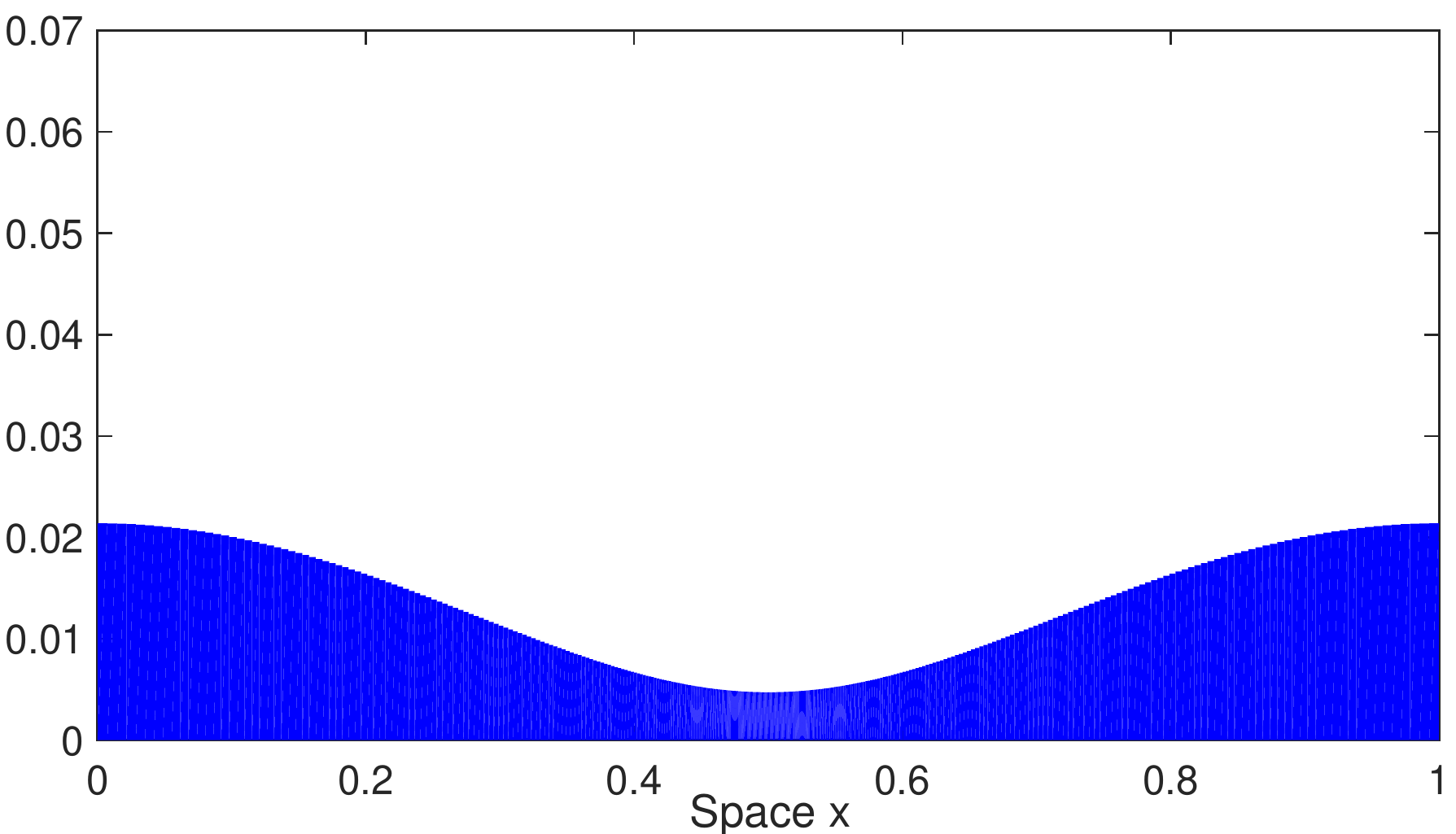}
  \caption{Evolution of a discrete solution $u_\Delta$, evaluated at different times $t = 0,0.002,0.012,0.04$ (from top left to bottom right)}
  \label{fig:fig1}
\end{figure}

\begin{figure}[t]
  \centering
  \subfigure{\includegraphics[width=0.455\textwidth]{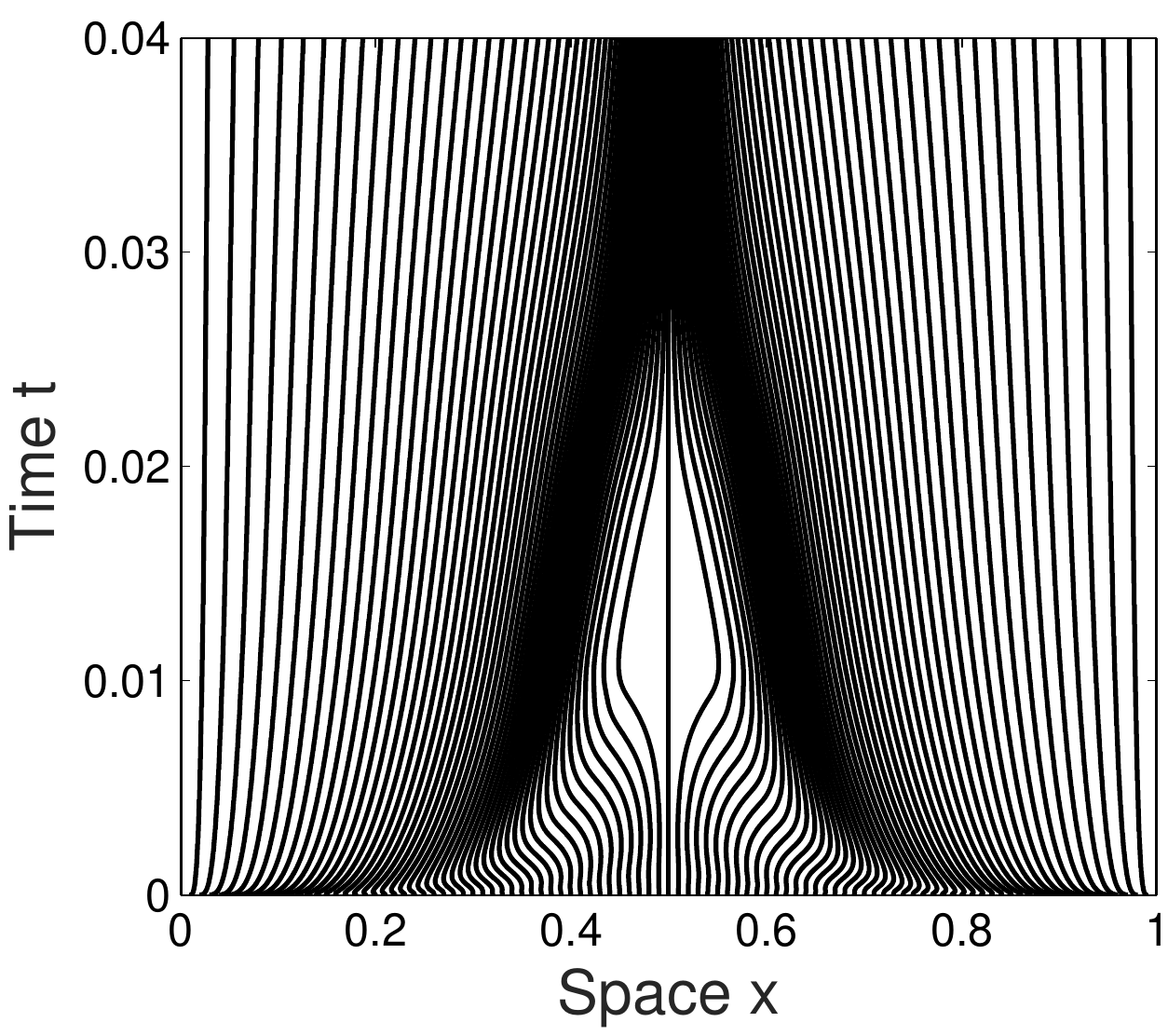}}
   \hfill
   \subfigure{\includegraphics[width=0.45\textwidth]{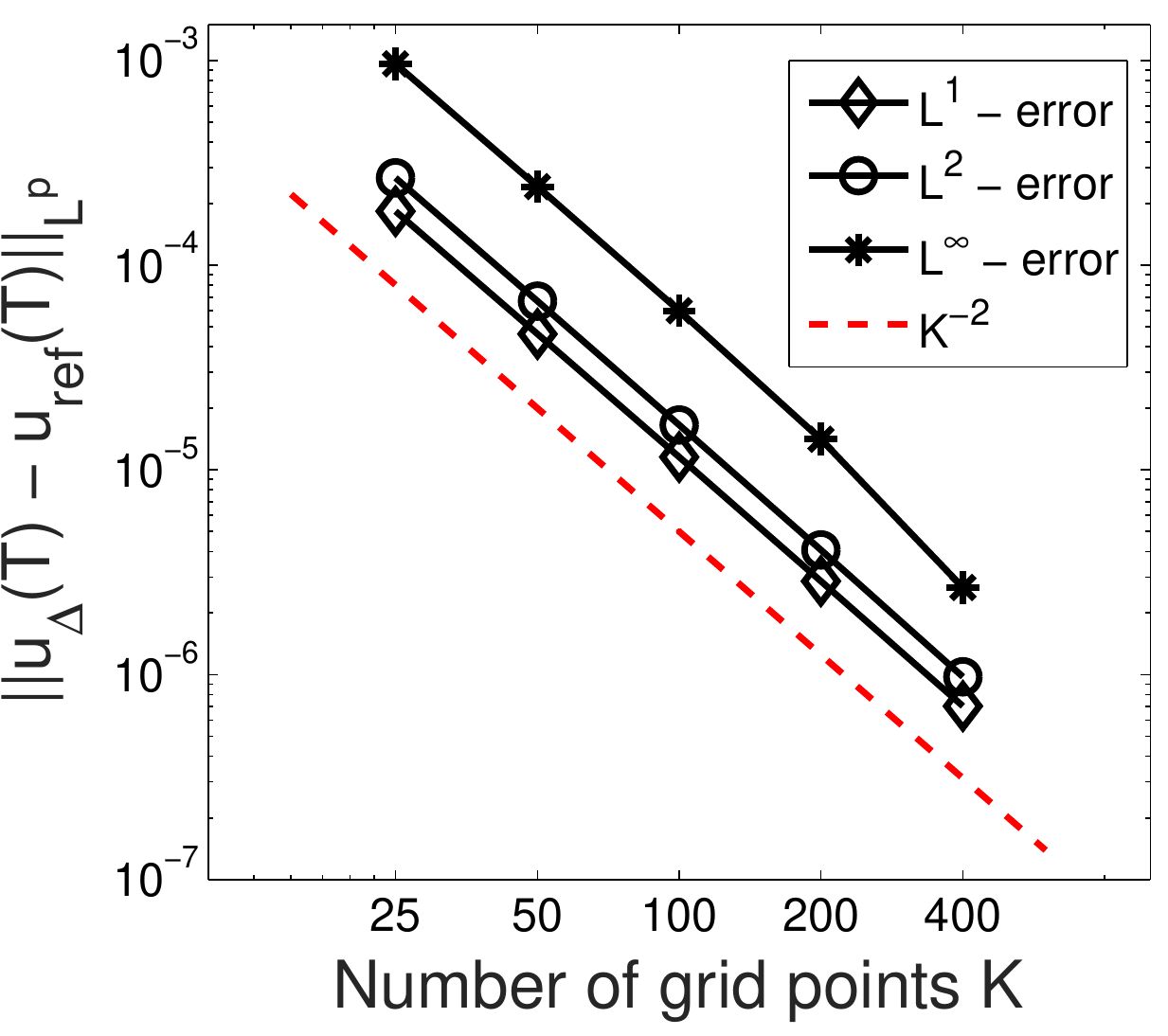}}
   \caption{\emph{Left}: Associated particle flow of $u_\Delta$ for initial datum \eqref{eq:u0}.
     \emph{Right}: Rate of convergence, using $K=25,50,100,200,400$ and $\tau=10^{-7}$. 
     The errors are evaluated at time $t=10^{-4}$.}
   \label{fig:fig2}
\end{figure}

\subsubsection{Rate of convergence}\label{sec:rate}
For the analysis of the scheme's convergence with initial datum $u_\eps^0$ with $\eps=10^{-3}$ from \eqref{eq:u0},
we fix $\tau=10^{-7}$ and calculate solutions $u_\Delta$ to our scheme with $K=25,50,100,200,400$. 
A reference solution $u_{\widetilde{\Delta}}$ is obtained by solving \eqref{eq:nonuniform} 
on a much finer grid, which is $\widetilde{\Delta}=(K_{\operatorname{ref}}^{-1};\tau_{\operatorname{ref}})$
with $K_{\operatorname{ref}}=1600$ and $\tau=5\cdot10^{-8}$. 
In figure \ref{fig:fig2}/right, we plot
the $L^1(\Omega)$, $L^2(\Omega)$, and $L^\infty(\Omega)$-norms of the differences $|u_\Delta(t,\cdot)-u_{\widetilde{\Delta}}(t,\cdot)|$
at time $t=10^{-4}$. 
It is clearly seen that the errors decay with an almost perfect rate of $\delta^2\propto K^{-2}$.

\begin{figure}[t]
  \centering
  \subfigure{\includegraphics[width=0.46\textwidth]{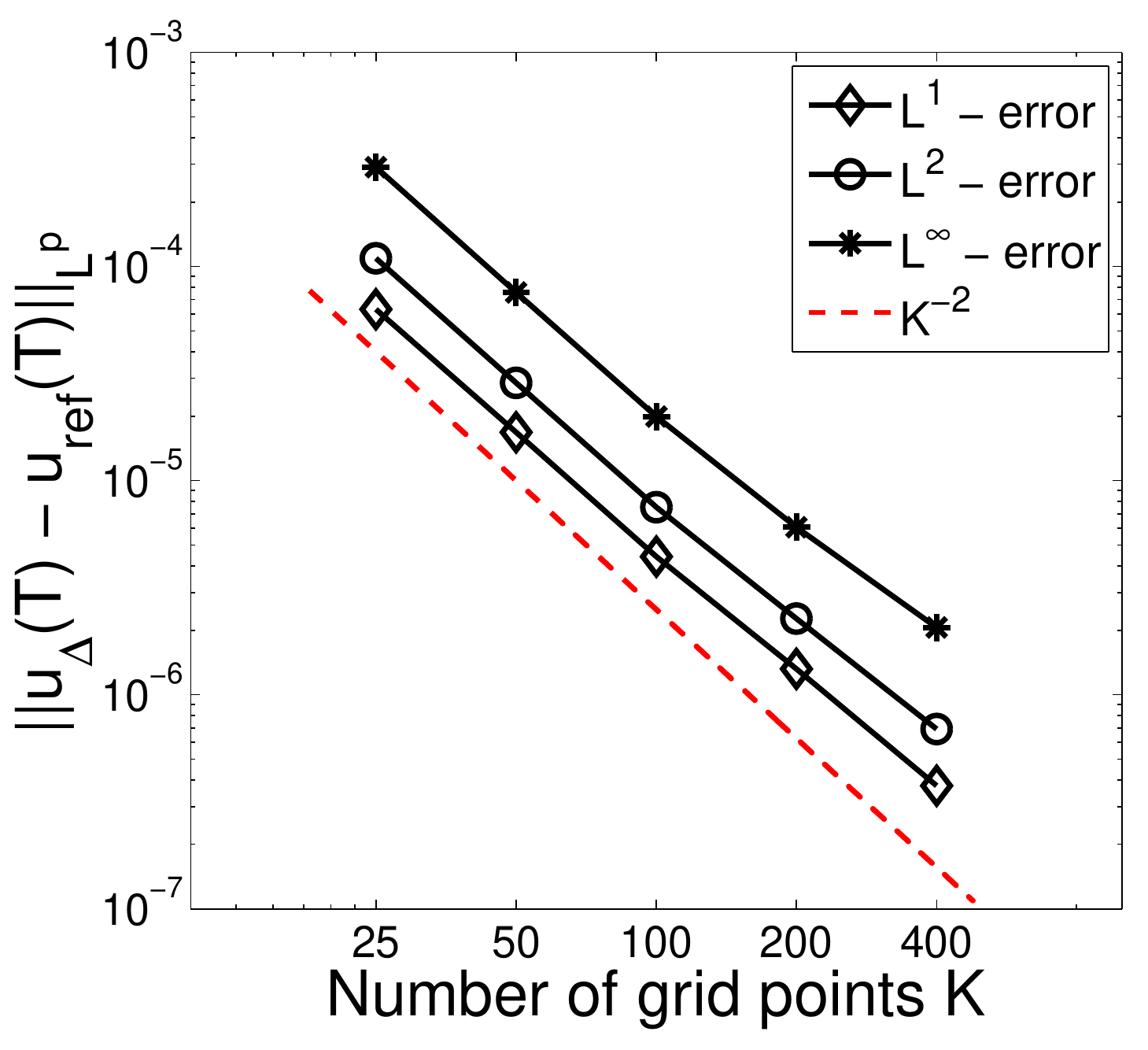}}
   \hfill
   \subfigure{\includegraphics[width=0.46\textwidth]{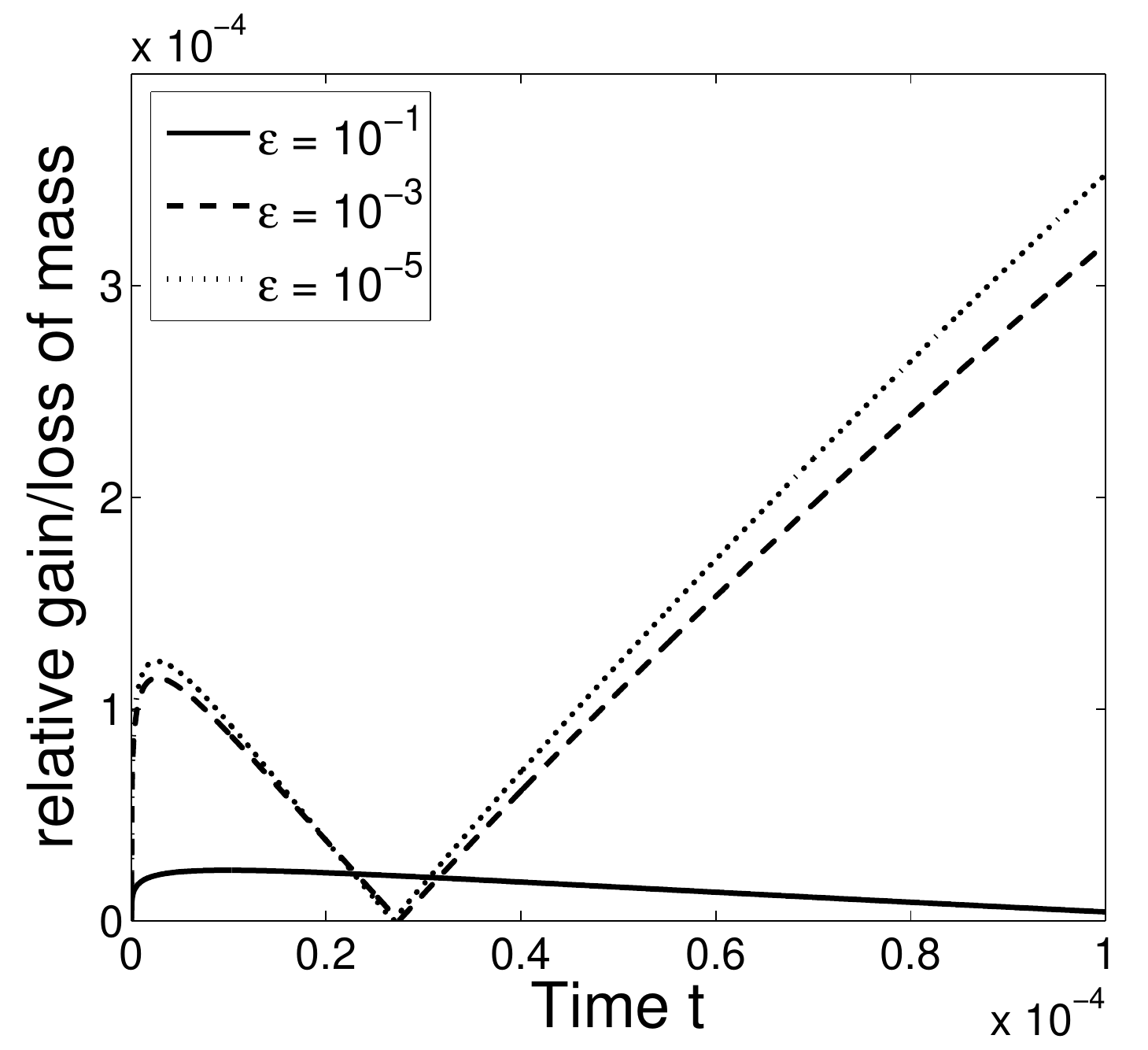}}
   \caption{\emph{Left}: Rate of convergence, using $K=25,50,100,200,400$ and $\tau=10^{-7}$.
     The discrete solutions are compared with a reference solution of the scheme in \eqref{eq:fd_scheme}, and the errors are evaluated at time $t=10^{-4}$.
     \emph{Right}: Loss of mass preservation using a standard finite-difference scheme and $u_\eps^0$ of \eqref{eq:u0} with $\eps=10^{-1},10^{-3}$ and $10^{-5}$.}
   \label{fig:fig3}
\end{figure}

\subsubsection{Comparison with a standard numerical scheme}
For an alternative verification of our scheme's quality, 
we use a reference solution that ist calculated by means of a structurally different discretization of \eqref{eq:thinfilm}.
Specifically, we employ a finite-difference approximation 
with step sizes $\tau_\reff$ and $h_\reff=(b-a)/K_\reff$
in the $t$- and $x$-directions, respectively.
More precisely, with $t_n:=n\tau_\reff$, 
and with $x_k$ for $k=0,\ldots,K_\reff$ being the $K_\reff+1$ equidistant grid points in $[a,b]$,
the numerical approximation $u^n_k\approx u(t_n;x_k)$ of \eqref{eq:thinfilm}
is obtained --- inductively with respect to $n$ --- 
for given vector $u^{n-1}=(u^{n-1}_0,\ldots,u^{n-1}_{K_\reff})$
by solving the fully-implicit difference equation 
\begin{align}
	\label{eq:fd_scheme}
    \frac{u^{n}-u^{n-1}}{\tau_\reff}
    = u^n\cdot\operatorname{D}_\reff^4u^n + \operatorname{D}_\reff^1u^n\cdot\operatorname{D}_\reff^3u^n,
\end{align}
where $u^n=(u^n_0,\ldots,u^n_{K_\reff})$ and 
$\operatorname{D}_\reff^i$ are standard finite difference approximations of the $i$th derivative with equidistant steps $h_\reff$.
The product ``$\cdot$'' of two vectors in \eqref{eq:fd_scheme} shall be understood to act component-by-component.
The boundary conditions \eqref{eq:bc} are enforced using values $u^n_k$ at ``ghost points'' in the obvious way,
that is
\begin{align*}
  u^n_{-1}=u^n_0, \quad u^n_{-2}=u^n_1, \quad u^n_{K_\reff+1}=u^n_{K_\reff},\quad u^n_{K_\reff+2}=u^n_{K_\reff-1}.
\end{align*}
To produce reference solutions for the examples discussed below, 
the scheme above is implemented with $K_\reff=6400$ spatial grid points and a time step $\tau_\reff=5\cdot10^{-8}$.
At a given time $T=N\tau_\reff$, the respective reference profile $x\mapsto u_\reff(T,x)$ 
is defined via piecewise linear interpolation of the respective values $u^n_k$.

Before comparing the advantages and disadvantages of our scheme and the reference scheme in \eqref{eq:fd_scheme}, 
let us repeat the experiment of section \ref{sec:rate}: 
Using $u_\eps^0$ from \eqref{eq:u0} with $\eps=10^{-1}$ instead of $\eps=10^{-3}$, 
we plot
the $L^1(\Omega)$, $L^2(\Omega)$, and $L^\infty(\Omega)$-norms of the differences $|u_\Delta(t,\cdot)-u_\reff(t,\cdot)|$
at time $t=10^{-4}$ in figure \ref{fig:fig3}/left. 
Obviously, the new experiment confirms the rate of convergence $\delta^2\propto K^{-2}$ gained in section \ref{sec:rate}.

The following remarks concerning both schemes can now be made:
\begin{itemize}
\item\emph{Computational cost:} Using Newton's method to solve both approximations \eqref{eq:nonuniform} and \eqref{eq:fd_scheme}, 
			the finite-difference scheme is more efficient, unsurprisingly. 
			The reason for this is the complicate structure of the Jacobian matrix of $\wgrad\DiVz(\xvec)$, 
			whereas the Jacobian matrix of the right hand side of \eqref{eq:fd_scheme} is easy and quick to implement. 
			Numerical experiments show that the finite-difference scheme can be approximately 5-times faster than our scheme,
			using the same values for $K$ and $\tau$.
\item\emph{Conservation of mass:} It is generally known that standard numerical schemes as the finite-difference approximation in \eqref{eq:fd_scheme} 
			do \emph{not} preserve mass. 
			Depending on the initial datum, the loss or gain of mass can decrease quickely with time and causes inaccurate solutions.
			In figure \ref{fig:fig3}/right, we plot the relative change of mass $|\frac1{M}\intom u_\reff(t,x)\dd x - 1|$ with $M=\intom u_\eps^0(x)\dd x$
			for $\eps=10^{-1},10^{-3},10^{-5}$ and $t\in[0,10^{-4}]$. 
			One can observe that the preservation of mass of solutions to the finite-difference scheme is seriously harmed in case of smaller choices of $\eps$.
			This is why we used $\eps=10^{-1}$ in the second experiment for the rate of convergence, 
			since smaller values for $\eps$ produce reference solutions whose change of mass yield to significant distortions 
			of the $L^p$-errors.
\item\emph{Conservation of positivity:} In general, one can expect positivity of the discrete solution 
			to the finite-difference scheme starting with a sufficiently positive initial function. 
			This situation changes dramatically if one considers initial densities with regions of small values or even zero values.
			Take for example the initial datum $u_\eps^0$ in \eqref{eq:u0} with $\eps=0$. 
			Then the solution to the scheme in \eqref{eq:fd_scheme}
			--- again using $K_\reff=3200$ spatial grid points and a time step $\tau_\reff=5\cdot10^{-8}$ ---
			contains negative values after the very first time iteration and finally loses any physical meaning after some more iterations.
			In contrast, our scheme can still handle the case when $\eps=0$ in \eqref{eq:u0}, 
			although one usually has to assume strict positivity for initial values in our approach.
\end{itemize}
Conclusively, our scheme has a major advantage in comparison with standard numerical solvers
if one is interested in a stable and structure-preserving discretization for \eqref{eq:thinfilm}.
Moreover, the slightly plus of the finite-difference scheme and of similiar approximations discussed in the first point --- less computational cost --- 
is invalidated by the fact that one needs much finer discretization parameters
compared to our structure-preserving scheme 
to gain solutions with an adequate physical meaning.


\begin{appendix}
  \section{Appendix}
  \begin{lem}[Gargliardo-Nirenberg inequality]
    \label{lem:GN}
    For each $f\in H^1(\Omega)$, one has that
    \begin{align}
      \label{eq:GN}
      \|f\|_{C^{1/6}(\Omega)} \leq (9/2)^{1/3} \|f\|_{H^1(\Omega)}^{2/3} \|f\|_{L^2(\Omega)}^{1/3}.
    \end{align}
  \end{lem}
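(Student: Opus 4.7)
The plan is to reduce the $C^{1/6}$-estimate to two classical one-dimensional inequalities that are then combined by a simple interpolation. The first ingredient I would prove is the $C^{1/2}$-seminorm bound: for any $x,y\in\Omega$, writing $f(x)-f(y)=\int_y^x f'(s)\dd s$ and applying Cauchy-Schwarz immediately yields $|f(x)-f(y)|\le|x-y|^{1/2}\|f'\|_{L^2(\Omega)}\le|x-y|^{1/2}\|f\|_{H^1(\Omega)}$, so that $[f]_{C^{1/2}}\le\|f\|_{H^1}$.

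The second ingredient is the standard Sobolev-type $L^\infty$-bound $\|f\|_{L^\infty}\le C\|f\|_{L^2}^{1/2}\|f\|_{H^1}^{1/2}$ on the bounded interval $\Omega$. This follows from the pointwise identity $f(x)^2-f(y)^2=2\int_y^x ff'$ and Cauchy-Schwarz, giving $f(x)^2\le f(y)^2+2\|f\|_{L^2}\|f\|_{H^1}$; averaging over $y\in\Omega$ and using $\|f\|_{L^2}\le\|f\|_{H^1}$ then yields a bound of the form $\|f\|_{L^\infty}^2\le C\|f\|_{L^2}\|f\|_{H^1}$. Note in particular that, again because $\|f\|_{L^2}\le\|f\|_{H^1}$, one may rewrite this as $\|f\|_{L^\infty}\le C^{1/2}\|f\|_{L^2}^{1/3}\|f\|_{H^1}^{2/3}$, which is already the required scaling for the $L^\infty$-component of the $C^{1/6}$-norm.

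To obtain the seminorm part, I would interpolate between the two previous estimates using $\frac{|f(x)-f(y)|}{|x-y|^{1/6}}=\bigl(\tfrac{|f(x)-f(y)|}{|x-y|^{1/2}}\bigr)^{1/3}|f(x)-f(y)|^{2/3}\le\|f\|_{H^1}^{1/3}\bigl(2\|f\|_{L^\infty}\bigr)^{2/3}$, and then substitute the $L^\infty$-bound just obtained. Taking the supremum over $x\ne y$ and combining with the $L^\infty$-estimate yields the full inequality for $\|f\|_{C^{1/6}(\Omega)}$.

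No substantive analytical difficulty arises; the argument is an elementary instance of Gagliardo-Nirenberg interpolation in one space dimension. The only slightly delicate point is matching the explicit constant $(9/2)^{1/3}$, which requires an optimized choice of the averaging step for $\|f\|_{L^\infty}$ and a careful handling of the two contributions to the $C^{1/6}$-norm; since this lemma enters Proposition \ref{prp:convergence1_tf} only qualitatively, the precise value of the constant is not essential for the rest of the paper.
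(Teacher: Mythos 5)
Your proof is correct and takes a genuinely different route from the paper. The paper's proof is built on the algebraic trick of estimating $|f(x)^{3/2}-f(y)^{3/2}|$ via the fundamental theorem of calculus combined with a three-factor H\"older inequality, then using $|a-b|\le|a^{3/2}-b^{3/2}|^{2/3}$ for $a,b\ge0$; this requires first restricting to $f\ge0$ and then decomposing a general $f$ into positive and negative parts. Your approach instead interpolates between two elementary ingredients: the $C^{1/2}$-seminorm bound $[f]_{C^{1/2}}\le\|f'\|_{L^2}$ and the $L^\infty$ interpolation bound $\|f\|_{L^\infty}\lesssim\|f\|_{L^2}^{1/2}\|f\|_{H^1}^{1/2}$. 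This is the more conventional Gagliardo--Nirenberg route and avoids the positive/negative decomposition. It also has the advantage of explicitly controlling the $L^\infty$ component of the $C^{1/6}(\Omega)$-norm, which the paper's argument, strictly read, addresses only for the H\"older seminorm. One small remark on your final step: when substituting into the interpolated seminorm bound $[f]_{C^{1/6}}\le\|f\|_{H^1}^{1/3}\big(2\|f\|_{L^\infty}\big)^{2/3}$, you should use the original form $\|f\|_{L^\infty}\lesssim\|f\|_{L^2}^{1/2}\|f\|_{H^1}^{1/2}$, which yields exactly the exponents $\|f\|_{H^1}^{2/3}\|f\|_{L^2}^{1/3}$; substituting the rewritten form with exponents $(1/3,2/3)$ would instead give $\|f\|_{H^1}^{7/9}\|f\|_{L^2}^{2/9}$, which is not the stated estimate. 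That rewritten form is still the right one for the $L^\infty$ component of the norm, as you note. The explicit constant $(9/2)^{1/3}$ is indeed immaterial for the way the lemma is used in Proposition~\ref{prp:convergence1_tf}, and your constant will in general depend on $|\Omega|$ through the $L^\infty$ bound, which is unavoidable.
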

  \begin{proof}
    Assume first that $f\ge0$.  
    Then, for arbitrary $x<y$, the fundamental theorem of calculus and H\"older's inequality imply that
    \begin{align*}
      \big|f(x)^{3/2}-f(y)^{3/2}\big| \le \frac32\int_x^y 1\cdot f(z)^{1/2}|f'(z)|\dd z \le \frac32|x-y|^{1/4}\|f\|_{L^2(\Omega)}^{1/2}\|f'\|_{L^2(\Omega)}.
    \end{align*}
    Since $f\ge0$, we can further estimate
    \begin{align*}
      |f(x)-f(y)| \le \big|f(x)^{3/2}-f(y)^{3/2}\big|^{2/3} \le (3/2)^{2/3}|x-y|^{1/6}\|f\|_{L^2(\Omega)}^{1/3}\|f\|_{H^1(\Omega)}^{1/3}.
    \end{align*}
    This shows \eqref{eq:GN} for non-negative functions $f$.  
    A general $f$ can be written in the form $f=f_+-f_-$, where $f_\pm\ge0$.  
    By the triangle inequality, and since
    $\|f_\pm\|_{H^1(\Omega)}\le\|f\|_{H^1(\Omega)}$,
    \begin{align*}
      \|f\|_{C^{1/6}(\Omega)} \le \|f_+\|_{C^{1/6}(\Omega)}+\|f_-\|_{C^{1/6}(\Omega)} \le 2(3/2)^{2/3}\|f\|_{L^2(\Omega)}^{1/3}\|f\|_{H^1(\Omega)}^{1/3}.
    \end{align*}
    This proves the claim.
  \end{proof}
\begin{lem}
	For each $p\geq1$ and $\xvec\in\xseqN$ with $\zvec=\cz_\thep[\xvec]$, one has that
	\begin{align}
		\label{eq:xpower}
		\sum_{\kappa\in\hval}\left(\frac\delta{z_\kappa}\right)^p =\sum_{\kappa\in\hval}(x_\kappp-x_\kappm)^p \le (x_K-x_0)^p.
	\end{align}
\end{lem}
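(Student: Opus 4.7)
The first equality is immediate from the definition $z_\kappa = \delta/(x_\kappp - x_\kappm)$ in \eqref{eq:zvec}, which gives $\delta/z_\kappa = x_\kappp - x_\kappm$ termwise. So only the inequality requires work.

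For the inequality, I would use two ingredients. First, since the $x_k$ are strictly increasing with $x_0 < x_1 < \cdots < x_K$, the sum telescopes:
\begin{align*}
\sum_{\kappa\in\hval}(x_\kappp-x_\kappm) = \sum_{k=0}^{K-1}(x_{k+1}-x_k) = x_K - x_0.
\end{align*}
Second, I would invoke the elementary fact that for nonnegative reals $a_1,\ldots,a_N$ and any exponent $p\ge1$, one has $\sum_i a_i^p \le \bigl(\sum_i a_i\bigr)^p$. This is proved in one line: set $S=\sum_j a_j$; since $a_i\le S$ and $p-1\ge0$, we have $a_i^{p-1}\le S^{p-1}$, hence $a_i^p = a_i\cdot a_i^{p-1}\le a_i S^{p-1}$, and summation over $i$ gives $\sum_i a_i^p \le S^{p-1}\sum_i a_i = S^p$.

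Applying this inequality with $a_\kappa := x_\kappp - x_\kappm \ge 0$ yields
\begin{align*}
\sum_{\kappa\in\hval}(x_\kappp-x_\kappm)^p \le \Bigl(\sum_{\kappa\in\hval}(x_\kappp-x_\kappm)\Bigr)^p = (x_K-x_0)^p,
\end{align*}
which is the desired bound. There is no real obstacle here; the statement is essentially a concavity/convexity observation combined with telescoping, and the entire argument is a few lines. The only thing to watch is that one must use $p\ge1$ (not $p>0$) for the elementary inequality $\sum a_i^p\le(\sum a_i)^p$, since for $0<p<1$ the reverse direction would hold.
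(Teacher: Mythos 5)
Your proof is correct and follows essentially the same route as the paper: bound $(x_\kappp-x_\kappm)^{p-1}$ by $(x_K-x_0)^{p-1}$ using $p-1\ge0$, then sum and telescope. The only cosmetic difference is that you package the step as a standalone inequality $\sum a_i^p\le(\sum a_i)^p$ before applying it, whereas the paper performs the bound inline; the substance is identical.
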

\begin{proof}
	The first equality is simply the definition \eqref{eq:zvec} of $z_\kappa$.
	Since trivially $x_\kappp-x_\kappm \leq x_K-x_0$ for each $\kappa\in\hval$,
	and since $p-1\geq0$,
	it follows that
	\begin{align*}
		\sum_{\kappa\in\hval}(x_\kappp-x_\kappm)^p\le (x_K-x_0)^{p-1}\sum_{\kappa\in\hval}(x_\kappp-x_\kappm) = (x_K-x_0)^p.
		&\qedhere
	\end{align*}
\end{proof}

\end{appendix}

\bibliography{Horst}
\bibliographystyle{siam}

\end{document}